\documentclass[11pt,a4paper,reqno]{amsart}%
\usepackage[utf8]{inputenc}
\usepackage{enumitem}
\usepackage{amsfonts, verbatim}
\usepackage{amsmath}
\usepackage{amsthm}
\usepackage{amssymb}
\usepackage[pdftex]{hyperref}
\usepackage{mathrsfs}
\usepackage{graphicx}
\setcounter{MaxMatrixCols}{30}
\usepackage[initials]{amsrefs}
\usepackage{color}

\usepackage[foot]{amsaddr}

\newcommand{\R}{\mathbb{R}}

\DeclareMathOperator\dv{div}
\DeclareMathOperator\Ric{Ric}
\numberwithin{equation}{section}
\theoremstyle{plain}
\newtheorem{thm}{Theorem}[section]
\newtheorem{lem}[thm]{Lemma}
\newtheorem{proposition}[thm]{Proposition}
\newtheorem{cor}[thm]{Corollary}
\newtheorem{prop}[thm]{Proposition}
\newtheorem{rem}[thm]{Remark}
\theoremstyle{definition}
\newtheorem{defin}[thm]{Definition}

\newcommand{\dive}{\operatorname{div}}
\newcommand{\Hn}{\mathbb{H}^n}

\author[Bonorino]{Leonardo Bonorino$\null^{\; 1,4}$}
\address{$\null^{1}$Universidade Federal do Rio Grande do Sul, IME, Av. Bento Gon\c{c}alves 9500, Porto Alegre, RS, Brazil 91509-900}
\email{$\null^{4}$bonorino@mat.ufrgs.br}

\author[Casteras]{Jean-Baptiste Casteras$\null^{\; 2,5}$}
\address{$\null^{2}$ D\'epartement de Math\'ematiques, Universit\'e Libre de Bruxelles, CP 214, Boulevard du triomphe, B-1050 Bruxelles, Belgium.}
\email{$\null^{5}$jeanbaptiste.casteras@gmail.com}

\author[Klaser]{Patricia Klaser$\null^{\; 3, 6}$}
\address{$\null^{3}$ Departamento de Matem\'atica, Universidade Federal de Santa Maria, Av. Roraima, 1000 - Camobi,
Santa Maria RS 97105-900, Brazil}
\email{$\null^{6}$patricia.klaser@ufsm.br}

\author[Ripoll]{Jaime Ripoll$\null^{\; 1,3,7}$}
\email{$\null^{7}$jaime.ripoll@ufrgs.br}

\author[Telichevesky]{Miriam Telichevesky$\null^{\; 1,8}$}
\email{$\null^{8}$miriam.telichevesky@ufrgs.br}

\begin{document}
\title[On the asymptotic Dirichlet problem]{On the asymptotic Dirichlet problem for a class of mean curvature type partial differential equations}
\maketitle

\date{\{\}}

\begin{abstract}
We study the Dirichlet problem for the following prescribed mean curvature PDE
$$
\begin{cases}
-\operatorname{div}\dfrac{\nabla v}{\sqrt{1+|\nabla v|^{2}}}=f(x,v)
\text{ in }\Omega\\
v=\varphi \text{ on }\partial\Omega.
\end{cases}
$$
where $\Omega$ is a domain contained in a complete Riemannian manifold $M,$
$f:\Omega\times\mathbb{R\rightarrow R}$ is a fixed function and $\varphi$ is a given continuous function on $\partial\Omega$. This is done in three parts. In the first one we consider this problem in the most general form, proving the existence of solutions when $\Omega$ is a bounded $C^{2,\alpha}$ domain, under suitable conditions on $f$, with no restrictions on $M$ besides completeness. In the second part we study the asymptotic Dirichlet problem when $M$ is the hyperbolic space $\mathbb{H}^n$ and $\Omega$ is the whole space. This part uses in an essential way the geometric structure of $\mathbb{H}^n$ to construct special barriers which resemble the Scherk type solutions of the minimal surface PDE. In the third part one uses these Scherk type graphs to prove the non existence of isolated asymptotic boundary singularities for global solutions of this Dirichlet problem. 

\end{abstract}

\

\noindent {\it Keywords:} Dirichlet problem with prescribed mean curvature, prescribed data on the asymptotic boundary, hyperbolic space, Scherk surfaces

\

\noindent {\it Mathematics Subject Classification:} 35J93, 58J05, 58J32 

\

\section{Introduction}

A natural way of finding bounded entire solutions to a partial differential equation on a Cartan-Hadamard manifold (complete, simply connected Riemannian manifold with nonpositive sectional curvature) is by solving the asymptotic Dirichlet problem with a prescribed  boundary data given at infinity. This problem has been extensively studied for the Laplace equation mostly motivated by the Green-Wu conjecture which asserts the existence of bounded non constant harmonic functions on a Cartan-Hadamard manifold under certain growth and decay conditions on the sectional curvature (see \cite{GW}, \cite{SY}). In the last years the asymptotic Dirichlet  problem has been studied for other partial
differential equations such as the $p-$Laplace (\cite{Ho}) and the minimal
surface equation (\cite{GR}, \cite{CHR}, \cite{CHR2}, \cite{RT}).

In our paper we study the Dirichlet problem for the following prescribed mean curvature PDE
\begin{equation}
\begin{cases}
-\operatorname{div}\dfrac{\nabla v}{\sqrt{1+|\nabla v|^{2}}}=f(x,v)
\text{ in }\Omega\\
v=\varphi \text{ on }\partial\Omega.
\end{cases}
\label{eq-fgraph}
\end{equation}
where $\Omega$ is a domain contained in a complete Riemannian manifold $M,$
$f:\Omega\times\mathbb{R\rightarrow R}$ is a fixed function satisfying some
conditions and $\varphi$ is a given continuous function on $\partial\Omega$.

The objective of this paper is threefold: first, to investigate the existence of solutions of \eqref{eq-fgraph} when $\Omega$ is bounded; second, to study the asymptotic Dirichlet problem in the case where $M$ is the hyperbolic space $\Hn$ and, third, to study the existence or not of isolated asymptotic boundary
singularities for the solutions to the problem discussed in the second step. We make some comments on each of such problems.

Problem \eqref{eq-fgraph} in the case of bounded domains and where  $f$ is a constant or a function depending only on $x$ is a classical topic of study in the Euclidean geometry which, more recently,  has been studied in the Riemannian setting. The theory has now reached a well developped stage and problem \eqref{eq-fgraph} is completely
solved for a large class of PDE's on bounded domains of  complete Riemannian manifolds (see \cite{RT} and references therein for the case where $f=0$ and \cite{dajczer2008killing} for $f$ depending on $x$ and for the mean curvature PDE). Concerning the case where $f$ depends on $x$ and $u,$ a uniqueness result for \eqref{eq-fgraph} has been obtained in \cite{AMP} provided that $\Omega$ is bounded and $f(x,\cdot)$, for a fixed $x\in \Omega$, is nonincreasing. Here, we will provide an existence result for the problem \eqref{eq-fgraph} under the same assumption on $f$ and some geometric assumptions on the domain which are well know to be necessary when dealing with the mean curvature operator.  We shall use the method of a priori estimates for proving the existence of classical solutions. In the case where $f$ depends only on $x$ and for the mean curvature PDE our existence results recover the ones mentioned above.

It is natural to investigate the asymptotic Dirichlet problem once the solvability of \eqref{eq-fgraph} has been established in bounded domains for continuous boundary data. Despite the vast literature on this problem most of the results deal only with homogeneous PDEs of the form $\operatorname{div}(a(|\nabla u|)\nabla u)=0.$  We study here the existence of solutions to the inhomogeneous asymptotic problem \eqref{eq-fgraph} but only in the hyperbolic space. The reason for confining ourselves to this space comes from the construction of barriers at infinity, which are fundamental to prove the continuous extension to the asymptotic boundary of a prospective entire bounded solution. As it is well known from several works, the construction of such barriers is closely related to the existence of Scherk type super and sub solutions to \eqref{eq-fgraph} (see definition \ref{def-Scherkprob}). Due the inhomogeneous part of the PDE \eqref{eq-fgraph}, the geometric structure of the background manifold seems to be  fundamental for the construction of such sub (super) solutions. Indeed, one uses here strongly the symmetries of the hyperbolic geometry to construct such barriers in a quite explicit way (see Secion \ref{sec-STSolutions}). This is the largest part of the paper which, despite being elementary, is more involved. The construction of such barriers requires a decay on $|f(x,u)|$ as $x$ goes to the asymptotic boundary as well as some global assumption on $f(x,u)$ (see \ref{propphi1} and \ref{propphi2} for a precise statement). 
The fact that some sort of decay of $|f|$ at infinity is necessary follows from the geometric nature of the mean curvature operator (see \cite{PigolaRigoliSetti}). Indeed, an application of the tangency principle gives an obstruction to the mean curvature of a solution by comparing the mean curvature of its graph with the mean curvature of geodesic spheres. Let us point out that existence results have been obtained in \cite{CHHfmin} for a related equation on more general Cartan-Hadamard manifolds with a different method not using Scherk type solutions. 

The existence or not of interior or boundary singularities for the minimal surface equation in the Euclidean space is a classical topic of study. In \cite{BR} the authors extended this study to the asymptotic Dirichlet problem  in a Riemanian manifold and to a large class or partial differential equations which includes the $p-$Laplace and the minimal surface equation. In particular, they prove that isolated singularities at the asymptotic boundary of solutions of the minimal surface equation are removable. We here obtain the same result to the inhomogeneous PDE \eqref{eq-fgraph} in the hyperbolic space.
 
We state precisely our main results. Let us begin with our existence results on bounded domains, proved in Section \ref{sec-genExist}.

\begin{thm}
\label{thm-existence}
Let $\Omega \subset M$ be a bounded $C^{2,\alpha}$ domain in a complete Riemannian manifold $M$ and let $f\in C^{1}\bigl(\overline{\Omega}\times \mathbb{R}\bigr)$ be given. 
Suppose there is a constant $F$ such that
\begin{equation}
|f(x,t)|\leq F, \text{ and } f_t(x,t)\le 0 \text{ for all }\left(x,t\right)  \in\overline{\Omega}\times\mathbb{R}
\label{eq-f-form}
\end{equation}
and
\begin{equation}
\operatorname*{Ric}\nolimits_{\Omega}\geq-\dfrac{F^{2}}{n-1},\quad
H_{\partial\Omega}\geq F, \label{curvboundary}
\end{equation}
where
$\operatorname*{Ric}\nolimits_{\Omega}$ stands for the Ricci curvature of
$\Omega$ and $H_{\partial\Omega}$ for the inward mean curvature of
$\partial\Omega$. 
Then the Dirichlet problem \eqref{eq-fgraph}
is solvable
for all $\varphi\in C^{0}(\partial\Omega)$. If $\varphi\in C^{2,\alpha}(\overline
{\Omega}),$ then the solution is also in $C^{2,\alpha}(\overline{\Omega}).$
\end{thm}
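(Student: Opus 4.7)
The plan is a classical continuity method, reducing existence to uniform a priori estimates for the family
$$-\operatorname{div}\frac{\nabla v_{\tau}}{\sqrt{1+|\nabla v_{\tau}|^{2}}} \;=\; \tau f(x,v_{\tau}) \text{ in } \Omega, \qquad v_{\tau}=\tau\varphi \text{ on } \partial\Omega, \qquad \tau\in[0,1].$$
At $\tau=0$ the unique solution is $v_{0}\equiv 0$, and the sign condition $f_{t}\leq 0$ makes the linearized operator coercive, which yields uniqueness at each step and openness via the implicit function theorem. Closedness reduces to a uniform $\|v_{\tau}\|_{C^{1,\beta}(\overline{\Omega})}$ bound, after which De Giorgi--Nash and Schauder theory bootstrap to $C^{2,\alpha}(\overline{\Omega})$. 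I would first prove existence for $\varphi\in C^{2,\alpha}(\overline{\Omega})$ and treat the continuous case at the end by approximation.

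The uniform estimates split into three classical pieces. \emph{Global sup-norm.} I would build a supersolution of the form $\bar{v}(x)=\sup\varphi+g(d(x,\partial\Omega))$ on a tubular neighborhood of $\partial\Omega$, with $g$ concave and $g(0)=0$; via the Laplacian/mean-curvature comparison theorem, the sharp constants $\operatorname{Ric}\geq -F^{2}/(n-1)$ and $H_{\partial\Omega}\geq F$ yield $-\operatorname{div}(\nabla\bar{v}/\sqrt{1+|\nabla\bar{v}|^{2}})\geq F\geq f(x,\bar{v})$, and the comparison principle (which uses $f_{t}\leq 0$) gives $v_{\tau}\leq\bar{v}$; a symmetric construction bounds $v_{\tau}$ from below. \emph{Boundary gradient estimate.} Serrin-type local barriers at each point of $\partial\Omega$, using again $H_{\partial\Omega}\geq F\geq |f|$, produce a bound on $\sup_{\partial\Omega}|\nabla v_{\tau}|$ in terms of the previous sup-norm estimate and $\|\varphi\|_{C^{2}}$. \emph{Interior gradient estimate.} A Korevaar--Spruck maximum-principle argument applied to an auxiliary function of the form $|\nabla v_{\tau}|e^{h(v_{\tau})}$ gives an interior bound: the hypothesis $f_{t}\leq 0$ kills the bad zero-order term after differentiating the equation, while the Ricci lower bound is exactly what is needed to absorb the negative curvature contribution from the Bochner identity along the graph.

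The hardest step is the interior gradient estimate, since the degeneracy of the mean-curvature operator forces one to exploit both structural hypotheses ($f_{t}\leq 0$ and the sharp Ricci bound) inside a single auxiliary-function calculation; any slackness in either bound breaks the maximum principle. Granted this, the three estimates are uniform in $\tau$, so Leray--Schauder provides a $C^{2,\alpha}(\overline{\Omega})$ solution whenever $\varphi\in C^{2,\alpha}(\overline{\Omega})$. For merely continuous $\varphi$, I approximate uniformly by $\varphi_{k}\in C^{2,\alpha}(\overline{\Omega})$; the associated solutions $v_{k}$ enjoy a uniform global sup-norm and interior $C^{2,\alpha}$ bounds on compacta, and the barriers built in the sup-norm and boundary-gradient steps transfer to give equicontinuity up to $\partial\Omega$. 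A diagonal subsequence then converges locally in $C^{2}$ to a classical solution $v\in C^{2,\alpha}(\Omega)\cap C^{0}(\overline{\Omega})$ with $v|_{\partial\Omega}=\varphi$, completing the proof.
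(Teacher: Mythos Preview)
Your proposal is correct and follows the same strategy as the paper: Leray--Schauder on the family $(\tau f,\tau\varphi)$, height and boundary-gradient estimates via barriers (the paper simply invokes \cite{dajczer2008killing} to obtain global super/subsolutions $w^\pm$ with $Q(w^\pm)=\pm F$ and $w^\pm|_{\partial\Omega}=\varphi$, which is what your distance-function construction builds toward), a Korevaar-type interior gradient estimate using the test function $e^{ku}|\nabla u|$ where $f_t\le 0$ kills the bad term, and approximation for $C^0$ data. The only point to tighten is that your sup-norm barrier must be global on $\Omega$ rather than defined only on a tubular neighborhood of $\partial\Omega$; this is exactly the content of the cited CMC existence result, and the sharp Ricci bound is needed there rather than in the interior gradient step (where any lower bound on Ricci suffices).
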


We observe that Theorem \ref{thm-existence} extends \cite[Theorem 1]{dajczer2008killing} to the case where the function $f$ depends also on $u$.

In Section \ref{sec-STSolutions}, we construct Scherk type sub and super solutions (see Definition \ref{def-Scherkprob} and Theorem \ref{s}) which are used to prove the following results:

\begin{thm}\label{thm-Scherk}
Suppose that $f\in C^{1}(\mathbb{H}^n\times\mathbb{R})$ satisfies
$f_t(x,t)\le 0$  in $\Hn\times\mathbb{R}$ and condition  \ref{propphi1} for a function $\phi(r)\leq (n-1)\coth(r).$ Then the asymptotic
Dirichlet problem 
\begin{equation}
\begin{cases}\label{eq-asymDP}
-\operatorname{div}\dfrac{\nabla v}{\sqrt{1+|\nabla v|^{2}}}=f(x,v)
\text{ in }\mathbb{H}^n\\
v=\varphi \text{ on }\partial_{\infty}\mathbb{H}^n
\end{cases}
\end{equation}
is solvable for any $\varphi\in C^{0}(\partial_{\infty}\mathbb{H}^n)$. Moreover, 
the assumption $\phi(r)\leq (n-1)\coth(r)$ can be replaced by condition \ref{propphi2} on $f$. 
\end{thm}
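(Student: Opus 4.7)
The plan is the classical exhaustion-plus-barriers strategy. Extend $\varphi$ to a continuous function $\tilde\varphi$ on the geodesic compactification $\Hn\cup\partial_\infty\Hn$, and for each large $R$ solve the bounded Dirichlet problem on the geodesic ball $B_R = B_R(o)$ with boundary data $\tilde\varphi|_{\partial B_R}$ to obtain $u_R\in C^{2,\alpha}(\overline{B_R})$. In $\Hn$ one has $\operatorname{Ric}\equiv-(n-1)$ and $H_{\partial B_R}=(n-1)\coth R>n-1$, and the Scherk-type barriers of Section \ref{sec-STSolutions} provide a uniform a priori $L^\infty$ bound on any such $u_R$; after truncating $f$ smoothly in $t$ outside this bound (preserving $f_t\le 0$), Theorem \ref{thm-existence} applies on each $B_R$.

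To pass to the limit, observe that the Scherk-type sub- and super-solutions $w^\pm$ constructed in Theorem \ref{s} from hypothesis \ref{propphi1} or \ref{propphi2} dominate $\tilde\varphi$ from below and above on $\partial_\infty\Hn$ and satisfy the appropriate PDE inequalities; using the monotonicity $f_t\le 0$, the comparison principle for the prescribed mean curvature operator yields $w^-\le u_R\le w^+$ on $\overline{B_R}$, which is the claimed $L^\infty$ bound. Standard local gradient and Schauder estimates for the quasilinear mean curvature equation with bounded right-hand side then give uniform $C^{2,\alpha}$ bounds on every compact subset of $\Hn$, and a diagonal Arzel\`a--Ascoli argument produces a subsequence $u_{R_k}\to u$ in $C^{2}_{\mathrm{loc}}(\Hn)$, with $u$ a classical entire solution of \eqref{eq-asymDP}.

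It remains to verify $u|_{\partial_\infty\Hn}=\varphi$. Fix $\xi\in\partial_\infty\Hn$ and $\epsilon>0$. Using continuity of $\varphi$ and the localized Scherk-type barriers of Theorem \ref{s}, one constructs on a neighborhood $U$ of $\xi$ in $\Hn$ a super-solution $W^+$ satisfying $W^+\ge\tilde\varphi$ on the finite part of $\partial U$ and $W^+(x)\to\varphi(\xi)+\epsilon$ as $x\to\xi$, together with an analogous sub-solution $W^-$. Comparison gives $W^-\le u_R\le W^+$ on $U\cap B_R$; passing $R\to\infty$ yields $W^-\le u\le W^+$ on $U$, and taking $x\to\xi$ we obtain $\varphi(\xi)-\epsilon\le\liminf_{x\to\xi}u(x)\le\limsup_{x\to\xi}u(x)\le\varphi(\xi)+\epsilon$. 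Since $\epsilon$ is arbitrary, $u(x)\to\varphi(\xi)$. The main obstacle is precisely the construction of these Scherk-type barriers in the inhomogeneous setting: this is exactly where the decay conditions \ref{propphi1}--\ref{propphi2} are needed, and it is the technical core carried out in Section \ref{sec-STSolutions}. Once those barriers are in hand, the assembly above is routine.
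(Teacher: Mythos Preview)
Your exhaustion-by-balls approach is a valid alternative to the paper's Perron method, but the write-up has two genuine gaps. First, the claim that ``Theorem \ref{thm-existence} applies on each $B_R$'' fails as stated: that theorem requires a constant $F$ with $|f|\le F$ on $\overline{B_R}\times\R$ and $H_{\partial B_R}\ge F$, but $H_{\partial B_R}=(n-1)\coth R\to n-1$ as $R\to\infty$ while $\sup|f|$ may be as large as $\phi(0)$, which is not assumed to be $\le n-1$. Truncating $f$ in $t$ does not help, since the obstruction comes from the $x$-dependence near the origin. The paper's Perron method avoids this issue because it only needs solvability on \emph{small} balls (for the Perron lift), where $H_{\partial B_\epsilon}=(n-1)\coth\epsilon$ can be taken arbitrarily large and Theorem \ref{thm-existence} applies without difficulty.

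Second, the Scherk-type barriers of Theorem \ref{s} require \emph{both} conditions \ref{propphi1} and \ref{propphi2}, not ``\ref{propphi1} or \ref{propphi2}'' as you write. In the case where only \ref{propphi1} holds together with $\phi(r)\le(n-1)\coth r$ (and \ref{propphi2} is not assumed), Theorem \ref{s} is unavailable for producing either your global $L^\infty$ bound or your local boundary barriers. The paper handles this case by a separate argument you have omitted: it constructs a radial supersolution $v(r)$ solving $Q(v\circ r)=\phi(r)$ (the hypothesis $\phi\le(n-1)\coth r$ is exactly what ensures $|\tilde\rho|<1$ so that $v$ is well-defined and finite), obtains the a priori height bound $v(0)$, and then modifies $f$ for $t>v(0)$ so that the modified $\hat f$ satisfies \ref{propphi2}, thereby reducing to the case already treated via Perron and Scherk barriers.
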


Let us observe that, even using Perron's method, we are not able to prove Theorem \ref{thm-Scherk} only assuming some asymptotic decay condition on $f$ i.e. condition \ref{propphi1}. Beyond $f_t \leq 0$, we also need some global assumptions on $f$, precisely, we require either that $|f(x,t)|\leq (n-1)\coth (r(x))$, for all $t\in \R$ and  $x\in \mathbb{H}^n$, or condition \ref{propphi2}. Notice that these assumptions guarantee the solvability of some Dirichlet problem on balls $B_R (o)$.
To see their importance, first observe that for $H > n-1$ there are hemispheres of mean curvature $H$ which are graphs of functions $u: B_R(o)\to \R$ with infinite gradient at the boundary. Consider the case $f(x,t) = H$ for $r(x) < R$ and $t \in \mathbb{R}$. Then $f(x,t)$ does not satisfy either $|f(x,t)|\leq (n-1)\coth (r(x))$ or condition \ref{propphi2}, even if condition \ref{propphi1} and $f_t \le 0$ hold. We can prove using a comparison argument with these hemispheres that equation $Q(v)=f(x,v)$ has no solution in any domain containing $B_R(o)$.

\

We conclude this paper by generalizing partially Theorem 1.3 of \cite{BR}:

\begin{thm}
Suppose that $f\in C^{1}(\mathbb{H}^n\times\mathbb{R})$ satisfies $f_t(x,t)\le 0$  in $\Hn\times\mathbb{R}$, \ref{propphi1} and \ref{propphi2}. Let $p_1, \dots, p_k \in\partial_{\infty}\mathbb{H}^n$ and $\varphi \in C^0(\partial_\infty \Hn)$ be given.  If $u \in C^2(\mathbb{H}^n)\cap C^0(\overline{\mathbb{H}^n} \backslash \{p_1, \dots, p_k \})$ is a solution to the Dirichlet problem
\begin{equation}
\begin{cases}
-\operatorname{div}\dfrac{\nabla v}{\sqrt{1+|\nabla v|^{2}}}=f(x,v)
\text{ in }\mathbb{H}^n\\
v=\varphi \text{ on }\partial_{\infty}\mathbb{H}^n\backslash\{p_1,\,\dots,\, p_k\},
\end{cases}
\end{equation}
 then $u \in C^0\bigl(\overline{\mathbb{H}^n}\bigr)$.
\label{ContinuityOfSolutionWithSing}
\end{thm}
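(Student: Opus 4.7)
The plan is to solve the full asymptotic Dirichlet problem with the prescribed data $\varphi$ on all of $\partial_\infty\Hn$, obtaining $w\in C^0(\overline{\Hn})$ from Theorem \ref{thm-Scherk}, and then show $u\equiv w$. Writing $h=u-w$, the assumption $f_t\leq 0$ combined with the mean value theorem shows that $h$ satisfies a linear elliptic equation of the form $-\operatorname{div}(A\nabla h)+c\,h=0$ with $A$ uniformly elliptic on compact sets and $c\geq 0$, so the maximum principle applies to $h$. Since $h=0$ on $\partial_\infty\Hn\setminus\{p_1,\ldots,p_k\}$, it remains to prove that $h$ is bounded on $\Hn$ and that $\lim_{x\to p_i}h(x)=0$ for each $i$.

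I would localize at each $p_i$ in the Poincar\'e ball model. For small $\varepsilon>0$ and large $R$, let $H_R\subset\Hn$ be the half-space bounded by a totally geodesic hyperplane $\Pi_R$ whose asymptotic boundary is a spherical cap $C_R$ about $p_i$, with $C_R$ shrinking to $\{p_i\}$ as $R\to\infty$. By continuity of $\varphi$ at $p_i$, for $R$ large we may assume $|\varphi-\varphi(p_i)|<\varepsilon/2$ on $C_R$. Using the Scherk-type construction of Section \ref{sec-STSolutions} (see Theorem \ref{s}), I would produce a supersolution $s^+_R$ of the PDE on $H_R$ equal to $\varphi(p_i)+\varepsilon$ on $C_R$ and to a large constant $M\geq\sup_{\Hn\setminus U}u$ on $\Pi_R$, where $U$ is a suitable small neighborhood of $p_i$; the construction relies essentially on the hyperbolic symmetries and on the decay of $f$ encoded in conditions \ref{propphi1} and \ref{propphi2}. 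A symmetric construction yields a subsolution $s^-_R$.

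Comparison on $H_R$, applied via exhaustion by bounded $C^{2,\alpha}$ subdomains together with Theorem \ref{thm-existence} and $f_t\leq 0$, gives $s^-_R\leq u\leq s^+_R$ on $H_R$. Letting $R\to\infty$ the caps $C_R$ collapse to $p_i$, so $\limsup_{x\to p_i}u(x)\leq\varphi(p_i)+\varepsilon$ and $\liminf_{x\to p_i}u(x)\geq\varphi(p_i)-\varepsilon$. Since $\varepsilon>0$ is arbitrary and $w(p_i)=\varphi(p_i)$, this gives $h(x)\to 0$ as $x\to p_i$; boundedness of $u$ (and hence of $h$) on $\Hn$ follows from the same barriers combined with the continuity of $u$ outside $\{p_1,\ldots,p_k\}$. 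The maximum principle applied to $h$ then forces $h\equiv 0$, yielding $u\equiv w\in C^0(\overline{\Hn})$.

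The main obstacle is the construction of the Scherk-type barriers $s^\pm_R$ having the prescribed asymptotic values $\varphi(p_i)\pm\varepsilon$ at $p_i$ while remaining comparable to large constants on the finite boundary piece $\Pi_R$. Because the PDE is inhomogeneous, vertical translations of a Scherk-type solution are no longer solutions, so one cannot simply translate; instead one uses the monotonicity $f_t\leq 0$ to treat shifted solutions as super/subsolutions and to glue them with the barriers built in Section \ref{sec-STSolutions}. This delicate technical adaptation is the main extension of the arguments of \cite{BR} from the homogeneous minimal surface setting to the present inhomogeneous mean curvature equation.
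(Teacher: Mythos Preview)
Your barrier comparison on $H_R$ is circular. You want $u\le s^+_R$ on the half-space $H_R$ whose asymptotic boundary $C_R$ contains the singular point $p_i$. Any comparison principle on such a domain (e.g.\ Lemma~\ref{comparison-principle-for-unbounded}) requires $\limsup_{x\to q}u(x)\le\liminf_{x\to q}s^+_R(x)$ at every $q\in\partial H_R\cup\partial_\infty H_R$, in particular at $q=p_i$, where $s^+_R\to\varphi(p_i)+\varepsilon$. But the inequality $\limsup_{x\to p_i}u(x)\le\varphi(p_i)+\varepsilon$ is exactly what you are trying to prove. Exhausting $H_R$ by bounded $C^{2,\alpha}$ subdomains does not help: on the interior boundaries of those subdomains you have no relation between $u$ and $s^+_R$. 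The same obstruction undercuts your remark that boundedness of $u$ ``follows from the same barriers'': to run the comparison on $H_R$ at all you already need $u$ bounded above near $p_i$, and a priori $u$ could blow up there.

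The paper avoids this circularity in two separate steps. For boundedness (Proposition~\ref{boundednessOfSolutionWithSing}) it compares on regions $Y_m\cap B_i$ whose closure \emph{excludes} $p_i$, using Scherk supersolutions $w_{S_m,c_1}$ with hyperspheres $S_m\to p_i$; the crucial input is the uniform bound of Lemma~\ref{AuxiliarLemmaForUniformBoundedness2}, which is independent of $S_m$ and allows one to let $Y_m\uparrow\Hn$. For the sharp limit $u(x)\to\varphi(p_i)$ no barrier argument suffices; instead the paper translates $u$ by hyperbolic isometries $T_j$ fixing $p_i$, uses the interior gradient estimate (Proposition~\ref{lem-gradint}) to extract a local $C^2$ limit $w$ of $u_j=u\circ T_j^{-1}$, and observes via condition~\ref{propphi1} that $f(T_j^{-1}(\cdot),\cdot)\to0$, so $w$ is a \emph{minimal} graph which attains an interior maximum---contradicting the strong maximum principle. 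This compactness/blow-down step is the essential idea you are missing; it cannot be replaced by a direct Scherk comparison precisely because any half-space with $p_i$ in its asymptotic closure suffers from the circularity above. Finally, your outer framework (setting $h=u-w$ and invoking a global maximum principle on $\Hn$) is superfluous: once $\lim_{x\to p_i}u(x)=\varphi(p_i)$ is shown for each $i$, the conclusion $u\in C^0(\overline{\Hn})$ is immediate, and there is no need to prove $u\equiv w$.
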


\section{A general existence theorem in bounded domains of Riemannian manifolds}\label{sec-genExist}

In this section we prove Theorem \ref{thm-existence}. As it is well-known from the theory of second order quasi linear elliptic PDE (see \cite{GilTru}) Theorem \ref{thm-existence} will follow once we get a priori height and gradient estimates for solutions of \eqref{eq-fgraph}.

From \cite[Theorem 1]{dajczer2008killing} given $\varphi \in C^{2,\alpha}(\partial\Omega),$ under the hypothesis of Theorem \ref{thm-existence}, there are functions $w^+, w^-\in C^{2,\alpha}(\overline{\Omega})$ such that $Q(w^+)=F,$ $Q(w^-)=-F$  in $\Omega$ and $w^+, w^-$ are equal to $\varphi$ on $\partial\Omega$, where \begin{equation}\label{def-opQ}
Q(v)=-\dive\left(\dfrac{\nabla v}{\sqrt{1+|\nabla v|^{2}}}\right).
\end{equation}

Hence if $u\in C^1(\overline\Omega)$  is a solution to \eqref{eq-fgraph}, it holds that $w^-\leq u \leq w^+$ in $\Omega$ and these  functions coincide on $\partial \Omega.$ Therefore,
taking $$C=\max\{\sup_\Omega |w^+|,\sup_\Omega |w^-|, \sup_{\partial\Omega} |\nabla w^+|, \sup_{\partial\Omega} |w^-|\}= C(\varphi, \Omega, F),$$ we have 

\begin{lem}
\label{lem-altEgradbordo} Suppose that $f\in C^{1}\bigl(\overline{\Omega}\times \mathbb{R}\bigr)$ satisfies \eqref{eq-f-form} and that
\eqref{curvboundary} holds. Let $u \in C^{2}(\Omega)\cap C^1(\overline{\Omega})$  be a
solution of \eqref{eq-fgraph}. Then, there exists a constant
\[
C= C(\varphi, \Omega, F)\]
such that
\[\sup_{\Omega} |u| \le C \text{ and }
\sup_{\partial\Omega} |\nabla u| \le C.
\]
\end{lem}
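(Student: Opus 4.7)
The plan is essentially the one foreshadowed in the paragraph preceding the statement: produce the Dajczer--Hinojosa--Lira barriers $w^{\pm}$ with $Q(w^{\pm})=\pm F$ and $w^{\pm}|_{\partial\Omega}=\varphi$, sandwich $u$ between them, and read off both estimates. First I would invoke \cite[Theorem 1]{dajczer2008killing}, whose hypotheses coincide with \eqref{curvboundary}, to obtain $w^{\pm}\in C^{2,\alpha}(\overline{\Omega})$ with the properties above (after approximating $\varphi$ by $C^{2,\alpha}$ boundary data if necessary; the allowed dependence of $C$ on $\varphi$ absorbs this).

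Next, because $|f|\le F$, the barriers and the candidate solution satisfy
\[
Q(w^{-})=-F\;\le\; f(x,u)= Q(u)\;\le\; F= Q(w^{+})\qquad\text{in }\Omega,
\]
and all three functions agree on $\partial\Omega$. Applying the comparison principle for the mean curvature operator $Q$ (note that the monotonicity $f_t\le 0$ is not needed at this step, since $f$ has been replaced by the $u$-free bound $\pm F$) yields the sandwich
\[
w^{-}\le u\le w^{+}\qquad\text{in }\overline{\Omega},
\]
from which the height bound $\sup_{\Omega}|u|\le \max(\sup_\Omega|w^{+}|,\sup_\Omega|w^{-}|)$ is immediate.

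For the boundary gradient bound I would argue pointwise. Fix $p\in\partial\Omega$. Since $u=\varphi=w^{\pm}$ on $\partial\Omega$, the tangential components of $\nabla u(p)$ coincide with those of $\nabla w^{\pm}(p)$. For the inward normal component, the sandwich together with equality at $p$ says that $p$ is simultaneously a maximum of $u-w^{+}$ and a minimum of $u-w^{-}$ on $\overline{\Omega}$; comparing one-sided difference quotients along the inward normal $\nu$ gives
\[
\partial_\nu w^{-}(p)\;\le\;\partial_\nu u(p)\;\le\;\partial_\nu w^{+}(p).
\]
Combining the tangential and normal bounds yields $|\nabla u(p)|\le \max(|\nabla w^{+}(p)|,|\nabla w^{-}(p)|)$, and choosing $C$ to be the maximum of the four quantities displayed in the text preceding the lemma completes the proof. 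The only non-routine ingredient is the comparison principle for $Q$, but that is the standard one for quasilinear divergence-form operators of mean curvature type (reduction to a linear uniformly elliptic inequality for the difference, via the fundamental theorem of calculus on $\lambda\mapsto\nabla\cdot a(\nabla(\lambda w^{+}+(1-\lambda)u))$), so I do not anticipate any obstacle beyond bookkeeping.
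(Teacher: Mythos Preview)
Your proposal is correct and follows essentially the same approach as the paper: invoke \cite[Theorem 1]{dajczer2008killing} to produce the barriers $w^{\pm}$ with $Q(w^{\pm})=\pm F$ and $w^{\pm}|_{\partial\Omega}=\varphi$, sandwich $u$ between them via the comparison principle, and read off both the height and boundary gradient estimates from the resulting inequality $w^{-}\le u\le w^{+}$ together with equality on $\partial\Omega$. The paper's argument is the paragraph immediately preceding the lemma, and your write-up simply spells out the boundary gradient step (tangential versus normal decomposition) in slightly more detail than the paper does.
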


We also need local and global gradient estimates as stated below.

\begin{proposition}
Consider problem \eqref{eq-fgraph} in $\Omega \subset M$, a bounded $C^{2,\alpha}$ domain. Suppose that $f\in C^1(\overline{\Omega} \times \mathbb{R})$  is a  function for which there is a constant $A,$ such that 
\begin{equation}
 |f(x,t)| \le A, \quad |f_x(x,t)| \le A  \text{ and }  f_t(x,t) \le 0
\label{boundOf-f-form}
\end{equation}
for any $(x,t) \in \overline{\Omega} \times \mathbb{R}$. Let $u\in C^3(\Omega)$ be a solution of \eqref{eq-fgraph}. 
\begin{enumerate}
\item[(a)] If $u \in C^1(\overline{\Omega})$, then there is $L=L(\max_\Omega u, \max_{\partial \Omega}|\nabla u|, A,\Ric_{\Omega} ) > 0$ such that, for any $x \in \Omega,$
$$ |\nabla u(x)| \le L.$$
\item[(b)] For any normal geodesic ball $B_R(x_0) \subset \subset \Omega$, there exists \\ $L=L(\max_\Omega u, A, \Ric_{\Omega}, R) > 0$ such that
$$ |\nabla u(x_0)| \le L.$$
\label{lem-gradint}
\end{enumerate}
\end{proposition}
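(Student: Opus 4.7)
The plan is to adapt the classical Korevaar--Simon / Dajczer--Hinojosa--Lira maximum-principle argument for mean-curvature-type equations to the inhomogeneous setting $Q(u)=f(x,u)$, treating (a) and (b) simultaneously via a common auxiliary function with an appropriate cutoff for (b).

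Writing $\mathcal{V}=\sqrt{1+|\nabla u|^{2}}$, the PDE becomes
$$L u := a^{ij}\nabla_i\nabla_j u = -\mathcal{V}\,f(x,u), \qquad a^{ij}=g^{ij}-\frac{u^{i}u^{j}}{\mathcal{V}^{2}},$$
a linear elliptic operator (for $u$ fixed). Differentiating the PDE in the direction of $\nabla u$ and invoking Bochner's formula to commute $\nabla$ with the ambient Laplacian, one derives a Jacobi/Simons-type inequality of the form
$$L\mathcal{V} \;\ge\; \frac{|\nabla^{\Sigma}\mathcal{V}|^{2}}{\mathcal{V}}\;-\;C\bigl(A,\Ric_{\Omega}\bigr)\,\mathcal{V} \;-\; \frac{f_{u}\,|\nabla u|^{2}}{\mathcal{V}},$$
where the first term on the right is the ``good'' gradient contribution from the geometry of the graph $\Sigma\subset M\times\mathbb{R}$, the $\Ric_{\Omega}$-dependent constant absorbs the Ricci commutator, and the last term is nonnegative by $f_{u}\le 0$. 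The bounds $|f|,|f_{x}|\le A$ are what allow the horizontal part of $\nabla f$ to be absorbed into the $C\mathcal{V}$ term.

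Next consider
$$w \;=\; \log\mathcal{V} \;+\; \alpha(M_0-u) \;+\; \psi(x),\qquad M_0=\sup_{\Omega}u,$$
with $\alpha>0$ to be chosen and
$$\psi(x)=\begin{cases}0 & \text{in case (a),}\\[2pt] -\beta\log\!\bigl(1-r(x)^{2}/R^{2}\bigr) & \text{in case (b),}\end{cases}\qquad r(x)=\dist(x,x_0).$$
In case (b) the blow-up of $\psi$ on $\partial B_R(x_0)$ forces the supremum of $w$ on $\overline{B_R(x_0)}$ to be attained at an interior point $x_\ast$. In case (a) either the supremum is attained on $\partial\Omega$ (whence the bound on $|\nabla u|$ follows from the hypothesis on $\max_{\partial\Omega}|\nabla u|$), or at an interior point $x_\ast\in\Omega$.

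At such an $x_\ast$, $\nabla w(x_\ast)=0$ and $Lw(x_\ast)\le 0$. Applying $L$ termwise, the identity $L(\log\mathcal{V})=L\mathcal{V}/\mathcal{V}-|\nabla^{L}\mathcal{V}|^{2}/\mathcal{V}^{2}$ causes the ``good'' gradient term from the Bochner inequality to cancel. The contribution $\alpha\, L u = -\alpha\mathcal{V} f$ is controlled by $\alpha A\mathcal{V}$; the contribution $L\psi$ is bounded in terms of $R$ and $\Ric_{\Omega}$ via the Hessian comparison theorem for $\Delta r$, which applies because $B_R(x_0)$ is a normal geodesic ball and $u$ has bounded height. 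Using $\nabla w(x_\ast)=0$ to rewrite $\nabla\mathcal{V}/\mathcal{V}$ in terms of $\alpha\nabla u+\nabla\psi$ and choosing $\alpha$ large (depending on $A$, $\Ric_{\Omega}$, and $R$ in case (b)) produces a strictly negative coefficient in front of $|\nabla u|^{2}$. This yields
$$\mathcal{V}(x_\ast)\;\le\; C\bigl(\sup_{\Omega}u,A,\Ric_{\Omega},R\bigr),$$
and evaluating $w$ at $x_0$ (case (b)) or at any $x\in\Omega$ (case (a)) gives the desired pointwise bound.

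The main obstacle is the careful bookkeeping in deriving the Bochner-type inequality for $L\mathcal{V}$: the non-symmetric tensor $a^{ij}$ and the Ricci commutator must be tracked while isolating the ``good'' gradient term that will cancel against $L(\log\mathcal{V})$. The hypothesis $f_u\le 0$ is essential at this step: without it, the term $-f_u|\nabla u|^{2}/\mathcal{V}$ has the wrong sign and cannot be absorbed, so no bound of this type can be extracted.
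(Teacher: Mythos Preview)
Your approach is the Korevaar--Spruck style gradient estimate and is in the same family as the paper's, which follows Ripoll--Telichevesky: both maximize an auxiliary quantity combining a function of $|\nabla u|$, an exponential weight in $u$, and (for (b)) a cutoff, and both use $f_t\le 0$ to discard the dangerous term $-f_t |\nabla u|^2(\cdots)\ge 0$. The paper's test function is $G=g(x)\,e^{ku}\,F(|\nabla u|)$ with $F(s)=s$, $g\equiv 1$ for (a) and $F(s)=\ln s$, $g=1-r^2/R^2$ for (b); in logarithmic form this is essentially your $w$ up to the sign of the $u$-weight (the paper takes $+ku$, you take $-\alpha u$; the scheme is insensitive to this sign since the dominant contribution at the critical point is quadratic in the parameter).

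One point in your sketch needs care. You write that ``the contribution $\alpha Lu=-\alpha\mathcal V f$ is controlled by $\alpha A\mathcal V$''. As stated this is not usable: $\alpha A\mathcal V$ grows with $\mathcal V$ and cannot be absorbed by a term that is merely $O(\alpha^2)$ at the critical point. What actually happens---and what the paper's explicit computation exhibits---is that this $\alpha\mathcal V f$ contribution \emph{cancels} against pieces of $L(\log\mathcal V)$ once the first-order relation $\nabla\log\mathcal V=\alpha\nabla u-\nabla\psi$ is substituted into the terms coming from the differentiated right-hand side $\nabla(-\mathcal V f)$; after the cancellation the surviving $\alpha$-dependent zero-order piece is $O(\alpha A/\mathcal V)$, not $O(\alpha A\mathcal V)$. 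Equivalently, if one runs the argument with the intrinsic Laplacian $\Delta_\Sigma$ on the graph rather than the base operator $L=a^{ij}\nabla_i\nabla_j$, the analogous term is $\alpha\,\Delta_\Sigma t=-\alpha f/\mathcal V$ from the outset and no cancellation is needed. Either way, your closing remark about ``careful bookkeeping'' is exactly the point: the Bochner inequality you wrote is only schematic, and this specific cancellation must be tracked for the argument to close.
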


First, observe that if $u$ is a classical solution of \eqref{eq-fgraph}, then $u$ satisfies
\begin{equation}
|\nabla u|^2 \Delta u + b(|\nabla u|) \nabla^2u(\nabla u, \nabla u) = -f(x,u) \, c( |\nabla u| ),
\label{mingrapheqOpen}
\end{equation} where $b(s)= -s^2/(s^2+1)$ and $c(s) = s^2\sqrt{1+s^2}$.

The next lemma is very close to Lemma 6 of \cite{RT} and therefore we omit its proof. 
\begin{lem}
If $u$ solves \eqref{mingrapheqOpen}, then in an orthonormal frame $E_1, \dots, E_n$ with $E_1=|\nabla u|^{-1}\nabla u$, the following equality holds
\begin{align*} (b+1) |\nabla u|  \nabla^2  |\nabla u|(E_1,E_1)  &+ |\nabla u| \sum_{i=2}^n  \nabla^2 |\nabla u|(E_i,E_i) \\[5pt] 
                           + b' |\nabla u| \nabla^2 u(E_1,E_1)^2 &+ b\sum_{i=2}^n \nabla^2 u (E_1,E_i)^2 \\[5pt]
                           - \sum_{i=1,j=2}^n \nabla^2 u(E_i,E_j)^2 - \Ric(\nabla u,\nabla u) &= \frac{2fc}{|\nabla u|^4} \nabla^2 u(\nabla u, \nabla u) \\[5pt]
                                                                                              &- \frac{f}{|\nabla u|^2}\langle \nabla c, \nabla u\rangle - \frac{c}{|\nabla u|^2} \langle \tilde{\nabla} f , \nabla u \rangle,  
\end{align*}
where $\tilde{\nabla}f = \nabla_x f(x,u) +  f_t(x,u) \nabla u.$
\label{lemmaAuxiliar1}
\end{lem}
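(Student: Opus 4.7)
The identity is pointwise, so fix $p \in \Omega$ with $\nabla u(p) \neq 0$ and work in the adapted orthonormal frame $\{E_1,\dots,E_n\}$ with $E_1 = \nabla u/|\nabla u|$, completing $E_1$ to a local orthonormal frame of the orthogonal complement. Write $u_{ij} = \nabla^2 u(E_i, E_j)$ and $u_{ij,k} = \nabla_{E_k}(\nabla^2 u)(E_i, E_j)$. In this frame the PDE \eqref{mingrapheqOpen} takes the simple form
\[
(1+b(|\nabla u|)) u_{11} + \sum_{i \ge 2} u_{ii} = -\frac{fc}{|\nabla u|^2}
\]
wherever $\nabla u \ne 0$. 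The plan is to apply $E_1$ to this scalar identity, multiply through by $|\nabla u|$, and convert every resulting third-order derivative into a second derivative of $|\nabla u|$ plus Hessian-squared corrections, letting the Ricci identity produce the $-\Ric(\nabla u, \nabla u)$ term.

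Differentiating the LHS in direction $E_1$ and using $E_1(|\nabla u|) = u_{11}$ yields $(1+b) E_1(u_{11}) + \sum_{i \ge 2} E_1(u_{ii}) + b' u_{11}^2$. Differentiating the RHS and expanding via $\tilde\nabla f = \nabla_x f + f_t\nabla u$, $E_1 c = c'(|\nabla u|) u_{11}$, and $E_1|\nabla u|^2 = 2|\nabla u|u_{11}$ reproduces, after multiplication by $|\nabla u|$, exactly the three terms on the RHS of the statement.

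The main computation is to rewrite $E_1(u_{11})$ and $E_1(u_{ii})$ in terms of $\nabla^2|\nabla u|$ and Hessian squares. The key input is obtained by differentiating the identity $|\nabla u| E_1 = \nabla u$, which gives $\nabla_X E_1 = \bigl(\nabla_X \nabla u - X(|\nabla u|) E_1\bigr)/|\nabla u|$ and hence $\langle \nabla_{E_j} E_1, E_k\rangle = u_{kj}/|\nabla u|$ for $k \ge 2$ (the $E_1$-component vanishes since $|E_1|=1$). Combined with $E_j|\nabla u| = u_{1j}$, a routine expansion yields formulas of the form
\[
u_{11,1} = \nabla^2|\nabla u|(E_1,E_1) - \sum_{k\ge 2} u_{1k}^2/|\nabla u|, \qquad u_{1i,i} = \nabla^2|\nabla u|(E_i,E_i) - \sum_{k\ge 2} u_{ki}^2/|\nabla u|,
\]
and analogous expansions of $E_1(u_{11})$ and $E_1(u_{ii})$ producing similar quadratic corrections. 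The Ricci identity applied to the 1-form $du$ gives $u_{ii,1} - u_{1i,i} = -R^l_{1ii}u_l$; summing over $i \ge 2$, the Riemann contributions assemble into $\sum_i R^l_{1ii} u_l = \Ric(E_1,\nabla u) = \Ric(\nabla u,\nabla u)/|\nabla u|$ (the $i=1$ summand vanishes by antisymmetry of $R$ in its first pair), yielding the $-\Ric(\nabla u,\nabla u)$ term after multiplication by $|\nabla u|$.

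Substituting everything back into the differentiated PDE (multiplied by $|\nabla u|$), the quadratic Hessian terms accumulate with coefficient $(1+b)-2$ on $\sum_{k\ge 2} u_{1k}^2$ and $-1$ on $\sum_{i,k\ge 2} u_{ki}^2$, which rearrange exactly to the claimed combination $b\sum_{i\ge 2} u_{1i}^2 - \sum_{i=1, j\ge 2} u_{ij}^2$. The main obstacle is the careful bookkeeping of the frame-torsion corrections $\langle \nabla_{E_i}E_j, E_k\rangle$: because $E_1 = \nabla u/|\nabla u|$ is prescribed by $u$, the frame cannot be frozen into a geodesic one, and the correction terms must be tracked systematically throughout. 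They nevertheless organize themselves into precisely the advertised quadratic combinations, and the full argument is mechanical once this structure is understood, which is why the authors refer the reader to the analogous Lemma 6 of \cite{RT} and omit the details.
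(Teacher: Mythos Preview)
Your proposal is correct and follows exactly the standard route that Lemma~6 of \cite{RT} takes: differentiate the scalar form of \eqref{mingrapheqOpen} along $E_1$, express the third derivatives $u_{11,1}$ and $u_{ii,1}$ via the Bochner-type identities for $\nabla^2|\nabla u|$ together with the Ricci commutation $u_{ii,1}-u_{1i,i}=-R^l_{\,1ii}u_l$, and collect the quadratic Hessian corrections. Since the paper itself omits the proof and simply points to \cite{RT}, there is nothing to compare beyond confirming that your bookkeeping of the frame terms and the coefficient $(1+b)-2=b-1$ on $\sum_{k\ge 2}u_{1k}^2$ matches the claimed combination $b\sum_{i\ge 2}u_{1i}^2-\sum_{i\ge 1,\,j\ge 2}u_{ij}^2$, which it does.
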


As in \cite{RT} we obtain an estimate for $|\nabla u|$ by considering a function of the form \begin{equation}\label{eq-funG}
G(x):= g(x)h(u(x))F(|\nabla u(x)|)
\end{equation}
and assuming that this function attains its maximum at an interior point $y_0$ of $\Omega$.
Then the matrix $(\nabla^2 \ln G (E_i, E_j))_{i,j}$ is nonpositive at $y_0$ and it holds
$$ 0 \ge \theta:= (b+1) \nabla^2 \ln G (E_1, E_1)(y_0) + \sum_{i \ge 2} \nabla^2 \ln G (E_i, E_i)(y_0).$$
At the end, with appropriate choices of $g$, $h$ and $F$, the inequality above 
 gives an upper bound for $|\nabla u|.$ 
 
The next lemma is the version of Lemma 7 of \cite{RT} to our setting and its proof follows the same steps as the ones presented there.

\begin{lem}
If $y_0 \in \Omega$ is a local maximum of $G$ and $\nabla u(y_0) \ne 0$, then at $y_0$
\begin{equation}
\frac{F'}{F}\nabla^2u(E_1,E_i) = -\frac{1}{g}\langle \nabla g, E_i\rangle - \frac{h'}{h} \langle \nabla u, E_i \rangle \text{  for }i\in \{1, \dots, n\}
\label{criticalRelation}
\end{equation}
and 
\begin{align}
\theta &= \left[ -\frac{F'b'}{F} + (b+1) \left( \frac{F''}{F} - \frac{(F')^2}{F^2} \right) \right] \nabla^2 u(E_1,E_1)^2 \nonumber \\[5pt] 
       &+ \frac{F'}{F|\nabla u|}\sum_{i\ge 2, j=1}^n \nabla^2 u(E_i,E_j)^2 \nonumber \\[5pt] 
       &+ \left[-\frac{F'b}{F|\nabla u|} + \frac{F''}{F} - \frac{(F')^2}{F^2} \right] \sum_{i \ge 2}^n \nabla^2 u(E_1,E_i)^2 \nonumber \\[5pt] 
       &+ (b+1)\left( \frac{h''}{h} - \frac{(h')^2}{h^2}\right)|\nabla u|^2 +\frac{|\nabla u| F'}{F} \Ric(E_1,E_1) \nonumber \\[5pt] 
       &+ \frac{1}{g} \left[ (b+1) \nabla^2g (E_1,E_1) + \sum_{i \ge 2} \nabla^2 g(E_i,E_i) \right] \nonumber \\[5pt]
       &-\frac{1}{g^2} \left[ (b+1)\langle \nabla g, E_1 \rangle^2 + \sum_{i \ge 2} \langle \nabla g, E_i\rangle^2 \right] 
       +\Bigg\{   \frac{2fc}{|\nabla u|^4} \nabla^2 u (\nabla u, \nabla u) \nonumber \\[5pt] 
       &- \frac{f}{|\nabla u|^2} \langle \nabla c, \nabla u \rangle - \frac{c}{|\nabla u|^2} \langle \tilde{\nabla} f, \nabla u \rangle \Bigg\} \frac{F'}{F|\nabla u|}  + \frac{h'}{h} \left[ \frac{-fc}{|\nabla u|^2} \right] \le 0.
\label{negativityOfTheMaximum} 
\end{align}
\end{lem}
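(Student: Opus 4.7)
The plan is to unpack $\ln G = \ln g + \ln h(u) + \ln F(|\nabla u|)$ and exploit that at a local maximum $y_0$ one has $\nabla\ln G(y_0) = 0$ and the matrix $\nabla^2 \ln G(y_0)$ is nonpositive. The hypothesis $\nabla u(y_0)\ne 0$ lets us work in the orthonormal frame with $E_1 = \nabla u/|\nabla u|$, in which $\langle\nabla u,E_i\rangle = |\nabla u|\,\delta_{1i}$ and the identity $\langle\nabla|\nabla u|,E_i\rangle = |\nabla u|^{-1}\nabla^2u(\nabla u,E_i) = \nabla^2 u(E_1,E_i)$ converts first derivatives of $|\nabla u|$ into Hessians of $u$. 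The inequality $\theta\le 0$ is then automatic from $b+1>0$ and the nonpositivity of $\nabla^2\ln G(y_0)$; the substance is the identity claimed for $\theta$.

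First I would compute $\nabla\ln G = \nabla g/g + (h'/h)\nabla u + (F'/F)\nabla|\nabla u|$, contract with $E_i$ and evaluate at $y_0$; the identity above immediately yields \eqref{criticalRelation}.

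For the Hessian, the chain rule splits $\nabla^2\ln G(X,Y)$ into three blocks: a $g$-block with coefficients $1/g$ and $-1/g^2$; an $h(u)$-block with coefficients $h''/h-(h'/h)^2$ on $\langle\nabla u,X\rangle\langle\nabla u,Y\rangle$ and $h'/h$ on $\nabla^2 u(X,Y)$; and an $F(|\nabla u|)$-block with the analogous structure. I would then form the weighted sum
\[
\theta = (b+1)\,\nabla^2\ln G(E_1,E_1) + \sum_{i\ge 2}\nabla^2\ln G(E_i,E_i)
\]
and distribute term by term. The $g$-block reproduces the $\nabla^2 g$ and $\langle\nabla g,E_i\rangle^2$ lines of \eqref{negativityOfTheMaximum} verbatim. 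Since $\langle\nabla u,E_i\rangle = 0$ for $i\ge 2$, the $(h''/h-(h'/h)^2)$ part collapses to the single term $(b+1)(h''/h-(h'/h)^2)|\nabla u|^2$. The $h'/h$ part sums to $(h'/h)[b\,\nabla^2u(E_1,E_1)+\Delta u]$, which by the PDE \eqref{mingrapheqOpen} equals $-(h'/h)\,fc/|\nabla u|^2$. The $(F''/F-(F'/F)^2)$ part, via $\langle\nabla|\nabla u|,E_i\rangle = \nabla^2u(E_1,E_i)$, produces the $(b+1)\nabla^2u(E_1,E_1)^2$ and $\sum_{i\ge 2}\nabla^2u(E_1,E_i)^2$ contributions with the stated factors. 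Finally, $(F'/F)\bigl[(b+1)\nabla^2|\nabla u|(E_1,E_1)+\sum_{i\ge 2}\nabla^2|\nabla u|(E_i,E_i)\bigr]$ equals $F'/(F|\nabla u|)$ times the left-hand side of Lemma \ref{lemmaAuxiliar1}; substituting that identity supplies the $-F'b'/F$ coefficient on $\nabla^2u(E_1,E_1)^2$, the $-F'b/(F|\nabla u|)$ coefficient on $\nabla^2u(E_1,E_i)^2$, the $F'/(F|\nabla u|)\sum_{i\ge 2,j}\nabla^2u(E_i,E_j)^2$ term, the Ricci term (rewritten via $\Ric(\nabla u,\nabla u)=|\nabla u|^2\Ric(E_1,E_1)$), and the bracket containing $f$, $c$ and $\tilde\nabla f$.

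The main obstacle is purely combinatorial: tracking the $(b+1)$ weight on the $E_1E_1$ slot separately from the unit weights on the orthogonal slots in each of the three blocks, and correctly merging the $(\ln F)''$ diagonal contribution with the $\nabla^2|\nabla u|$ contribution after substitution from Lemma \ref{lemmaAuxiliar1}. No new geometric input beyond \eqref{mingrapheqOpen} and Lemma \ref{lemmaAuxiliar1} is required; the relation \eqref{criticalRelation} plays no role in the derivation of \eqref{negativityOfTheMaximum} itself but will be used in conjunction with it to deduce pointwise gradient bounds via a judicious choice of $g$, $h$ and $F$.
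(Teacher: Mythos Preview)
Your proposal is correct and follows precisely the approach the paper intends: the paper omits the proof and points to Lemma~7 of \cite{RT}, whose argument is exactly the decomposition of $\ln G$ into the $g$-, $h(u)$- and $F(|\nabla u|)$-blocks that you describe, with the $h'/h$ block collapsed via \eqref{mingrapheqOpen} and the $F'/F\cdot\nabla^2|\nabla u|$ block substituted via Lemma~\ref{lemmaAuxiliar1}. Your remark that \eqref{criticalRelation} is not used in deriving \eqref{negativityOfTheMaximum} but only later (in the proof of Proposition~\ref{lem-gradint}) is also accurate.
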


We are now in position to prove Proposition \ref{lem-gradint}.
\begin{proof}
Choose $h(t) = e^{kt}$ in \eqref{eq-funG}, where $k$ is a positive constant. 
Then, from \eqref{criticalRelation}, at the maximum point $y_0$, we get
\begin{align}
\nabla^2u(E_1,E_i) &=  -\frac{F \langle \nabla g, E_i\rangle}{F'g} - k \frac{F}{F'}  \langle \nabla u, E_i \rangle \nonumber \\[5pt]
                   &= -\frac{F\, E_i(g)}{F'g} - k \frac{F}{F'}  \langle |\nabla u| E_1, E_i \rangle = -\frac{F\, E_i(g)}{F'g} -k \frac{F}{F'}|\nabla u| \delta_{1i},
\label{criticalRelationGlobal}
\end{align}
where $\delta_{1i}$ is the Kronecker delta. 
Hence, we have
$$ \nabla^2u (\nabla u, \nabla u) =  |\nabla u|^2 \nabla^2 u ( E_1 , E_1) = -k \frac{F}{F'}|\nabla u|^3 - \frac{F \, E_1(g)}{F' g}|\nabla u|^2 $$
and
\begin{align*}
 \langle \nabla c(|\nabla u|), \nabla u \rangle &= c'(|\nabla u|)  \nabla^2 u \left(\frac{\nabla u}{|\nabla u|} , \nabla u \right)  \\[5pt]
                                                &= -c'(|\nabla u|)\left( k \frac{F}{F'}|\nabla u|^2 + \frac{F \, E_1(g)}{F' g}|\nabla u|\right) \\[5pt]
&= \! -k \frac{F}{F'} \! \left(\frac{2|\nabla u|^3 + 3 |\nabla u|^5}{\sqrt{1+|\nabla u|^2}}\right) \! - \frac{F E_1(g)}{F' g}  \! \left(\frac{2|\nabla u|^2 + 3 |\nabla u|^4}{\sqrt{1+|\nabla u|^2}}\right)\! .
\end{align*}
Therefore, we can obtain an expression for the last four terms of \eqref{negativityOfTheMaximum}:
\begin{align*}
 &\left\{ \frac{2fc}{|\nabla u|^4} \nabla^2 u (\nabla u, \nabla u) - \frac{f}{|\nabla u|^2} \langle \nabla c, \nabla u \rangle 
- \frac{c}{|\nabla u|^2} \langle \tilde{\nabla} f, \nabla u \rangle \right\}\frac{F'}{F|\nabla u|}   -   \frac{kfc}{|\nabla u|^2}  \\[5pt]
 &= -\frac{2fc}{|\nabla u|^5} \left( k \frac{F}{F'}|\nabla u|^3 + \frac{F \, E_1(g)}{F' g}|\nabla u|^2 \right)\frac{F'}{F}  - \frac{c}{|\nabla u|^3} \langle \tilde{\nabla} f, \nabla u \rangle \frac{F'}{F}  -   \frac{kfc}{|\nabla u|^2} \\[5pt]
 &+ \frac{f}{|\nabla u|^3} \left[k \frac{F}{F'} \, \left(\frac{2|\nabla u|^3 + 3 |\nabla u|^5}{\sqrt{1+|\nabla u|^2}}\right) +\frac{F\, E_1(g)}{F' g} \, \left(\frac{2|\nabla u|^2 + 3 |\nabla u|^4}{\sqrt{1+|\nabla u|^2}}\right)\right] \frac{F'}{F} \\[5pt]
 &= kf\left[ -3 \sqrt{1+|\nabla u|^2} + \frac{2 + 3 |\nabla u|^2}{\sqrt{1+|\nabla u|^2}} \right] - \frac{c}{|\nabla u|^3} \langle \tilde{\nabla} f, \nabla u \rangle \frac{F'}{F} \\[5pt]
 & -\frac{f E_1(g)}{|\nabla u| g} \left[2 \sqrt{1+|\nabla u|^2} - \frac{2 + 3 |\nabla u|^2}{\sqrt{1+|\nabla u|^2}} \right] \\[5pt]
 &= - \frac{kf}{ \sqrt{1+|\nabla u|^2} } - \frac{F'\sqrt{1+|\nabla u|^2}}{F |\nabla u|}\langle \nabla_x f(x,u) +  f_t(x,u) \nabla u, \nabla u \rangle \\[5pt]
 & + \frac{f E_1(g)}{ g} \left[ \frac{|\nabla u|}{\sqrt{1+|\nabla u|^2}} \right].
\end{align*}

\

\noindent Hence, since $f$ satisfies \eqref{boundOf-f-form} (observe that $f_t \le 0$), we have
\begin{align*}
 &\left\{ \frac{2fc}{|\nabla u|^4} \nabla^2 u (\nabla u, \nabla u) - \frac{f}{|\nabla u|^2} \langle \nabla c, \nabla u \rangle 
- \frac{c}{|\nabla u|^2} \langle \tilde{\nabla} f, \nabla u \rangle \right\}\frac{F'}{F|\nabla u|}   -   \frac{kfc}{|\nabla u|^2} \\[5pt]
&\ge - \frac{k A}{ \sqrt{1+|\nabla u|^2} } - \frac{A F' \sqrt{1+|\nabla u|^2}}{F |\nabla u|} \\[5pt] 
&\quad -  f_t(x,u) \frac{F' |\nabla u| \sqrt{1+|\nabla u|^2}}{F} - \frac{A| E_1(g)| |\nabla u|}{g \sqrt{1+|\nabla u|^2}} \\[5pt]
& \ge - \frac{k A}{ \sqrt{1+|\nabla u|^2} } - \frac{A F' \sqrt{1+|\nabla u|^2}}{F |\nabla u|} - \frac{A| E_1(g)| |\nabla u|}{g \sqrt{1+|\nabla u|^2}}.
\end{align*}
assuming that $g, F >0$ and $F' \ge 0$.

\noindent Then, inequality \eqref{negativityOfTheMaximum} implies that
\begin{align}
 0 \ge \theta &\ge \left[ -\frac{F'b'}{F} + (b+1) \left( \frac{F''}{F} - \frac{(F')^2}{F^2} \right) \right] \nabla^2 u(E_1,E_1)^2 \nonumber \\[5pt]
       &+ \frac{F'}{F|\nabla u|}\sum_{i\ge 2, j=1}^n \nabla^2 u(E_i,E_j)^2 \nonumber \\[5pt]
       &+ \left[-\frac{F'b}{F|\nabla u|} + \frac{F''}{F} - \frac{(F')^2}{F^2} \right] \sum_{i \ge 2}^n \nabla^2 u(E_1,E_i)^2 \nonumber \\[5pt]
       &+ \frac{|\nabla u| F'}{F} \Ric(E_1,E_1) + \frac{1}{g} \left[ (b+1) \nabla^2g (E_1,E_1) + \sum_{i \ge 2} \nabla^2 g(E_i,E_i) \right] \nonumber \\[5pt] 
       &-\frac{1}{g^2} \left[ (b+1)\langle \nabla g, E_1 \rangle^2 + \sum_{i \ge 2} \langle \nabla g, E_i\rangle^2 \right] - \frac{k A}{ \sqrt{1+|\nabla u|^2} } \nonumber \\[5pt]
       &- \frac{A F' \sqrt{1+|\nabla u|^2}}{F |\nabla u|} - \frac{A| E_1(g)| |\nabla u|}{g \sqrt{1+|\nabla u|^2}} .
\label{negativityOfTheMaximumNEW} 
\end{align}
Now we can prove (a) and (b).

\

\noindent Proof of (a): Choose $F(s)=s$ and $g(x)\equiv 1$. Then, from \eqref{negativityOfTheMaximumNEW}, we have
\begin{align*}
0 \ge \theta &\ge \left[ -\frac{b'}{|\nabla u|} + (b+1) \left(-\frac{1}{|\nabla u|^2} \right) \right] \nabla^2 u(E_1,E_1)^2 \\[5pt]
       &+ \frac{1}{|\nabla u|^2}\sum_{i\ge 2, j=1}^n \nabla^2 u(E_i,E_j)^2 + \left[-\frac{b}{|\nabla u|^2}  - \frac{1}{|\nabla u|^2} \right] \sum_{i \ge 2}^n \nabla^2 u(E_1,E_i)^2 \\[5pt] 
       &+ \Ric(E_1,E_1) - \frac{k A}{ \sqrt{1+|\nabla u|^2} } - \frac{A  \sqrt{1+|\nabla u|^2}}{ |\nabla u|^2}.    \\[5pt]
\end{align*}
Observe that from \eqref{criticalRelationGlobal}, it follows that $\nabla^2 u(E_1,E_i) = - k |\nabla u|^2 \delta_{1i}$.
Hence, using that $b(s)=-s^2/(1+s^2)$, we conclude that
\begin{align}
0 \ge \theta &\ge \left[ \frac{|\nabla u|^2 -1}{(1+ |\nabla u|^2)^2 |\nabla u|^2} \right] \left(-k|\nabla u|^2 \right)^2 + \Ric(E_1,E_1) \nonumber \\[5pt]
       &- \frac{k A}{ \sqrt{1+|\nabla u|^2} } - \frac{A  \sqrt{1+|\nabla u|^2}}{ |\nabla u|^2}. 
\label{negativityOfTheMaximumGlobal} 
\end{align} 
If $|\nabla u(y_0)| \ge 2$, this inequality and Young's inequality imply 
\begin{align*}
0 \ge \theta &\ge \left[ \frac{3}{16 |\nabla u|^4} \right] k^2|\nabla u|^4 + \Ric(E_1,E_1)  - k A - A \\[5pt]
             &\ge \frac{3 k^2}{16} + \Ric(E_1,E_1)  - \frac{k^2}{8}- 2 A^2 - A ,
\end{align*}
that is, 
$$k  \le 4\sqrt{2A^2 + A - \Ric(E_1,E_1)}. $$ 
Therefore, if we take $k=5 \sqrt{2A^2 + A - \Ric(E_1,E_1)}$, this inequality is not satisfied and, then, the maximum of $G$ cannot happen in some interior point $y_0$ such that $|\nabla u(y_0)| \ge 2$. Thus, either
$$ \max_{\Omega} G(y) \le G(y_0) = e^{ku(y_0)} |\nabla u(y_0)| \le 2 e^{kM}$$
or
$$ \max_{\Omega} G(y) \le \max_{\partial \Omega} G(x) = \max_{\partial \Omega} e^{ku(x)} |\nabla u(x)| \le e^{k M} \max_{\partial \Omega} |\nabla u(x)|,$$
that is,
$$ |\nabla u(y)| \le e^{k M - ku(y)} (2 + \max_{\partial \Omega} |\nabla u(x)|) \le e^{2k M}(2+ \max_{\partial \Omega} |\nabla u(x)|),$$
for any $y \in \Omega$, proving (a).

\

\noindent Proof of (b): Choose $F(s)=\ln s$ and $g(x)= 1 - r(x)^2/R^2$, where $r(x)$ is the distance from $x$ to $x_0$.
First observe that from \eqref{criticalRelationGlobal} we have, at $y_0,$
$$ \frac{ \langle \nabla g, E_i\rangle}{g}  = -\frac{F'}{F} \nabla^2u(E_1,E_i)  - k |\nabla u| \delta_{1i}. $$
Then
$$ - \frac{ \langle \nabla g, E_1\rangle^2}{g^2} \ge -2 \frac{(F')^2}{F^2} \nabla^2u(E_1,E_1)^2 - 2 k^2 |\nabla u|^2 $$
and
$$ - \frac{ \langle \nabla g, E_i\rangle^2}{g^2} = - \frac{(F')^2}{F^2} \nabla^2u(E_1,E_i)^2 \quad {\rm for} \quad i\ge 2.$$
Therefore, from inequality \eqref{negativityOfTheMaximumNEW}, we get
\begin{align*}
 0 \ge \theta &\ge \left[ -\frac{F'b'}{F} + (b+1) \left( \frac{F''}{F} - 3\frac{(F')^2}{F^2} \right) \right] \nabla^2 u(E_1,E_1)^2 \nonumber \\[5pt] 
              &- 2 k^2 |\nabla u|^2 (b+1) + \frac{F'}{F|\nabla u|}\sum_{i\ge 2, j=1}^n \nabla^2 u(E_i,E_j)^2 \nonumber \\[5pt] 
              &+ \left[-\frac{F'b}{F|\nabla u|} + \frac{F''}{F} - 2 \frac{(F')^2}{F^2} \right] \sum_{i \ge 2}^n \nabla^2 u(E_1,E_i)^2 \nonumber \\[5pt]
       &+ \frac{|\nabla u| F'}{F} \Ric(E_1,E_1) + \frac{1}{g} \left[ (b+1) \nabla^2g (E_1,E_1) + \sum_{i \ge 2} \nabla^2 g(E_i,E_i) \right] \nonumber \\[5pt] 
       & - \frac{k A}{ \sqrt{1+|\nabla u|^2} } - \frac{A F' \sqrt{1+|\nabla u|^2}}{F |\nabla u|} - \frac{A| E_1(g)| |\nabla u|}{g \sqrt{1+|\nabla u|^2}} .
\end{align*}
Since 
$$ \frac{F'}{F|\nabla u|}\sum_{i\ge 2, j=1}^n \nabla^2 u(E_i,E_j)^2 \ge \frac{F'}{F|\nabla u|}\sum_{i\ge 2}^n \nabla^2 u(E_1,E_i)^2,$$
it follows that
\begin{align}
0 \ge \theta &\ge \left[ -\frac{F'b'}{F} + (b+1) \left( \frac{F''}{F} - 3\frac{(F')^2}{F^2} \right) \right] \nabla^2 u(E_1,E_1)^2 \nonumber \\[5pt] 
       &- 2 k^2 |\nabla u|^2 (b+1) + \left[\frac{F'(1-b)}{F|\nabla u|} + \frac{F''}{F} - 2 \frac{(F')^2}{F^2} \right] \sum_{i \ge 2}^n \nabla^2 u(E_1,E_i)^2 \nonumber \\[5pt]
       &+ \frac{|\nabla u| F'}{F} \Ric(E_1,E_1) + \frac{1}{g} \left[ (b+1) \nabla^2g (E_1,E_1) + \sum_{i \ge 2} \nabla^2 g(E_i,E_i) \right] \nonumber \\[5pt] 
       & - \frac{k A}{ \sqrt{1+|\nabla u|^2} } - \frac{A F' \sqrt{1+|\nabla u|^2}}{F |\nabla u|} - \frac{A| E_1(g)| |\nabla u|}{g \sqrt{1+|\nabla u|^2}} .
\label{negativityOfTheMaximumNEWFor-b} 
\end{align}

\

\

\noindent {\bf Claim:} Choosing $k=1,$ for $\Ric^-= - \displaystyle \min_{|\eta|=1} \min \{ \Ric(\eta, \eta), 0\}$, it holds that 
\begin{align}
 g(y_0) \ln |\nabla u(y_0)|  &\le 8 \left(  2 + 2A + \frac{1+ 2A}{R} \right. \nonumber \\[5pt] 
                             &+ \left. \max_{B_R(x_0)}|\Ric^-| + \frac{n}{r^2} \max_{B_R(x_0)}| \nabla^2 r^2| \right).
                             \label{eq-claim}
\end{align}

If $|\nabla u(y_0)| < e^{12},$ the inequality follows from $g\leq 1.$

Therefore we prove the claim assuming $|\nabla u(y_0)| \ge e^{12}.$ Observe that
\begin{align*}
  -\frac{F'b'}{F} + (b+1) \left( \frac{F''}{F} - 3\frac{(F')^2}{F^2} \right)  &= \frac{(|\nabla u|^2-1)\ln |\nabla u| - 3(|\nabla u|^2 + 1)}{ |\nabla u|^2 (1+|\nabla u|^2)^2  (\ln |\nabla u|)^2}\\[5pt]
  &\ge \frac{1}{4}\,\left( \frac{(|\nabla u|^2+1)\ln |\nabla u|}{ |\nabla u|^2 (1+|\nabla u|^2)^2  (\ln |\nabla u|)^2}\right) \\[5pt]
 &\ge \frac{1}{4}\left( \frac{1}{ |\nabla u|^2 (1+|\nabla u|^2)  (\ln |\nabla u|)}\right).
\end{align*}
We have also that
\begin{align*}
 \frac{F'(1-b)}{F|\nabla u|} + \frac{F''}{F} - 2 \frac{(F')^2}{F^2} &= \frac{|\nabla u|^2 \ln |\nabla u| -2 (1+|\nabla u|^2)}{|\nabla u|^2 (1+|\nabla u|^2) (\ln |\nabla u|)^2} > 0, \\[5pt]
\end{align*}
These two inequalities and \eqref{negativityOfTheMaximumNEWFor-b} yield
\begin{align}
 0 \ge \theta &\ge \left[ \frac{1}{4 |\nabla u|^2 (1+|\nabla u|^2)  (\ln |\nabla u|)} \right] \nabla^2 u(E_1,E_1)^2 - \frac{2 k^2 |\nabla u|^2}{1+ |\nabla u|^2}\nonumber \\[5pt]
       &+ \frac{1}{\ln |\nabla u|} \Ric(E_1,E_1) + \frac{1}{g} \left[ (b+1) \nabla^2g (E_1,E_1) + \sum_{i \ge 2} \nabla^2 g(E_i,E_i) \right] \nonumber \\[5pt] 
       & - \frac{k A}{ \sqrt{1+|\nabla u|^2} } - \frac{A  \sqrt{1+|\nabla u|^2}}{ |\nabla u|^2 \ln |\nabla u|} - \frac{A| E_1(g)| |\nabla u|}{g \sqrt{1+|\nabla u|^2}} .
\label{InqualityForGradient-1} 
\end{align}
Using that 
\begin{align*}
  \nabla^2u(E_1,E_1) &= - \frac{F}{F'} \frac{ \langle \nabla g, E_1\rangle}{g} - k \frac{F}{F'} |\nabla u|   \\[5pt]
                     &= -\frac{ |\nabla u| \ln |\nabla u| E_1(g)}{g} - k  |\nabla u|^2 \ln |\nabla u|,
\end{align*}
we get
$$  \nabla^2u(E_1,E_1)^2 \ge  -\frac{2 k |\nabla u|^3 (\ln |\nabla u|)^2 |E_1(g)|}{g} + k^2  |\nabla u|^4 (\ln |\nabla u|)^2.$$
Therefore, if $|\nabla u(y_0)| \ge e^{12}$,
\begin{align}
 0 \ge \theta &\ge  \frac{k^2  |\nabla u|^2 (\ln |\nabla u|)}{4 (1+|\nabla u|^2)}  - \frac{k |\nabla u| (\ln |\nabla u|) |E_1(g)|}{2 g (1+|\nabla u|^2) } - 2 k^2 \nonumber \\[5pt]
       &+ \frac{1}{\ln |\nabla u|} \Ric(E_1,E_1) + \frac{1}{g} \left[ (b+1) \nabla^2g (E_1,E_1) + \sum_{i \ge 2} \nabla^2 g(E_i,E_i) \right] \nonumber \\[5pt] 
       & - \frac{k A}{ \sqrt{1+|\nabla u|^2} } - \frac{A  \sqrt{1+|\nabla u|^2}}{ |\nabla u|^2 \ln |\nabla u|} - \frac{A| E_1(g)| |\nabla u|}{g \sqrt{1+|\nabla u|^2}} .
\label{InqualityForGradient-2} 
\end{align}
Since $$\nabla^2 g(E_i,E_i) \ge  -\frac{| \nabla^2 r^2|}{R^2}, $$
$b+1 < 1$, $|E_1(g)| \le |\nabla g| \le \frac{2r}{R^2} \le \frac{2}{R}$ and $0 < g \le 1$, we conclude that
$$ 0 \ge  \frac{k^2 (\ln |\nabla u|)}{8}  - \frac{k   }{R g} - \frac{2 k^2}{g} - \frac{|\Ric^-|}{g} - \frac{n| \nabla^2 r^2|}{R^2g}    -\frac{k A}{g} - \frac{A}{g} - \frac{2A}{Rg}. $$
Hence, for $k=1$, we obtain \eqref{eq-claim}.
$$g(y_0) \ln |\nabla u(y_0)| \le 8 \left(  2 + 2A + \frac{1+ 2A}{R} + \max_{B_R(x_0)}|\Ric^-| + \frac{n}{R^2} \max_{B_R(x_0)}| \nabla^2 r^2| \right)$$

Therefore,
\begin{align*}
 e^{u(x_0)} &\ln |\nabla u(x_0)| = G(x_0) \le G(y_0) \le g(y_0)e^{u(y_0)} \ln |\nabla u(y_0)| \\[5pt]
                                &\le 8 e^M \left(  2 + 2A + \frac{1+ 2A}{R} + \max_{B_R(x_0)}|\Ric^-| + \frac{n}{R^2} \max_{B_R(x_0)}| \nabla^2 r^2| \right),
\end{align*}
that is,
\begin{align}
 |\nabla u(x_0)| &\le \exp\left[8 e^{2M} \left(  2 + 2A + \frac{1+ 2A}{R} \right. \right. \nonumber \\[5pt] 
                 &+ \left. \left. \max_{B_R(x_0)}|\Ric^-| + \frac{n}{R^2} \max_{B_R(x_0)}| \nabla^2 r^2| \right)\right].  
\label{IneqFinalForGrad}
\end{align}

Since $\max_{B_R(x_0)}| \nabla^2 r^2|$ is bounded by a constant depending on the curvature and on $R,$ the result follows.
\end{proof}

\

We are now in position to prove Theorem \ref{thm-existence}.

\begin{proof}
[Proof of Theorem \ref{thm-existence}]
We begin by assuming that $\varphi\in
C^{2,\alpha}\left(\overline{\Omega}\right).$ Consider the following family
of Dirichlet problems
\begin{equation}
\begin{cases}
\dv\dfrac{\nabla v}{\sqrt{1+|\nabla v|^{2}}}+\tau f(x,v)=0\text{ in }\Omega,\\
v=\tau\varphi\text{ in }\partial\Omega,\ 0\leq\tau\leq1.
\end{cases}
\label{eqfamily}%
\end{equation}

Observe that from Lemma \ref{lem-altEgradbordo}, any solution $v_{\tau}$ to \eqref{eqfamily} is bounded by a constant that does not depend on $\tau$.  So Proposition \ref{lem-gradint} applies. Hence, there exists a constant $C,$ not depending on $\tau,$ such that for any solution $v_{\tau}$ to \eqref{eqfamily},
\[
\lVert v_{\tau}\rVert_{C^{1}(\overline{\Omega})}\leq C.
\]
Thanks to this estimate, we obtain a solution $v\in C^{2,\alpha}\left(
\overline{\Omega}\right)  $ to \eqref{eqfamily} by using the Leray-Schauder
method \cite[Theorem 13.8]{GilTru}.

If $\varphi\in C^{0}\left(  \partial
\Omega\right)  $ we take an approximating sequence of $\varphi$ by
$C^{2,\alpha}$ functions. Using then the previous case, the comparison
principle, Lemma \ref{lem-altEgradbordo} and Proposition \ref{lem-gradint} we obtain the existence of a solution
$v\in C^{2}\left(  \Omega\right)  \cap C^{0}\left(  \overline{\Omega}\right)
$ to \eqref{eq-fgraph}. 
This concludes the proof of Theorem \ref{thm-existence}.  
\end{proof}

\section{Scherk type solutions in the hyperbolic space} \label{sec-STSolutions}

From now on we concentrate in the hyperbolic space $\Hn.$ In order to prove Theorems \ref{thm-Scherk} and \ref{ContinuityOfSolutionWithSing}, we construct barriers that take value $+\infty$ in a totally  geodesic hypersphere of $\Hn.$

\begin{defin}
Let $S$ be a totally geodesic hypersphere (or a geodesic if $n=2$) of
$\mathbb{H}^{n}$ and $B$ be one connected component of $\mathbb{H}^{n}\backslash S$. 
Given $f\in C^{1}(\Hn\times\mathbb{R})$ and a constant $c$, if $u\in C^2(B)\cap C^0(\overline{B})$ is a solution of
\begin{equation}
\begin{cases}
\label{ScherkProblem} 
Q(v)= f(x,v)\text{ in } B\\
v = c\text{ on }\partial_{\infty}B\\
v= +\infty\text{ on } S,
\end{cases}
\end{equation}
where $\partial_{\infty}B$ is the asymptotic boundary of $B,$ we call $u$ a Scherk type solution to problem \eqref{ScherkProblem}. \label{def-Scherkprob}

Analogously we define Scherk type sub and supersolutions.
\end{defin}

Our next result is about the existence of Scherk type solutions and for that we assume that $f$ satisfies:
\begin{enumerate}[label=(\text{\Roman*}), ref=\text{\Roman*}]
\item \label{propphi1} for some fixed $o\in\mathbb{H}^{n}$, there exists a continuous
decreasing function $\phi:[0,+\infty)\rightarrow\mathbb{R}$ such that
\[
|f(x,t)|\leq\phi(r(x)),
\]
where $r(x)=\mathrm{dist}(x,o)$, 
and $\displaystyle\int_{0}^{+\infty}\sqrt{\phi(r)}\,dr<\infty$; 

\item \label{propphi2} there is a continuous function $h:\mathbb{R}\rightarrow\mathbb{R},$ such that $h(t)\rightarrow0$ as $t\rightarrow\pm\infty,$ for which
\[
|f(x,t)|\leq h(t), \, \forall x\in \Hn.
\]
\end{enumerate}

We also call $B$ from definition \ref{def-Scherkprob} a totally geodesic hyperball and we denote by $d=d_S$ the signed distance function to $S,$ positive in $B.$ 

Conditions \ref{propphi1} and \ref{propphi2} guarantee the existence of a nice function $\psi=\psi_S$ as stated below:

\begin{proposition}\label{prop-Psi}
Given $f\in C^{1}(\Hn\times\mathbb{R})$ satisfying conditions \ref{propphi1} and \ref{propphi2}, there exists a nonnegative $C^{1}$ function $\psi:\mathbb{R}\times\mathbb{R}\rightarrow\mathbb{R}$ such that 
\begin{enumerate}[label=(\roman{enumi})]
 \item \label{psiDefinitiona}  $\psi
(d(x),t)\geq|f(x,t)|$, for any $(x,t)\in
\mathbb{H}^{n}\times\mathbb{R}$;
\item  \label{psiDefinitionb}  $\psi(d,t)$, $\frac{\partial\psi
}{\partial d}(d,t)$ and $\frac{\partial\psi}{\partial t}(d,t)$ are bounded
functions; 
\item   \label{psiDefinitionc} for $d\in\mathbb{R}$, the map $t\mapsto\psi(d,t)$  is decreasing for $t\geq0$ and constant for $t\leq 0$; 
\item  \label{psiDefinitiond} for $t\in\mathbb{R}$, the map $d\mapsto\psi(d,t)$ is increasing in $(-\infty,\tilde{d})$  and 
decreasing in $(\tilde{d},+\infty)$, where $\tilde{d}$ is some real  number that does not depend on $t$; 
\item \label{psiDefinitionef} $\psi(d,t)$ converges to zero uniformly in
$t\in\mathbb{R}$ as $d\rightarrow\pm\infty$ and uniformly in $d\in\mathbb{R}$ as $t\rightarrow+\infty$; 
\item \label{psiDefinitiong} for
any $t\in\mathbb{R}$, $\displaystyle\int_{0}^{+\infty}\psi(s,t)\;ds<+\infty$. 
\end{enumerate}
\end{proposition}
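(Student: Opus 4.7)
The plan is to collapse the $x$-dependence of $|f(x,t)|$ to a single scalar variable and then combine the resulting radial bound with the $t$-bound from \ref{propphi2}. Since $d := d_S$ is $1$-Lipschitz, writing $d_0 := d(o)$ one has $r(x) \ge |d(x) - d_0|$ for every $x \in \mathbb{H}^n$, so the monotonicity of $\phi$ together with \ref{propphi1} yields
\[
|f(x,t)| \le \phi(|d(x) - d_0|).
\]
Thus it suffices to construct a $C^1$ function $\psi: \mathbb{R}^2 \to [0,\infty)$ satisfying all of the listed properties and simultaneously dominating both $\phi(|d - d_0|)$ and $h(t)$.

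I would first build two one-variable $C^1$ envelopes $A$ and $B$. The $d$-envelope $A:\mathbb{R} \to (0,\infty)$ should satisfy $A(d) \ge \phi(|d-d_0|)$, be symmetric about $d_0$, unimodal with maximum at $d_0$, bounded with bounded derivative, $A(d) \to 0$ as $|d| \to \infty$, and integrable on $\mathbb{R}$. Integrability is available because the bound $\phi(r) \le \sqrt{\phi(0)}\,\sqrt{\phi(r)}$ combined with \ref{propphi1} gives $\int_0^\infty \phi < \infty$, and such an $A$ is produced by a standard mollification of a shifted, flattened version of $\phi(|\cdot - d_0|)$. The $t$-envelope $B:\mathbb{R} \to (0,\infty)$ should satisfy $B(t) \equiv M := \sup_{s\in\mathbb R} h(s)$ on $(-\infty, 0]$, be $C^1$ and strictly decreasing on $[0,\infty)$ with $B'$ bounded, $B(t) \ge h(t)$ for every $t$, and $B(t) \to 0$ as $t \to +\infty$; this is built by smoothing the nonincreasing envelope $t \mapsto \sup_{s \ge t} h(s)$ and flattening it on $(-\infty, 0]$.

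I would then set
\[
\psi(d,t) := \frac{2\, A(d)\, B(t)}{A(d) + B(t)},
\]
a $C^1$ function on $\mathbb{R}^2$ since $A, B > 0$. The elementary inequality $\min(x,y) \le 2xy/(x+y) \le 2\min(x,y)$ for $x, y > 0$ gives at once \ref{psiDefinitiona}, the uniform decays in \ref{psiDefinitionef} (through the bounds $\psi \le 2A(d)$ and $\psi \le 2B(t)$), and the integrability \ref{psiDefinitiong}. A direct computation,
\[
\partial_d \psi = \frac{2 B^2 A'(d)}{(A+B)^2}, \qquad \partial_t \psi = \frac{2 A^2 B'(t)}{(A+B)^2},
\]
then yields the boundedness \ref{psiDefinitionb}, the $t$-monotonicity \ref{psiDefinitionc}, and the unimodality \ref{psiDefinitiond} with $\tilde d = d_0$ independent of $t$. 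The main obstacle is the choice of combination rule: an additive $A+B$ fails to decay simultaneously as $d \to \pm\infty$ and as $t \to +\infty$, while the pointwise minimum is not $C^1$. The harmonic combination $2AB/(A+B)$ is the cleanest $C^1$ substitute, and crucially its partial derivatives factor so that the required monotonicities of $\psi$ in each variable are inherited directly from those of the corresponding envelope.
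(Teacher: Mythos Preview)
Your argument is correct and shares the same opening reduction with the paper: both use the $1$-Lipschitz property of $d$ to get $r(x)\ge |d(x)-d_0|$ and hence $|f(x,t)|\le \phi(|d(x)-d_0|)$, so the whole question reduces to combining the two scalar bounds $\phi(|d-d_0|)$ and $h(t)$ into a single $C^1$ function with the six listed properties.

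The difference is in the combination rule. The paper simply takes the \emph{geometric mean},
\[
\psi(d,t)=\sqrt{\phi(|d-d_0|)\,h(t)}\qquad(t\ge 0),
\]
extended constantly for $t\le 0$, after declaring without loss of generality that $\phi,h$ are $C^1$, $\phi'(0)=0$, and $h$ is even and nonincreasing on $[0,\infty)$. Property \ref{psiDefinitiona} then follows from $|f|\le\phi$ and $|f|\le h$, hence $|f|^2\le \phi\,h$, and property \ref{psiDefinitiong} comes directly from the hypothesis $\int_0^\infty\sqrt{\phi}<\infty$ with no extra step. Your \emph{harmonic mean} $2AB/(A+B)$ is a perfectly good alternative: the factorised partials $\partial_d\psi=2B^2A'/(A+B)^2$ and $\partial_t\psi=2A^2B'/(A+B)^2$ make \ref{psiDefinitionb}--\ref{psiDefinitiond} transparent, and the domination $\min(A,B)\le\psi\le 2\min(A,B)$ handles \ref{psiDefinitiona}, \ref{psiDefinitionef}, \ref{psiDefinitiong} at once. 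The price is that you must build the envelopes $A,B$ explicitly and keep them \emph{strictly} positive (otherwise $2AB/(A+B)$ is not defined, let alone $C^1$); you should say a word about this, e.g.\ by adding a Gaussian $\varepsilon e^{-(d-d_0)^2}$ to $A$ and a decaying tail to $B$. The paper's ``WLOG'' hides exactly the same envelope construction, so your version is arguably more honest about the regularisation, while the paper's is shorter and uses the integrability hypothesis in its original form.
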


\begin{proof} 
First, observe that we can assume w.l.g. that $\phi$ and $h$ are $C^1$ functions, $\phi'(0)=0$, $h$ is even and decreasing on $[0,+\infty)$.
The proof follows by considering 
$$\psi(d,t):= \left\{ \begin{array}{rr}  \sqrt{ \phi(|d - d(o)|) h(t)} & {\rm if} \quad t \ge 0 \\[5pt]
                                         \sqrt{ \phi(|d - d(o)|) h(0)} & {\rm if} \quad t < 0.
\end{array} \right.                                     
$$
The conditions (ii)-(vi) can be verified directly from the definition. To prove condition $(i)$, 
let $x \in \mathbb{H}^n$. By the triangle inequality 
$$ r(x) + d(x) \ge d(o) \text{ and }
r(x) + d(o) \ge d(x),$$
that is, $r(x)\ge |d(x) - d(o)|$. (This holds even if $d(x) < 0$ or $d(o) < 0$.) Hence, using that $\phi$ is decreasing and hypothesis (1), we have that
$$ |f(x,t)| \le \phi(r(x))\le \phi(|d(x) - d(o)|).$$
From this and \ref{propphi2}, we have $|f(x,t)|^2\le \phi(|d(x) - d(o)|) h(t) \le \psi^2(d(x),t)$.
\end{proof}

The main result of this section is the following:

\begin{thm}
\label{s}
 $\mathbb{H}^{n}\backslash S$. 
Let $B\subset \Hn$ be a totally  geodesic hyperball. Suppose that $f\in C^{1}(\Hn\times\mathbb{R})$ is nonnegative and satisfies $f_t(x,t)\le 0$  in $\Hn\times\mathbb{R},$ \ref{propphi1} and \ref{propphi2}. Then, for any constant $c\in \mathbb{R}$, there exists a solution $u$ to
the problem \eqref{ScherkProblem}.
Besides, if $f$ is not necessarily nonnegative and satisfies only conditions \ref{propphi1} and \ref{propphi2}, this Dirichlet problem has a supersolution. If we replace $v = +\infty$ on $S$ by $v = -\infty$ on $S$ and assume \ref{propphi1} and \ref{propphi2}, the problem has a subsolution. 
\end{thm}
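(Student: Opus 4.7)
To prove Theorem \ref{s} I would combine three ingredients: a Scherk-type supersolution $W$ on $B$, depending only on the signed distance $d=d_S$ to $S$ and built using $\psi$ from Proposition \ref{prop-Psi}; the bounded-domain existence Theorem \ref{thm-existence}; and the local gradient estimate of Proposition \ref{lem-gradint}(b). The plan is to produce $W$ by solving an ODE on $(0,+\infty)$, to obtain approximating solutions on bounded subdomains of $B$, and to pass to a limit using the gradient estimate with $W$ as upper barrier.

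For $W$, seek $W(x)=w(d(x))+c$ with $w$ decreasing. Since $|\nabla d|=1$ and $\Delta d=(n-1)\tanh(d)$ in $\Hn$ (the equidistant hypersurfaces to a totally geodesic $S$ have mean curvature $(n-1)\tanh(d)$), setting $\phi(d):=-w'(d)/\sqrt{1+w'(d)^{2}}$ gives
\[
Q(W)=\phi'(d)+(n-1)\tanh(d)\,\phi(d).
\]
Using $|f|\le\psi$ from Proposition \ref{prop-Psi}\ref{psiDefinitiona}, the supersolution condition $Q(W)\ge f(\cdot,W)$ reduces to a coupled first-order ODE system in $\phi$ and $w$,
\[
\phi'(d)+(n-1)\tanh(d)\,\phi(d)\ \ge\ \psi\bigl(d,w(d)+c\bigr),\qquad w(d)=\int_d^{+\infty}\frac{\phi(s)}{\sqrt{1-\phi(s)^{2}}}\,ds,
\]
to be solved with $\phi(0^{+})=1$ and $\phi(+\infty)=0$. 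The condition $\phi(0^{+})=1$ (with the right order of contact) forces $w\to+\infty$ on $S$, through non-integrability of $\phi/\sqrt{1-\phi^{2}}$ near $0$; decay $\phi(+\infty)=0$ fast enough makes $\phi/\sqrt{1-\phi^{2}}$ integrable at infinity and yields $W\to c$ as $d\to+\infty$. I would construct $\phi$ by a monotone iteration in the coupling between $\phi$ and $w$, using the boundedness and decay items \ref{psiDefinitionb}--\ref{psiDefinitiong} of Proposition \ref{prop-Psi} (which rest on \emph{both} \ref{propphi1} and \ref{propphi2}) to close the iteration. The subsolution asserted in the theorem comes from the mirror construction with $\phi(0^{+})=-1$, so $w\to-\infty$ at $S$, and does not use $f\ge 0$.

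For the main argument, let $\Omega_k\subset\subset B$ be an increasing sequence of bounded $C^{2,\alpha}$ domains with $\bigcup_k\Omega_k=B$, chosen so that the curvature hypotheses of Theorem \ref{thm-existence} hold on each $\Omega_k$ (for instance, by assembling boundaries from geodesic spheres and horospheres, which in $\Hn$ have inward mean curvature $\ge n-1$; for $\sup|f|>n-1$ one replaces the exhaustion by a Perron scheme based on solvability on small geodesic balls, where $(n-1)\coth(r)$ is arbitrarily large). Theorem \ref{thm-existence} produces solutions $u_k$ of $Q(u_k)=f(\cdot,u_k)$ on $\Omega_k$ with continuous boundary data equal to $c$ on the ``asymptotic'' part of $\partial\Omega_k$ and tending to $+\infty$ on the part approaching $S$. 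The hypothesis $f\ge 0$ makes $c$ a subsolution, so the comparison principle (valid because $f_t\le 0$, cf.\ \cite{AMP}) yields $c\le u_k\le W$. The uniform $L^{\infty}$-bound plus Proposition \ref{lem-gradint}(b) give local $C^{1}$-estimates independent of $k$, and standard elliptic regularity extracts a subsequence converging in $C^{2}_{\mathrm{loc}}(B)$ to $u\in C^{2}(B)\cap C^{0}(\overline{B})$ solving the PDE and sandwiched between $c$ and $W$; then $W\to c$ at $\partial_\infty B$ gives $u\to c$ there, while the divergent boundary values of $u_k$ near $S$ give $u\to+\infty$ on $S$.

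The main obstacle is the ODE step. It is a coupled problem with singular boundary data at $d=0^{+}$, and the decay of $\psi$ at infinity is only as strong as guaranteed by Proposition \ref{prop-Psi}---too weak for the naive integrating factor $\cosh(d)^{n-1}$ to produce a convergent quadrature. The interplay between \ref{propphi1} (decay in the spatial variable, controlling $W$ far from $S$) and \ref{propphi2} (decay in the height variable, which activates precisely where $W$ is large, i.e.\ near $S$) is the technical heart of the construction: wherever one decay is weak the other must kick in, and it is this dovetailing that dictates the global hypotheses of the theorem.
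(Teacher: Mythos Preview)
Your overall architecture is close to the paper's, but there is a genuine gap in the step where you conclude $u\to+\infty$ on $S$. Your only lower barrier is the constant $c$: from $c\le u_k\le W$ you pass to the limit and get $c\le u\le W$ on $B$, which forces $u\to c$ at $\partial_\infty B$ (since $W\to c$ there), but says nothing about blow-up at $S$. The sentence ``the divergent boundary values of $u_k$ near $S$ give $u\to+\infty$ on $S$'' is not justified: boundary data of $u_k$ being large on a piece of $\partial\Omega_k$ that collapses onto $S$ does not, by itself, force the interior limit $u$ to diverge there. One needs a \emph{subsolution} on $B$ that already blows up on $S$.

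This is exactly where the paper spends the hypothesis $f\ge 0$: the classical (minimal) Scherk graph
\[
W_{\min}(d)=c-\int_{d_0}^{d}\frac{1}{\sqrt{\cosh^{2n-2}(t)-1}}\,dt
\]
solves $Q(W_{\min})=0\le f(\cdot,W_{\min})$, hence is a subsolution of $Q(v)=f(x,v)$ on $B$, and it satisfies $W_{\min}\to+\infty$ on $S$ and $W_{\min}\to c$ at $\partial_\infty B$. The paper then runs Perron's method on the set of subsolutions sandwiched between $W_{\min}$ and the Scherk-type supersolution $w_{h_c}$; both barriers diverge on $S$, so the Perron solution inherits $u\to+\infty$ on $S$ for free. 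Your exhaustion-by-$\Omega_k$ scheme would also work, but only once you insert $W_{\min}$ (or any subsolution blowing up on $S$) as the lower barrier in place of the constant $c$. Note also that your ``mirror'' construction with $\phi(0^{+})=-1$ produces a function going to $-\infty$ on $S$; that is the subsolution for the \emph{other} boundary condition in the theorem, not a lower barrier for the $+\infty$ problem.

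A secondary point: your ODE step is sketched as a ``monotone iteration in the coupling,'' but the actual difficulty is that you are prescribing \emph{two} endpoint conditions, $\phi(0^{+})=1$ and $\phi(+\infty)=0$, for a first-order equation coupled to $w$. The paper handles this by a shooting argument in an auxiliary initial value $\gamma=g(d_0)$, showing existence and uniqueness of the critical $\gamma_0$ for which the maximal interval is exactly $(0,+\infty)$ with $g(0^{+})=-1$, and then proving that the corresponding asymptotic value $\ell(h)=w(+\infty)$ depends continuously on the height $h=w(d_0)$ and is surjective onto $\mathbb{R}$. Your outline does not yet indicate how the two boundary conditions are matched.
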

To prove this theorem, our main task is to construct supersolutions for this equation. The
existence of Scherk type solutions will then be an immediate consequence of Perron's method, which due to Theorem \ref{thm-existence} applies in our setting.

Now let us explain our strategy to construct supersolutions for \eqref{ScherkProblem}. We will look for a solution $w=w(d(x))$, where $d(x)=dist(x,S)$, $S=\partial B$,  to the following problem 
\begin{equation}
\begin{cases}
Q(v\circ d)= \psi(d ,v \circ d) \text{ in } B\\
v = c \text{ on } \partial_{\infty} B\\
v = + \infty\text{ on }S.
\end{cases}  \label{superScherkEDP}%
\end{equation}

Since $\Delta d= (n-1)\tanh d$, we can rewrite \eqref{superScherkEDP} as 
\begin{equation}
\begin{cases}
\displaystyle \frac{w^{\prime\prime}}{(1 + w^{\prime2})^{3/2}} + (n-1)
\tanh(d) \left(\frac{w^{\prime}}{\sqrt{1 + w^{\prime2}}}\right) = -\psi(d,w)
\text{ for }  d > 0\\
w(+\infty) = c\\
w(0) = + \infty. \label{superScherkEDO}
\end{cases}  %
\end{equation}

\

Next, we set $g=\frac{ w^{\prime}}{\sqrt{1+ w^{\prime2}}}$ on $[0,+\infty)$.
Observe that $-1 < g < 1$ and that the ODE in \eqref{superScherkEDO} rewrites as the following system for $(w,g)$
\begin{equation}
\begin{cases}
w^{\prime}(d) = \displaystyle \frac{g(d)}{\sqrt{1-g^{2}(d)}}\\[10pt]
g^{\prime}(d) = -(n-1)\tanh(d) \; g(d) - \psi(d, w(d)).
\end{cases}  \label{systemForWG}%
\end{equation}
or
\[
\left[
\begin{array}
[c]{c}%
w(d)\\
g(d)
\end{array}
\right]  ^{\prime}= F(d,w(d),g(d)),
\]
where $F: \mathbb{R} \times\mathbb{R} \times(-1,1) \to\mathbb{R}^{2}$
corresponds to the right-hand side of \eqref{systemForWG}. Given $d_{0} > 0$,
$h \in\mathbb{R}$ and $\gamma\in(-1,1)$, we consider the initial condition
\begin{equation}
\begin{cases}
w(d_{0}) = h\\
g(d_{0}) = \gamma.
\end{cases}  \label{InitialConditionsForWG}%
\end{equation}
Note that, from \eqref{systemForWG} and \eqref{InitialConditionsForWG}, we
get
\begin{equation}
w_{\gamma}(d) = h + \int_{d_{0}}^{d} \frac{g_{\gamma}(t)}{\sqrt{1-g_{\gamma
}^{2}(t)}}\; dt \label{wExpressionByG}%
\end{equation}
and
\begin{equation}
g_{\gamma}(d) = \frac{1}{(\cosh d)^{n-1}} \left(  \gamma\cosh^{n-1}(d_{0}) -
\int_{d_{0}}^{d} \psi(s,w_{\gamma}(s)) (\cosh s)^{n-1} ds \right)  .
\label{gExpressionByW}%
\end{equation}
Since $F$ is $C^{1}$, from the classical theory, there exists only one maximal
solution $(w_{d_{0},h,\gamma},g_{d_{0},h,\gamma})$ to the system
\eqref{systemForWG} with initial condition \eqref{InitialConditionsForWG}.
Let $I_{d_{0},h,\gamma}$ be the domain of this solution. Observe that
$w_{d_{0},h,\gamma}$ is the solution of the second order ODE in
\eqref{superScherkEDO} with the initial conditions
\[
w(d_{0}) = h \quad\mathrm{and} \quad w^{\prime}(d_{0}) = \frac{\gamma}%
{\sqrt{1-\gamma^{2}}}.
\]
To solve \eqref{superScherkEDO}, we have to prove that there exist $d_{0} >
0$, $h \in\mathbb{R}$ and $\gamma\in(-1,1)$ such that $w_{d_{0},h,\gamma
}(0)=+\infty$ and $w_{d_{0},h,\gamma}(+\infty)=c$. 

To do so, we first fix $h$ and $d_{0} >0$, and then study the behavior of
$I_{\gamma}:=I_{d_{0},h,\gamma}$, $w_{\gamma}:=w_{d_{0},h,\gamma}$ and
$g_{\gamma}:=g_{d_{0},h,\gamma}$ as $\gamma$ varies. We will prove, in Proposition \ref{uniquenessOfGamma-0}, that there exists a unique $\gamma_0=\gamma_0(d_0,h)$ such that $I_{\gamma_0} = (0,\infty)$. In a second moment, we will show in Lemma \ref{lemsurjective} that the application $h\mapsto \lim_{d\to \infty} w_{d_0,h,\gamma_0(d_0,h)}(d)$ is well-defined and surjective on $\R.$ This will establish the existence of solution for the problem \eqref{superScherkEDP}.

Let us now put into practice the strategy described above. 

\ 

First, we establish several properties of $g_{\gamma}$ that we will use extensively along the proof. We begin by proving some bounds for $g^\prime_\gamma$ and $g^{\prime \prime}_\gamma$.

\begin{lem}
Let $d_{0} >0$ and $\gamma\in(-1,1)$. Then we have 
\begin{enumerate}
\item[-]  $|g^{\prime}_{\gamma}| \le n-1 + $  $\max \psi$; 
\item[-] $g^{\prime\prime}_{\gamma}$ is bounded from
above (resp. from below) in the set $$\{ d \in I_{\gamma} \; | \; w^{\prime}_{\gamma}(d) \le0\  (\text{resp. }\ge0) \};$$
\item[-] if $g_{\gamma}(d_{1}) > 0$ for some $d_{1} \in I_{\gamma}$, then $g_{\gamma
}(d) > 0$ for any $d \le d_{1}$, $d\in I_{\gamma}$. (If $g_{\gamma}(d_{1}) \ge 0$ for some $d_{1} \in I_{\gamma}$, then $g_{\gamma
}(d) \ge 0$ for any $d \le d_{1}$, $d\in I_{\gamma}$.)
\end{enumerate}
\label{boundnessOfTheDerivativesOfg}
\end{lem}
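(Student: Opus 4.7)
The plan is to derive all three bullets from the ODE
\begin{equation*}
g'_\gamma(d) = -(n-1)\tanh(d)\, g_\gamma(d) - \psi(d, w_\gamma(d))
\end{equation*}
in \eqref{systemForWG}, exploiting the boundedness and sign properties of $\psi$ collected in Proposition \ref{prop-Psi}.

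For the first bullet, I would simply bound the right-hand side pointwise: since $|g_\gamma| < 1$, $|\tanh d| \le 1$, and $\psi$ is globally bounded by Proposition \ref{prop-Psi}\ref{psiDefinitionb}, the triangle inequality yields $|g'_\gamma(d)| \le (n-1) + \max \psi$ immediately, uniformly in $\gamma$, $d_0$, and $d \in I_\gamma$.

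For the second bullet, I would differentiate the ODE once more to obtain
\begin{equation*}
g''_\gamma = -(n-1)\operatorname{sech}^2(d)\, g_\gamma - (n-1)\tanh(d)\, g'_\gamma - \psi_d(d, w_\gamma) - \psi_t(d, w_\gamma)\, w'_\gamma.
\end{equation*}
The first three summands on the right are uniformly bounded, thanks to $|g_\gamma|<1$, the first bullet just proved, and Proposition \ref{prop-Psi}\ref{psiDefinitionb}. The only term whose modulus may be unbounded is $-\psi_t(d, w_\gamma)\, w'_\gamma$, because $w'_\gamma = g_\gamma/\sqrt{1-g_\gamma^2}$ blows up as $|g_\gamma| \to 1$. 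The decisive observation is that $\psi_t \le 0$ everywhere by Proposition \ref{prop-Psi}\ref{psiDefinitionc}, so $-\psi_t(d,w_\gamma)\, w'_\gamma$ has the same sign as $w'_\gamma$. Consequently, on the region $\{w'_\gamma \le 0\}$ this term is nonpositive, which gives an upper bound on $g''_\gamma$; on $\{w'_\gamma \ge 0\}$ it is nonnegative, which gives a lower bound. Keeping the signs straight is the only real subtlety in the whole lemma.

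For the third bullet, the cleanest route is to multiply the ODE by the integrating factor $(\cosh d)^{n-1}$, which turns it into
\begin{equation*}
\bigl((\cosh d)^{n-1} g_\gamma(d)\bigr)' = -(\cosh d)^{n-1}\, \psi(d, w_\gamma(d)).
\end{equation*}
For any $d \le d_1$ with both points in the connected interval $I_\gamma$, integrating from $d$ to $d_1$ produces
\begin{equation*}
(\cosh d)^{n-1} g_\gamma(d) = (\cosh d_1)^{n-1} g_\gamma(d_1) + \int_{d}^{d_1} (\cosh s)^{n-1}\, \psi(s, w_\gamma(s))\, ds.
\end{equation*}
Since $\psi \ge 0$ by construction in Proposition \ref{prop-Psi}, the integral is nonnegative, so $g_\gamma(d_1) > 0$ forces $g_\gamma(d) > 0$, and the same computation yields the analogous nonstrict statement. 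This last bullet is really a maximum-principle statement in integrated form, and I expect it to be the workhorse for later monotonicity arguments on $\gamma \mapsto g_\gamma$.
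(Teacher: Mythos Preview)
Your proposal is correct and follows essentially the same route as the paper: the first bullet is the pointwise bound from the ODE, the second comes from differentiating and using $\psi_t\le 0$ to control the sign of the unbounded term, and the third is obtained by recognizing $(\cosh d)^{n-1}$ as an integrating factor so that $(\cosh d)^{n-1}g_\gamma(d)$ is nonincreasing. The only cosmetic difference is that the paper states the monotonicity of $(\cosh d)^{n-1}g_\gamma(d)$ directly rather than writing out the integrated identity, but the content is identical.
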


\begin{proof}
Remind that $-1 <g_{\gamma}(d) < 1$ for any $d\in
I_{\gamma}$ and 
$\psi$ is bounded from \ref{psiDefinitionc}, \ref{psiDefinitiond} and \ref{psiDefinitiond}. Then the bound on $|g_{\gamma}^{\prime}|$ follows directly from \eqref{systemForWG}.

Next, we prove that $g^{\prime\prime}_{\gamma}$ is bounded from above in the set $\{
d \in I_{\gamma} \; | \; w^{\prime}_{\gamma}(d) \le0 \}$ (the other statement follows in the same way). Observe that,
differentiating the second equation of \eqref{systemForWG}, we obtain%

\begin{align*}
g_{\gamma}^{\prime\prime}(d)  &  =-(n-1)\;\mathrm{sech}^{2}(d)\,g_{\gamma
}(d)-(n-1)\tanh(d)\,g_{\gamma}^{\prime}(d)\\[5pt]
&  \quad-\frac{\partial\psi}{\partial d}(d,w_{\gamma}(d))-\frac{\partial\psi
}{\partial t}(d,w_{\gamma}(d))\,w_{\gamma}^{\prime}(d).
\end{align*}
Using that $w_{\gamma}^{\prime}(d)\leq0$ and $\dfrac{\partial \psi}{\partial t}\leq0$ (from \ref{psiDefinitionc}), we get 
\[
g_{\gamma}^{\prime\prime}(d)\leq-(n-1)\;\mathrm{sech}^{2}(d)\,g_{\gamma
}(d)-(n-1)\tanh(d)\,g_{\gamma}^{\prime}(d)-\frac{\partial\psi}{\partial
d}(d,w_{\gamma}(d)).
\]
Since $|g_{\gamma}^{\prime}|$ and $\dfrac{\partial \psi}{\partial d}$ are bounded (see \ref{psiDefinitionb}), it follows that $g_{\gamma
}^{\prime\prime}$ is bounded from above.

To prove the last statement, observe that, multiplying \eqref{systemForWG} by
$\cosh^{n-1} (d)$, we obtain
\begin{equation}
( (\cosh d)^{n-1} g_{\gamma}(d))^{\prime}=-(\cosh d)^{n-1} \psi(d,w_{\gamma}(d)) \le0,
\label{detivativeOfProductGammaCosh}%
\end{equation}
that is, $(\cosh d)^{n-1} g_{\gamma}(d)$ is non increasing. Hence, if
$g_{\gamma}(d_{1}) > 0$, then
\[
(\cosh d)^{n-1} g_{\gamma}(d) \ge(\cosh d_{1})^{n-1} g_{\gamma}(d_{1}) >0
\quad\mathrm{for} \quad d \le d_{1}, \,d\in I_{\gamma}.
\]
\end{proof}

\begin{lem}
Let $d_{0} > \tilde{d} > 0$ ($\tilde{d}$ is defined in \ref{psiDefinitiond}) be such that
\begin{equation}\frac{1}{2} (n-1) \tanh d_{0} -
\psi(d_{0},0) > 0. \label{d0Condition1}%
\end{equation}
Then, for any $\gamma\in(-1,1)$, for any $ d >
d_{0}$ ($d \in I_{\gamma}$), there holds
\begin{equation}
\min\{-\frac{1}{2} , \gamma\}< g_{\gamma}(d) \le \max\{0,\gamma\}
\label{gIsBoundedByAboveForDLarge1}
\end{equation}
Moreover, if $\gamma\ge0$, we have
\begin{equation}
\label{gIsBoundedByAboveForDLarge2}
g_{\gamma}(d) \ge0 \text{ for }  d \le d_{0}\, (d \in I_{\gamma
}).
\end{equation}

 \label{gIsBoundedByBelowForDLarge}
\end{lem}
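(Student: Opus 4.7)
The plan is to handle the upper bound by a direct monotonicity/sign argument and the lower bound by a first-crossing barrier argument exploiting \eqref{d0Condition1}. The workhorse estimate will be $\psi(d,t) \le \psi(d_0, 0)$ for every $d \ge d_0$ and every $t \in \mathbb{R}$. This follows in two steps from Proposition \ref{prop-Psi}: property \ref{psiDefinitiond} gives $\psi(d,t) \le \psi(d_0, t)$ since $d \ge d_0 > \tilde d$ and $\psi(\cdot,t)$ decreases past $\tilde d$; property \ref{psiDefinitionc} then gives $\psi(d_0, t) \le \psi(d_0, 0)$ (for $t \ge 0$ by monotonicity in $t$, for $t \le 0$ by equality).

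For the upper bound $g_\gamma(d) \le \max\{0, \gamma\}$ I would split on the sign of $\gamma$. If $\gamma \ge 0$, equation \eqref{detivativeOfProductGammaCosh} says $d \mapsto (\cosh d)^{n-1} g_\gamma(d)$ is non-increasing, so the monotonicity of $\cosh$ on $[0,\infty)$ yields $g_\gamma(d) \le \gamma$ for $d > d_0$. If $\gamma < 0$, the third item of Lemma \ref{boundnessOfTheDerivativesOfg} forbids any $d_1 > d_0$ with $g_\gamma(d_1) > 0$, since such a $d_1$ would propagate backwards and give $g_\gamma(d_0) > 0$, contradicting $\gamma < 0$.

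For the lower bound, set $m := \min\{-\tfrac12, \gamma\}$ and argue by contradiction: suppose $g_\gamma$ meets $m$ at some first time $d^* > d_0$. Then $c := g_\gamma(d^*) = m \le -\tfrac12$, so $-c \ge \tfrac12$, and
\[
g'_\gamma(d^*) = -(n-1)\tanh(d^*)\, c - \psi(d^*, w_\gamma(d^*)) \ge \tfrac{n-1}{2}\tanh(d_0) - \psi(d_0, 0) > 0,
\]
by monotonicity of $\tanh$, the workhorse $\psi$-estimate, and \eqref{d0Condition1}. But since $d^*$ is a first crossing from above, $g'_\gamma(d^*) \le 0$, contradiction. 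When $\gamma \le -\tfrac12$ so that $m = \gamma$ and $g_\gamma(d_0)$ already equals $m$, the very same computation at $d_0$ itself yields $g'_\gamma(d_0) > 0$, which ensures $g_\gamma > \gamma$ immediately to the right of $d_0$ and makes the first-crossing scheme applicable.

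The last clause, $g_\gamma \ge 0$ on $I_\gamma \cap (-\infty, d_0]$ when $\gamma \ge 0$, is immediate from the parenthetical ``$\ge 0$'' form of the third item of Lemma \ref{boundnessOfTheDerivativesOfg} applied to $d_1 = d_0$. The only subtle point in the whole argument is that the $\psi$-bound must be uniform in the a priori unknown value $w_\gamma(d^*)$, which is exactly what properties \ref{psiDefinitionc} and \ref{psiDefinitiond} deliver.
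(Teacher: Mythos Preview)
Your proof is correct. The upper bound and the last clause are handled essentially as in the paper (monotonicity of $(\cosh d)^{n-1}g_\gamma$ via \eqref{detivativeOfProductGammaCosh}, together with the sign-propagation item of Lemma \ref{boundnessOfTheDerivativesOfg}).

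For the lower bound you take a genuinely different route. The paper introduces an explicit comparison function
\[
a_{\tilde\gamma}(d)=\frac{1}{\cosh^{n-1}d}\Bigl(\tilde\gamma\cosh^{n-1}d_0-\int_{d_0}^d\psi(s,0)\cosh^{n-1}s\,ds\Bigr),\qquad \tilde\gamma=\min\{-\tfrac12,\gamma\},
\]
shows $g_\gamma\ge a_{\tilde\gamma}$ on $[d_0,\infty)$ from $\psi(s,w_\gamma(s))\le\psi(s,0)$, and then proves $a_{\tilde\gamma}$ is nondecreasing by a second-derivative contradiction (differentiating the linear ODE for $a_{\tilde\gamma}$ and using that $\psi(\cdot,0)$ decreases past $\tilde d$). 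You instead run a first-crossing argument directly on $g_\gamma$: at any point $d^*\ge d_0$ where $g_\gamma(d^*)\le -\tfrac12$ your workhorse estimate $\psi(d^*,w_\gamma(d^*))\le\psi(d_0,0)$ combined with \eqref{d0Condition1} forces $g'_\gamma(d^*)>0$, which rules out a first downward crossing of the level $m=\min\{-\tfrac12,\gamma\}$. This is shorter and avoids the auxiliary function entirely. The paper's detour does buy a small dividend, namely the comparison $g_\gamma\ge a_{\tilde\gamma}$ and the derivative estimate recorded in the Remark following the lemma (used in Proposition \ref{gamma-zero-biggerThan-1}); but that estimate, $g'_\gamma(d_0)\ge\tfrac12(n-1)\tanh d_0-\psi(d_0,0)$ for $\gamma\le-\tfrac12$, is exactly your computation at $d_0$, so nothing is lost.
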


\begin{proof}
First observe that, since $\psi(d,0) \to0$ and $\tanh d \to1$ as $d
\to+\infty$, one can find $d_{0} >0$ satisfying \eqref{d0Condition1}.
Let $a_{\gamma}:[d_{0},+\infty)\rightarrow \R$ be defined by
\[
a_{\gamma}(d) = \frac{1}{(\cosh d)^{n-1}} \left(  \gamma\cosh^{n-1}(d_{0}) -
\int_{d_{0}}^{d} \psi(s,0) (\cosh s)^{n-1} ds \right)  .
\]
 Using that  $\psi(s,0) \ge\psi(s,w_{\gamma
}(s))$, for any $s$, (which follows from \ref{psiDefinitionc}) and \eqref{gExpressionByW}, we get that
\begin{equation}
g_{\gamma}(d) \ge a_{\tilde{\gamma}}(d) \quad\mathrm{for} \quad d \ge d_{0},
\label{relationBetweenGandA}%
\end{equation}
where
$\tilde{\gamma} = \min\{ -\frac{1}{2}, \gamma\}$. 
Now we prove that $a_{\tilde{\gamma}}$ is nondecreasing. First notice that
\begin{equation}
a_{\tilde{\gamma}}^{\prime}(d) = -\psi(d,0) -(n-1)\tanh(d) \; a_{\tilde
{\gamma}}(d). \label{EquationFor-a-gamma-tilde}%
\end{equation}
Hence, from $-\tilde{\gamma} \ge1/2$ and \eqref{d0Condition1}, we deduce that
\begin{align}
a_{\tilde{\gamma}}^{\prime}(d_{0})  &  = -\psi(d_{0},0) -(n-1)\tanh(d_{0}) \;
\tilde{\gamma}\nonumber\\[2pt]
&  \ge-\psi(d_{0},0) + \frac{1}{2} (n-1)\tanh(d_{0}) > 0.
\label{a-gammaTildeLowerBound}%
\end{align}
Suppose by contradiction that there exists some $d_{1} > d_{0}$ such that $a_{\tilde{\gamma}%
}^{\prime}(d_{1}) < 0$. Since $a_{\tilde{\gamma}}^{\prime}$ is continuous and
$a_{\tilde{\gamma}}^{\prime}(d_{0}) > 0$, there is some $d_{2} \in(d_{0}%
,d_{1})$ such that $a_{\tilde{\gamma}}^{\prime}(d_{2}) =0 $ and $a_{\tilde
{\gamma}}^{\prime}(d) < 0$ for $d \in(d_{2},d_{1}]$. By the Mean Value
Theorem, there is some $d_{3} \in(d_{2},d_{1})$ such that $a_{\tilde{\gamma}%
}^{\prime\prime}(d_{3}) < 0$. Then, from \eqref{EquationFor-a-gamma-tilde}, we get that
\[
0> a_{\tilde{\gamma}}^{\prime\prime}(d_{3}) = -\psi^{\prime}(d_{3},0)
-(n-1)(\mathrm{sech}\, (d_{3}))^{2} \; a_{\tilde{\gamma}}(d_{3}) - (n-1)
\tanh(d_{3}) \; a_{\tilde{\gamma}}^{\prime}(d_{3}).
\]
We get a contradiction from the fact that $\psi(d,0)$ is decreasing in ($\tilde{d},+\infty)$, $a_{\tilde{\gamma}}$ is negative, and $a_{\tilde{\gamma}}^{\prime}(d_{3}) < 0$. Therefore we have proved that $a_{\tilde{\gamma}}(d)$ is nondecreasing for $d>d_0$. Since $a_{\tilde{\gamma}%
}^{\prime}(d_{0}) >0$,  using \eqref{relationBetweenGandA}, we conclude that 
\[
g_{\gamma}(d) \ge a_{\tilde{\gamma}}(d) > a_{\tilde{\gamma}}(d_{0})= \tilde{\gamma} \quad\mathrm{for }
\quad d > d_{0}.
\]

Finally the upper bound of \eqref{gIsBoundedByAboveForDLarge1} and \eqref{gIsBoundedByAboveForDLarge2} are direct consequences of \eqref{gExpressionByW} and
$\psi\ge0$, and they hold for any $d_0>0$ i.e. not necessarily satisfying \eqref{d0Condition1}.

\end{proof}

\begin{rem}
Let $d_0$ be such that \eqref{d0Condition1} holds and $-1 < \gamma
\le-\frac{1}{2}$. Noticing that $\tilde \gamma =\gamma$ and $a_{\tilde \gamma}(d_0)=\gamma = g_{\gamma}(d_0)$, using \eqref{relationBetweenGandA} and \eqref{a-gammaTildeLowerBound}, we obtain
\begin{equation}
\label{lowerBoundForGgammaPrimeAtD0}
g_{\gamma}^{\prime}(d_{0})\ge a^\prime_{\tilde \gamma}(d_0) \ge\frac{1}{2} (n-1) \tanh d_{0} - \psi(d_{0},0).
\end{equation}

\end{rem}

\

Thanks to the two previous lemmas, we are able to prove that the domain of our maximal solution is of the form $(d_\gamma ,+\infty)$, for some $d_\gamma < d_0$. Moreover, we charaterize the behavior of our solution when $d$ goes to $d_\gamma$. 
\begin{cor}\label{LimitOfg-gamma}
Let $d_{0} >0$ as in Lemma \ref{gIsBoundedByBelowForDLarge} and let $\gamma
\in(-1,1)$. Then the maximal interval $I_\gamma$ of $(w_\gamma, g_\gamma)$ has the form $I_{\gamma}=(d_{\gamma},
+\infty)$, where $d_{\gamma} \in[-\infty,d_{0})$. Furthermore:
\begin{enumerate}[label=(\alph{enumi})]
\item \label{cor-a}$\displaystyle \lim_{d\to+\infty}g_{\gamma}(d) = 0;$ 
\item \label{cor-b}if $d_{\gamma} \ne-\infty$, then $\displaystyle \lim_{d\to d_{\gamma}}g_{\gamma
}(d) = 1$ or $\displaystyle \lim_{d\to d_{\gamma}}g_{\gamma}(d) = -1$;
\item \label{cor-c}if $d_{\gamma} \ne-\infty$ and $\gamma\ge0$, then
$\displaystyle \lim_{d\to d_{\gamma}}g_{\gamma}(d) = 1$; 
\item \label{cor-d}if
$\displaystyle \lim_{d\to d_{\gamma}}g_{\gamma}(d) = -1$, then $g_{\gamma} <
0$ in $I_{\gamma}$.
\end{enumerate}
 
\end{cor}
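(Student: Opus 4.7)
The plan is first to pin down the shape of $I_\gamma$, then prove items \ref{cor-a}--\ref{cor-d}, with item \ref{cor-b} being the main step.

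\textbf{Shape of $I_\gamma$.} Suppose the right endpoint $d^*_\gamma$ of $I_\gamma$ were finite. On $[d_0,d^*_\gamma)$, Lemma \ref{gIsBoundedByBelowForDLarge} confines $g_\gamma$ to the compact subset $(\min\{-1/2,\gamma\}, \max\{0,\gamma\}]$ of $(-1,1)$, so $w'_\gamma = g_\gamma/\sqrt{1-g_\gamma^2}$ is bounded and $g'_\gamma$ is bounded by Lemma \ref{boundnessOfTheDerivativesOfg}. Both $w_\gamma$ and $g_\gamma$ are then Lipschitz and admit limits at $d^*_\gamma$ with $\lim g_\gamma \in (-1,1)$, so the standard ODE extension theorem contradicts maximality. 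Hence $d^*_\gamma = +\infty$, and $d_\gamma < d_0$ follows from $d_0 \in I_\gamma$.

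\textbf{Proof of \ref{cor-a}.} By \eqref{detivativeOfProductGammaCosh}, $P(d) := \cosh^{n-1}(d)\,g_\gamma(d)$ is non-increasing, so $\lim_{d\to+\infty} P(d)$ exists in $[-\infty,+\infty)$. If finite, then $g_\gamma = P/\cosh^{n-1} \to 0$ immediately. If $-\infty$, apply l'H\^{o}pital's rule to the quotient $P(d)/\cosh^{n-1}(d)$: the ratio of derivatives simplifies to $-\psi(d,w_\gamma(d))/[(n-1)\tanh(d)]$, which tends to $0$ by property \ref{psiDefinitionef} of Proposition \ref{prop-Psi}. Either way $g_\gamma(d) \to 0$.

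\textbf{Proof of \ref{cor-b} -- the main step.} Assume $d_\gamma > -\infty$. Since $F$ is $C^1$ on $\mathbb{R}\times\mathbb{R}\times(-1,1)$, maximality forces $(w_\gamma,g_\gamma)$ to leave every compact subset of $\mathbb{R}\times(-1,1)$ as $d\to d_\gamma^+$. If $g_\gamma$ remained in some $[-M,M]\subset(-1,1)$ throughout the finite-length interval $(d_\gamma,d_0]$, then $|w'_\gamma|$ would be bounded and hence $w_\gamma$ bounded, contradicting escape from compacts; therefore $\limsup_{d\to d_\gamma^+}|g_\gamma(d)| = 1$. The global bound $|g'_\gamma|\le n-1 + \max\psi$ from Lemma \ref{boundnessOfTheDerivativesOfg} makes $g_\gamma$ Lipschitz on $I_\gamma$, so $g_\gamma$ extends continuously to $d_\gamma$ and admits a well-defined limit in $[-1,1]$. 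Combined with $\limsup = 1$, this limit must be $+1$ or $-1$.

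\textbf{Proofs of \ref{cor-c} and \ref{cor-d}.} For \ref{cor-c}, inequality \eqref{gIsBoundedByAboveForDLarge2} of Lemma \ref{gIsBoundedByBelowForDLarge} gives $g_\gamma \ge 0$ on $(d_\gamma,d_0]$ when $\gamma \ge 0$, so the limit from \ref{cor-b} is nonnegative and must equal $+1$. For \ref{cor-d}, if some $d^* \in I_\gamma$ had $g_\gamma(d^*) \ge 0$, the third item of Lemma \ref{boundnessOfTheDerivativesOfg} would propagate $g_\gamma \ge 0$ to every $d \le d^*$ in $I_\gamma$, including points arbitrarily close to $d_\gamma$, contradicting $\lim g_\gamma = -1$. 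The principal obstacle throughout is the oscillation-ruling step in \ref{cor-b}, resolved by the uniform Lipschitz control on $g_\gamma$.
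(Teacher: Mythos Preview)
Your proof is correct and largely parallels the paper's, with one genuine difference worth noting.

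For the shape of $I_\gamma$ and parts \ref{cor-b}, \ref{cor-c}, \ref{cor-d}, you follow the same line as the paper; in fact your treatment of \ref{cor-b} is more explicit, since the paper simply writes ``using the same argument as before'' whereas you spell out the Lipschitz-extension step that guarantees the limit of $g_\gamma$ at $d_\gamma$ actually exists.

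The real divergence is in \ref{cor-a}. The paper bounds $|g_\gamma(d)|$ explicitly via \eqref{gExpressionByW} by $\cosh^{n-1}(d_0)/\cosh^{n-1}(d) + \rho(d)$, with $\rho$ as in \eqref{eq-defrho}, and then proves $\rho(d)\to 0$ by an $\varepsilon$-splitting of the integral. You instead observe that $P(d)=\cosh^{n-1}(d)\,g_\gamma(d)$ is monotone and apply l'H\^opital when $P\to -\infty$. Your route is shorter and perfectly valid for establishing \ref{cor-a}. What it does not produce is the function $\rho$ itself, which the paper introduces here precisely because it is reused later (Remark \ref{rmk-rhoint} and the proof of Lemma \ref{estimateBetweenW-h-dAndTheLimit}) to get the integrability estimate \eqref{boundForIntegralOfRho} and the uniform convergence of $w_h$. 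So your argument buys elegance in isolation, while the paper's buys a tool needed downstream.
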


\ 

\begin{proof} 
Suppose by contradiction that $I_{\gamma}=(d_{\gamma}, b)$,
where $b < +\infty$. Thanks to Lemma \ref{boundnessOfTheDerivativesOfg} and Lemma \ref{gIsBoundedByBelowForDLarge}, there exists $(\bar{w}, \bar{g})\in \R \times (-1,1)$ such that
\[
\lim_{d \to b} (d,w_{\gamma}(d), g_{\gamma}(d)) = (b, \bar{w}, \bar{g})
\in\Omega,
\]
where $\Omega= \mathbb{R} \times\mathbb{R} \times(-1,1)$ is the domain of $F$.
However by classical ODE theory, this contradicts the fact that $I_{\gamma}$
is maximal proving that $I_{\gamma}=(d_{\gamma},+\infty)$.

If $d_{\gamma} \ne-\infty$, using the same argument as before, it is clear that
$$\lim_{d \to d_{\gamma}} g_{\gamma}(d)= \pm 1$$ 
proving \ref{cor-b}. 
The proof of $\ref{cor-c}$ is a consequence of \ref{cor-b} and \eqref{gIsBoundedByAboveForDLarge2} whereas
 $\ref{cor-d}$ is a direct consequence of the last statement of Lemma
\ref{boundnessOfTheDerivativesOfg}.

Finally we prove $\ref{cor-a}$. Using \eqref{gExpressionByW}, we get, for $d\geq d_0$,
\begin{align*}
|g_{\gamma}(d)|& \le\frac{|\gamma|\cosh^{n-1}(d_{0})}{\cosh^{n-1}(d)} +
\frac{\displaystyle \left|  \int_{d_{0}}^{d} \psi(s,w_{\gamma}(s)) \cosh
^{n-1}(s) ds \; \right|  }{\cosh^{n-1}(d)}\\
&\leq \frac{ \cosh^{n-1}(d_{0})}{\cosh^{n-1}(d)} + \rho(d), 
\end{align*}
where
\begin{equation}\label{eq-defrho}
\rho(d) = \frac{ \displaystyle \int_{d_{0}}^{d} \psi(s,0) \cosh^{n-1}(s) \; ds
}{ \cosh^{n-1}(d)}.
\end{equation}
It is easy to see that the proof boils down to show that $\rho (d)\rightarrow 0$ as  $d\rightarrow 0$. Let us prove this last point.\\

 Let $\varepsilon> 0$. Since $\psi(d,0) \to0$ as $d
\to+\infty$, there exists $d_{1} > d_{0}$ such that $\psi(d,0) <
\varepsilon/2^{n}$ for $d \ge d_{1}$ and there exists $d_2>d_1$ such that, for all $d\geq d_2$, 
\[
\frac{\displaystyle   \int_{d_{0}}^{d_{1}} \psi(s,0) \cosh^{n-1}(s) ds }%
{\cosh^{n-1}(d)} \leq \dfrac{\varepsilon}{2}.
\]
Therefore, for $d \ge d_{2}$, we have
\begin{align*}
\rho(d)  &  = \frac{\displaystyle \int_{d_{0}}^{d_{1}} \psi(s,0) \cosh
^{n-1}(s) ds }{\cosh^{n-1}(d)} + \frac{\displaystyle \int_{d_{1}}^{d}
\psi(s,0) \cosh^{n-1}(s) ds }{\cosh^{n-1}(d)}\\[10pt]
&  \le\frac{\varepsilon}{2} + \frac{\displaystyle \int_{d_{1}}^{d}
\varepsilon/2^{n} \cosh^{n-1}(s) ds }{\cosh^{n-1}(d)}<\varepsilon ,
\end{align*}
proving that $\displaystyle \lim_{d\to+\infty}\rho(d)= 0$. This concludes the proof.
 \end{proof}

\begin{rem}\label{rmk-rhoint}
In the following, we will need some more refined estimate on $\rho$. More precisely, one can show that  $|\rho|$ is integrable in
$[d_{0},+\infty)$ and
\begin{equation}
\displaystyle \int_{d_{1}}^{d_{2}} \rho(t) \; dt \leq \frac{2^{n-1}}{n-1} \left(
\rho(d_{1}) + \int_{d_{1}}^{d_{2}} \psi(s,0) \, ds \right)  \quad\mathrm{for}
\quad d_{2} > d_{1} \ge d_{0}. \label{boundForIntegralOfRho}%
\end{equation}
Indeed, let $d_{2} > d_{1} \ge d_{0}$. Using the
definition of $\rho$, we have
\begin{align*}
\int_{d_{1}}^{d_{2}} \rho(t) \, dt
&  = \int_{d_{0}}^{d_{1}} \psi(s,0) \cosh^{n-1}(s) \int_{d_{1}}^{d_{2}}
\frac{1}{\cosh^{n-1}(t)} \, dt \,ds\nonumber\\[5pt]
&  \quad+ \int_{d_{1}}^{d_{2}} \psi(s,0) \cosh^{n-1}(s) \int_{s}^{d_{2}}
\frac{1}{\cosh^{n-1}(t)} \, dt \,ds\nonumber\\[5pt]
& <  \frac{2^{n-1}}{n-1} \left(
\rho(d_{1}) + \int_{d_{1}}^{d_{2}} \psi(s,0) \, ds \right)  \quad\mathrm{for}
\quad d_{2} > d_{1} \ge d_{0}, \label{integralOfRho1}%
\end{align*}
proving \eqref{boundForIntegralOfRho}. The integrability of $|\rho|$ in $[d_0, \infty )$ is then a direct consequence of \ref{psiDefinitiong}.
\end{rem}

\

For $d_{0} >0$ as in Lemma \ref{gIsBoundedByBelowForDLarge}, we set
\[
A = \{ \gamma\in(-1,1) \; \; | \; \displaystyle \inf_{d > 0 , d \in I_{\gamma
}} g_{\gamma}(d) > -1 \}.
\]
Observe that $A$ is nonempty since $0 \in A$ due to the fact $\inf_{d \in I_{\gamma}} g_{0}(d) \ge-1/2$
according to Lemma \ref{gIsBoundedByBelowForDLarge}. We define
\begin{equation}
\gamma_{0} = \inf A . \label{gammaoDefinition}%
\end{equation}

We will show in the following that $\gamma_0$ is the unique initial data such that $I_{\gamma_0}=\R^+$. Before proceeding, let us show some preliminary properties of the set $A$ and of $\gamma_0$.

\begin{lem}
$\gamma\in A$ if and only if $d_{\gamma} < 0$ or $\displaystyle{\lim_{d \to d_{\gamma}%
}g_{\gamma}(d) =1}$. \label{characterizationOFA}
\end{lem}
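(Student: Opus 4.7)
The plan is to directly combine the two main previous results for the flow $(w_\gamma,g_\gamma)$: Lemma \ref{gIsBoundedByBelowForDLarge}, which gives the uniform bound $g_\gamma(d)>\min\{-1/2,\gamma\}$ for all $d>d_0$, and part \ref{cor-b} of Corollary \ref{LimitOfg-gamma}, which forces $\lim_{d\to d_\gamma}g_\gamma(d)\in\{-1,+1\}$ whenever $d_\gamma\neq -\infty$. Once these are in hand, the only way the infimum of $g_\gamma$ on $\{d>0\}\cap I_\gamma$ can approach $-1$ is if $d_\gamma\in[0,d_0)$ and $g_\gamma\to -1$ at $d_\gamma$; everything else is excluded by continuity on compact pieces and the lower bound on $(d_0,+\infty)$.

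For the direction ($\Leftarrow$), I would split into two cases. If $d_\gamma<0$, then $[0,d_0]\subset I_\gamma$ and $g_\gamma$ is continuous on this compact set with values in $(-1,1)$, so it is bounded away from $-1$ there; together with Lemma \ref{gIsBoundedByBelowForDLarge} on $[d_0,+\infty)$, this yields $\inf_{d>0,\,d\in I_\gamma}g_\gamma(d)>-1$, so $\gamma\in A$. If instead $d_\gamma\ge 0$ and $\lim_{d\to d_\gamma}g_\gamma(d)=1$, I would choose $\delta>0$ so that $g_\gamma\ge 1/2$ on $(d_\gamma,d_\gamma+\delta]$; continuity on the compact interval $[d_\gamma+\delta,d_0]$ (with values in $(-1,1)$) plus Lemma \ref{gIsBoundedByBelowForDLarge} on $[d_0,+\infty)$ again give an infimum strictly larger than $-1$, so $\gamma\in A$.

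For the direction ($\Rightarrow$), I would argue by contrapositive. Assume $d_\gamma\ge 0$ and $\lim_{d\to d_\gamma}g_\gamma(d)\neq 1$. By part \ref{cor-b} of Corollary \ref{LimitOfg-gamma}, this limit must equal $-1$. Since $d_\gamma\ge 0$, one has $(d_\gamma,+\infty)\subset(0,+\infty)\cap I_\gamma$, and along any sequence $d_n\to d_\gamma^+$ the values $g_\gamma(d_n)$ tend to $-1$, so $\inf_{d>0,\,d\in I_\gamma}g_\gamma(d)=-1$ and $\gamma\notin A$.

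There is no genuine obstacle here; the only bookkeeping subtlety is the boundary case $d_\gamma=0$, where $0\notin I_\gamma$ while $(0,+\infty)=I_\gamma$, and both arguments handle this without modification. The lemma is essentially a repackaging of Lemma \ref{gIsBoundedByBelowForDLarge} and Corollary \ref{LimitOfg-gamma} into a convenient characterization of $A$ that will be used in the sequel to pin down the critical initial value $\gamma_0$ from \eqref{gammaoDefinition}.
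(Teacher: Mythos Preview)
Your proof is correct and follows essentially the same approach as the paper's: both directions hinge on Corollary \ref{LimitOfg-gamma}\ref{cor-b} for the dichotomy at $d_\gamma$, and on continuity plus a tail bound for the reverse implication. The only cosmetic difference is that you invoke the lower bound of Lemma \ref{gIsBoundedByBelowForDLarge} on $[d_0,+\infty)$, whereas the paper uses $\lim_{d\to+\infty}g_\gamma(d)=0$ from Corollary \ref{LimitOfg-gamma}\ref{cor-a}; either suffices.
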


\begin{proof} Suppose that $\gamma\in A$. If $d_{\gamma} \ge0$, the fact that $\gamma \in A$ and {\it (b)} of Corollary \ref{LimitOfg-gamma} imply directly that $\lim_{d \to d_{\gamma}%
}g_{\gamma}(d) = 1$.

Next, we prove the reverse implication. Using that
$g_{\gamma}$ is continuous, $g_{\gamma}(d) > -1$ for any $d \in I_{\gamma}$
and, using Corollary \ref{LimitOfg-gamma}, $\lim_{d \to+\infty}g_{\gamma}(d) = 0$, we conclude that
\[
\inf_{d > 0} g_{\gamma}(d) > -1,\ \text{if } d_\gamma <0,
\]
or
\[
\inf_{d \in I_{\gamma}} g_{\gamma}(d) > -1,\ \text{if } \lim_{d \to d_{\gamma_{0}}}
g_{\gamma_{0}}(d) = 1,
\]
proving that $\gamma\in A$. 
\end{proof}

\begin{prop}
There exists $\delta>0$, that depends only on
$d_{0}$ and $\psi$, such that $\gamma_{0} \ge-1+\delta$.
\label{gamma-zero-biggerThan-1}
\end{prop}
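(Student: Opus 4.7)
The strategy is to exhibit an explicit upper barrier $\bar g$ for $g_\gamma$ that is forced strictly below $-1$ somewhere in $(0, d_0)$ when $\gamma$ is close to $-1$, and then identify the limit of $g_\gamma$ at $d_\gamma$ as $-1$ via a maximum-principle argument on the barrier.

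First, since $\psi(s, w_\gamma(s)) \le \psi(s, 0)$ for every $s$ by property \ref{psiDefinitionc}, let $\bar g$ denote the solution on $[0, d_0]$ of the \emph{linear} Cauchy problem
$$\bar g'(d) + (n-1)\tanh(d)\,\bar g(d) = -\psi(d, 0), \qquad \bar g(d_0) = \gamma.$$
Multiplying the resulting inequality for $g_\gamma - \bar g$ by the integrating factor $(\cosh d)^{n-1}$ shows that $(\cosh d)^{n-1}(g_\gamma - \bar g)$ is nondecreasing and vanishes at $d_0$; hence $g_\gamma(d) \le \bar g(d)$ for every $d \in (\max\{d_\gamma, 0\}, d_0]$. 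Explicit integration gives $\bar g(d) = \Phi(d)/(\cosh d)^{n-1}$ with $\Phi(d) = (\cosh d_0)^{n-1}\gamma + \int_d^{d_0}(\cosh s)^{n-1}\psi(s, 0)\, ds$, so $\bar g(d) < -1$ is equivalent to $\gamma < -F(d)/(\cosh d_0)^{n-1}$, where $F(d) := (\cosh d)^{n-1} + \int_d^{d_0}(\cosh s)^{n-1}\psi(s, 0)\, ds$.

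Now $F(d_0) = (\cosh d_0)^{n-1}$ and $F'(d_0) = (\cosh d_0)^{n-1}[(n-1)\tanh d_0 - \psi(d_0, 0)] > 0$ by \eqref{d0Condition1}, so there is $d_1 \in (0, d_0)$ with $F(d_1) < (\cosh d_0)^{n-1}$. Set
$$\delta := 1 - \frac{F(d_1)}{(\cosh d_0)^{n-1}} > 0,$$
which depends only on $d_0$ and $\psi$. For $\gamma \in (-1, -1 + \delta)$ one has $\bar g(d_1) < -1$, so the comparison forces $d_\gamma \ge d_1 > 0$, since otherwise $g_\gamma(d_1) \le \bar g(d_1) < -1$ would contradict $g_\gamma > -1$ on its domain.

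It remains to identify the limit of $g_\gamma$ at $d_\gamma$. At any interior critical point $d_\ast$ of $\bar g$ in $[d_1, d_0]$, the ODE gives $\bar g(d_\ast) = -\psi(d_\ast, 0)/((n-1)\tanh d_\ast) \le 0$; together with $\bar g(d_1) < -1 < 0$ and $\bar g(d_0) = \gamma < 0$, this yields $\bar g \le 0$ throughout $[d_1, d_0]$. In particular $\bar g(d_\gamma) < 1$, so $\lim_{d \to d_\gamma^+} g_\gamma(d) \le \bar g(d_\gamma) < 1$, and since Corollary \ref{LimitOfg-gamma}(b) constrains the limit to be $\pm 1$, it must equal $-1$. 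By Lemma \ref{characterizationOFA}, $\gamma \notin A$, giving $\gamma_0 = \inf A \ge -1 + \delta$. The main subtlety is this final step: the barrier comparison already forces $d_\gamma > 0$, but ruling out the $+1$ limit at $d_\gamma$ is what requires the maximum-principle analysis of the linear ODE satisfied by $\bar g$.
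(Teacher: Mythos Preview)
Your proof is correct and takes a genuinely different route from the paper's. The paper argues via second-order information: it uses the lower bound $g_\gamma'(d_0)\ge L$ from \eqref{lowerBoundForGgammaPrimeAtD0} together with the one-sided bound $g_\gamma''\le M$ (from Lemma~\ref{boundnessOfTheDerivativesOfg}, valid where $w_\gamma'\le 0$) and a Taylor expansion to force $g_\gamma(d_0-L/M)\le -1$, which is impossible, hence $d_\gamma\ge d_0-L/M>0$; the monotonicity $g_\gamma'\ge 0$ on the relevant interval then pins the limit at $d_\gamma$ to $-1$. Your argument is purely first-order: you integrate the \emph{linear} comparison ODE (with $\psi(\cdot,0)$ on the right) explicitly, locate a point $d_1$ where the barrier $\bar g$ drops below $-1$ using only $F'(d_0)>0$, and then rule out the $+1$ limit at $d_\gamma$ by a simple maximum-principle computation showing $\bar g\le 0$ on $[d_1,d_0]$. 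What you gain is that you never touch $g_\gamma''$ or the sign of $w_\gamma'$, so Lemma~\ref{boundnessOfTheDerivativesOfg} is not needed here; what the paper's approach buys is an explicit $\delta=\min\{1/2,\,L^2/(2M)\}$ in terms of quantities already introduced, whereas your $d_1$ is chosen non-constructively from $F'(d_0)>0$.
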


\begin{proof}
First, using \eqref{lowerBoundForGgammaPrimeAtD0}, observe that there exists a constant $L>0$ such that $g_{\gamma}^{\prime}(d_{0}) \ge L$
for $\gamma\in(-1,-\frac{1}{2}]$. 
Furthermore, from Lemma \ref{boundnessOfTheDerivativesOfg}, there exists $M >L
/ d_{0}$ such that $g^{\prime\prime}_{\gamma}(d) \le M$ for any $d \in
I_{\gamma}$ satisfying $w_{\gamma}^{\prime}(d) \le0$. Next, we set
\[
\delta= \min\left\{  \frac{1}{2}, \frac{L^{2}}{2M} \right\}  .
\]
We will show that if $\gamma\in (-1, -1 + \delta )$, then $\gamma\not \in A$
proving that $\gamma_{0} > -1$. 
 First, we prove that, for $\gamma
\in(-1,-1+\delta)$,
\begin{equation}
g_{\gamma}^{\prime}(d) \ge0 \quad\mathrm{for} \quad d \in I_{\gamma}
\cap[d_{0}-L/M, d_{0}]. \label{g-gamma-primeIsNonnegativeInSomeInterval}%
\end{equation}
Suppose by contradiction that $g_{\gamma}^{\prime}(d_{1}) < 0$ for some $d_{1} \in
I_{\gamma} \cap[d_{0}-L/M, d_{0}]$. Noticing that $g_{\gamma}^{\prime}(d_{0}) \ge L >0$, since $\gamma \leq -1/2$, and using the continuity of $g_{\gamma}^{\prime}$, there
exists $d_{2} \in(d_{1},d_{0})$ such that $g_{\gamma}^{\prime}(d_{2}) = 0$ and
$g_{\gamma}^{\prime}> 0$ in $(d_{2},d_{0}]$. This implies that $g_{\gamma}(d)\le
g_{\gamma}(d_{0})=\gamma< 0$ for $d \in[d_{2},d_{0}]$ and, from
\eqref{systemForWG},
\[
w_{\gamma}^{\prime}(d) = \frac{g_{\gamma}}{\sqrt{1- g_{\gamma}^{2}}} \le0
\text{ for } \quad d \in[d_{2},d_{0}].
\]
Thus, we deduce from the definition of $M$ that $g_{\gamma}^{\prime\prime}(d) \le M$ for $d \in[d_{2},d_{0}]$. However, using the Mean Value Theorem, we
get, for some $\xi\in(d_{2},d_{0})$,
\[
L \le g_{\gamma}^{\prime}(d_{0}) - g_{\gamma}^{\prime}(d_{2}) = g^{\prime
\prime}(\xi)(d_{0}-d_{2}) < M \left(\frac{L}{M}\right)= L,
\]
proving \eqref{g-gamma-primeIsNonnegativeInSomeInterval}. Now we consider
two possibilities: either $I_{\gamma} \supseteq[d_{0}-L/M, d_{0}]$ or $I_{\gamma}
\not \supseteq [d_{0}-L/M, d_{0}]$.
Next, we rule out the first one. Suppose by contradiction that $I_{\gamma} \supseteq[d_{0}-L/M, d_{0}]$. Then, we deduce from \eqref{g-gamma-primeIsNonnegativeInSomeInterval} that $g_{\gamma}$ is increasing in
$[d_{0}-L/M, d_{0}]$ and, therefore, $g_{\gamma} \le g_{\gamma}(d_{0})=\gamma<0$ in this interval.
From \eqref{systemForWG}, we get that $w^{\prime}_{\gamma} \le0$,
which implies that $g_{\gamma}^{\prime\prime}\le M$ in $[d_{0}-L/M, d_{0}]$,
according to the definition of $M$. Then, using Taylor's expansion, we obtain that, for some $\xi\in(d_{0} -L/M,d_{0})$,
\begin{align*}
g_{\gamma}\left(  d_{0}-\frac{L}{M} \right)   &  = g_{\gamma}(d_{0}) +
g_{\gamma}^{\prime}(d_{0})\left(  -\frac{L}{M}\right)  + \frac{g_{\gamma
}^{\prime\prime}(\xi)}{2}\left(  -\frac{L}{M} \right)  ^{2}\\[5pt]
&  \le\gamma+ L \left(  -\frac{L}{M}\right)  + \frac{M}{2}\left(  \frac{L}%
{M}\right)  ^{2}\leq -1 , 
\end{align*}
which contradicts $g_{\gamma} > -1$.

Hence the second possibility must occur, that is, $d_{\gamma} \ge d_{0} -
L/M$. From $\ref{cor-b}$ of Corollary \ref{LimitOfg-gamma} and
\eqref{g-gamma-primeIsNonnegativeInSomeInterval}, we deduce that $\lim_{d
\to d_{\gamma}} g_{\gamma}(d) =-1$. Observe also that $d_{\gamma} \ge d_{0} -
L/M > 0$, since $M > L/d_{0}$. Therefore, $\inf_{d > 0, d \in I_{\gamma}}
g_{\gamma}(d)=-1$, that is, $\gamma\not \in A$. This completes the proof.

\end{proof}

\begin{prop}
We have $\gamma_{0} \not \in A$. \label{gamma-zero-notInA}
\end{prop}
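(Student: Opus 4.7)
\medskip

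The plan is to argue by contradiction: assume $\gamma_0\in A$ and produce some $\gamma<\gamma_0$ which also lies in $A$, contradicting $\gamma_0=\inf A$. The tool will be continuous dependence of the ODE \eqref{systemForWG} on the initial datum $\gamma$ on compact sets lying inside the domain $\Omega=\mathbb{R}\times\mathbb{R}\times(-1,1)$ of $F$, together with the lower bounds for $g_\gamma$ already established in Lemma \ref{gIsBoundedByBelowForDLarge} and the monotonicity behaviour in Lemma \ref{boundnessOfTheDerivativesOfg}.

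By Lemma \ref{characterizationOFA} there are two cases to dispatch. In the first case, $d_{\gamma_0}<0$; then $g_{\gamma_0}$ is continuous on $[0,d_0]$ with values in a compact subset of $(-1,1)$ (since $g_{\gamma_0}>-1$ pointwise and $g_{\gamma_0}\to0$ at $+\infty$), so standard ODE theory gives $\eta>0$ and $\epsilon>0$ such that for every $\gamma\in(\gamma_0-\eta,\gamma_0)$ the solution $(w_\gamma,g_\gamma)$ exists on $[0,d_0]$ and satisfies $g_\gamma(d)\ge -1+\epsilon$ there. For $d>d_0$ the lower bound $g_\gamma(d)>\min\{-\tfrac12,\gamma\}>-1$ from Lemma \ref{gIsBoundedByBelowForDLarge} takes over, showing $\gamma\in A$.

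In the second case, $d_{\gamma_0}\ge 0$ and $\lim_{d\to d_{\gamma_0}^+}g_{\gamma_0}(d)=1$. Pick $\tilde d\in(d_{\gamma_0},d_0)$ with $g_{\gamma_0}(\tilde d)\ge\tfrac34$; then $g_{\gamma_0}$ lives in a compact subset of $(-1,1)$ on $[\tilde d,d_0]$, so continuous dependence again furnishes $\eta>0$ so that for $\gamma\in(\gamma_0-\eta,\gamma_0)$ the solution $(w_\gamma,g_\gamma)$ is defined on $[\tilde d,d_0]$ with $g_\gamma(\tilde d)\ge\tfrac12>0$. The third bullet of Lemma \ref{boundnessOfTheDerivativesOfg} then forces $g_\gamma>0$ on all of $I_\gamma\cap(-\infty,\tilde d]$; combined with $g_\gamma>\min\{-\tfrac12,\gamma\}>-1$ on $[d_0,+\infty)$ from Lemma \ref{gIsBoundedByBelowForDLarge}, this yields $\inf_{d>0,\,d\in I_\gamma}g_\gamma(d)>-1$, i.e.\ $\gamma\in A$. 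Taking $\eta$ small enough that $\gamma_0-\eta>-1+\delta/2$ (permissible by Proposition \ref{gamma-zero-biggerThan-1}) keeps us well inside $(-1,1)$. Either way, a $\gamma<\gamma_0$ lies in $A$, contradicting the definition of $\gamma_0$.

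The one subtle step is the second case, where we must transfer information from the possibly short interval $(d_{\gamma_0},d_0]\subset I_{\gamma_0}$ to the perturbed solutions: the key is to freeze a point $\tilde d$ where $g_{\gamma_0}$ is already large and positive, use continuous dependence only on the safe compact $[\tilde d,d_0]$, and then invoke the monotonicity consequence ``$g_\gamma(\tilde d)>0\Rightarrow g_\gamma>0$ backwards'' from Lemma \ref{boundnessOfTheDerivativesOfg} to propagate positivity down to the possibly-shorter existence interval of the nearby solution. This way we never need control of $g_\gamma$ arbitrarily close to $d_\gamma$.
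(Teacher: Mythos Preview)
Your proof is correct and follows essentially the same route as the paper's: argue by contradiction, split into the two cases supplied by Lemma \ref{characterizationOFA}, and in each case use continuous dependence on a safe compact interval together with Lemmas \ref{boundnessOfTheDerivativesOfg} and \ref{gIsBoundedByBelowForDLarge} to exhibit some $\gamma<\gamma_0$ in $A$. One small omission in your write-up of the second case: you cover $(d_\gamma,\tilde d]$ (positivity) and $[d_0,\infty)$ (Lemma \ref{gIsBoundedByBelowForDLarge}) but should also record that continuous dependence keeps $g_\gamma$ bounded away from $-1$ on the intermediate interval $[\tilde d,d_0]$---the paper does this by taking the approximation tolerance to be $\min\{1/4,\varepsilon\}$ with $\varepsilon=\tfrac12(1+\inf_{d>0}g_{\gamma_0})$.
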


\begin{proof} Suppose by contradiction that $\gamma_{0} \in A$. If $d_{\gamma_{0}} <
0$, then $(w_{\gamma_{0}}, g_{\gamma_{0}})$ is defined in $[0,d_{0}]$. For
$\gamma< \gamma_{0}$ sufficiently close to $\gamma_{0}$, due to the continuous
dependence of solutions with respect to the initial conditions, it follows that
\[
| g_{\gamma}(d) - g_{\gamma_{0}}(d) | < \varepsilon\quad\mathrm{for} \quad d
\in[0,d_{0}],
\]
for $$\varepsilon= \displaystyle \frac{ 1 + \inf_{d > 0} g_{\gamma_{0}}(d)}{
2} > 0. $$
Thanks to our choice of $\varepsilon$, we get that $g_{\gamma}(d) > -1 + \varepsilon$ for $d \in[0,d_{0}]$.
Combining this fact with Lemma \ref{gIsBoundedByBelowForDLarge}, we deduce that
\[
\inf_{d > 0} g_{\gamma}(d) \ge\min\left\{  - \frac{1}{2}, \gamma, -1 +
\varepsilon\right\}  > -1.
\]
Hence, $\gamma\in A$ contradicting that $\gamma_{0} = \inf A$. 

Next, let us consider the case $d_{\gamma_{0}} \ge0$. Using Lemma
\ref{characterizationOFA}, we have in this case that $\lim_{d \to d_{\gamma_{0}}}
g_{\gamma_{0}}(d) =1$. Therefore, for some $d_{1} \in(d_{\gamma_{0}}, d_{0})$,
we have
\begin{equation}
g_{\gamma_{0}}(d) > 1/2 \quad \text{ for }  d \in(d_{\gamma_{0}},d_{1}].
\label{gamma-0-d1BiggerThan1-2}%
\end{equation}
Again, using the continuous dependence of solutions on initial conditions, we
get, for $\gamma <\gamma_0$ sufficiently close to $\gamma_0$,
\[
| g_{\gamma}(d) - g_{\gamma_{0}}(d) | <  \min \left\{ 1/4, \varepsilon\right\} , \text{ for }  d \in[d_{1},d_{0}].
\]
 As before, this implies that  $g_{\gamma}(d) > -1+\varepsilon$ for $d \in[d_{1},d_{0}]$ and, together with \eqref{gamma-0-d1BiggerThan1-2}, we conclude that $g_{\gamma}(d_1) > 0$. Hence, from Lemma \ref{boundnessOfTheDerivativesOfg}, we have
that $g_{\gamma}(d) > 0$ for $d \le d_{1}$, $d \in I_{\gamma}$. On the other hand, using 
Lemma \ref{gIsBoundedByBelowForDLarge}, we see that $g_{\gamma} \ge\min\{-1/2, \gamma\}$ in
$[d_{0},+\infty)$. Therefore, we obtain that $\inf_{d \in I_{\gamma}}  g_{\gamma} > -1$.  
Thus, $\gamma\in A$, which contradicts $\gamma_{0}=\inf A$. 
 
\end{proof}

We are now in position to prove that $\gamma_0$ is such that $I_{\gamma_0}=\R^+$.

\begin{thm}
Let $d_{0} >0$ be as in Lemma \ref{gIsBoundedByBelowForDLarge} and
$\gamma_{0}$ defined by \eqref{gammaoDefinition}. Then, $\gamma_{0} < 0$,
$I_{\gamma_{0}} = (0,+\infty)$ (that is, $d_{\gamma_{0}}=0$) and
\[
\lim_{d \to0} g_{\gamma_{0}}(d) = -1.
\]
\label{g-Gamma0IsSol}
\end{thm}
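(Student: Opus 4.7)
My plan is to verify the three assertions in order, as each one builds on the previous.

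\textbf{First, $\gamma_0 < 0$.} I would apply Lemma \ref{gIsBoundedByBelowForDLarge} with $\gamma = 0$ to get $g_0(d) \ge -1/2$ on $(d_0, +\infty) \cap I_0$ and $g_0(d) \ge 0$ on $[0, d_0] \cap I_0$. Hence $\inf_{d>0, d \in I_0} g_0(d) \ge -1/2 > -1$, so $0 \in A$ and $\gamma_0 = \inf A \le 0$. Since $\gamma_0 \notin A$ by Proposition \ref{gamma-zero-notInA} while $0 \in A$, it follows that $\gamma_0 \ne 0$, hence $\gamma_0 < 0$.

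\textbf{Second, the limit at $d_{\gamma_0}$ (once known to be finite) equals $-1$.} Because $\gamma_0 \notin A$, Lemma \ref{characterizationOFA} forces $d_{\gamma_0} \ge 0$ (in particular finite) together with $\lim_{d \to d_{\gamma_0}^+} g_{\gamma_0}(d) \ne 1$. By Corollary \ref{LimitOfg-gamma}(b), since $d_{\gamma_0} \ne -\infty$, this limit must be $\pm 1$, so it equals $-1$.

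\textbf{Third and most delicately, $d_{\gamma_0} = 0$.} We already have $d_{\gamma_0} \ge 0$; I would argue by contradiction, supposing $d_{\gamma_0} > 0$. Corollary \ref{LimitOfg-gamma}(d) then gives $g_{\gamma_0} < 0$ throughout $I_{\gamma_0}$. Since $\gamma_0 = \inf A$ but $\gamma_0 \notin A$, I can pick a sequence $\gamma_n \in A$ with $\gamma_n \downarrow \gamma_0$. Choosing $d_1 \in (d_{\gamma_0}, d_0)$ with $g_{\gamma_0}(d_1)$ arbitrarily close to $-1$ (possible since $g_{\gamma_0}(d) \to -1$ as $d \to d_{\gamma_0}^+$), continuous dependence of ODE solutions on the initial datum $\gamma$ yields $g_{\gamma_n}(d_1) \to g_{\gamma_0}(d_1)$, so $g_{\gamma_n}(d_1) < -1/2$ for $n$ large. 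The target is to show that the backward trajectory of each such $g_{\gamma_n}$ must reach the value $-1$ at some finite $d_{\gamma_n} > 0$; this forces $\lim_{d \to d_{\gamma_n}^+} g_{\gamma_n}(d) = -1 \ne 1$ with $d_{\gamma_n} > 0$, contradicting $\gamma_n \in A$ via Lemma \ref{characterizationOFA}.

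The main obstacle is this last step, because continuous dependence only controls $g_{\gamma_n}$ on compact subsets of $(d_{\gamma_0}, +\infty)$, not for $d$ below $d_{\gamma_0}$. My plan is to exploit transversality: evaluating the $g$-equation as $d \to d_{\gamma_0}^+$ gives $g'_{\gamma_0}(d_{\gamma_0}^+) = (n-1)\tanh(d_{\gamma_0}) - \psi(d_{\gamma_0}, w_{\gamma_0}(d_{\gamma_0}^+))$, and this must be $\ge 0$ for $g_{\gamma_0}$ to approach $-1$ from above at $d_{\gamma_0}$. If it is strictly positive, the hit is transversal and standard ODE theory gives continuity of the exit time, so $d_{\gamma_n} \to d_{\gamma_0} > 0$, producing the contradiction. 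The degenerate tangential case requires a finer argument combining the monotonicity $((\cosh d)^{n-1} g_\gamma)' = -(\cosh d)^{n-1}\psi(d, w_\gamma) \le 0$ derived in the proof of Lemma \ref{boundnessOfTheDerivativesOfg} with the asymptotic properties of $\psi$ from Proposition \ref{prop-Psi}, in order to still force $d_{\gamma_n} > 0$ for large $n$.
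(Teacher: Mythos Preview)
Your first two steps ($\gamma_0<0$ and $\lim_{d\to d_{\gamma_0}}g_{\gamma_0}(d)=-1$) are correct and match the paper's argument exactly.

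The third step contains a genuine gap. Your appeal to ``continuity of the exit time'' in the transversal case is not a standard ODE fact here: the vector field $F(d,w,g)$ has its $w$-component $g/\sqrt{1-g^2}$ blowing up at the boundary $g=-1$, so the usual results on continuous dependence of exit times from a regular boundary do not apply directly. What \emph{is} standard is lower semicontinuity of the maximal interval, which only gives $\limsup d_{\gamma_n}\le d_{\gamma_0}$---precisely the wrong inequality for your purposes. One can in fact push the transversal case through by a direct second-order Taylor argument in the style of Proposition~\ref{gamma-zero-biggerThan-1} (using the one-sided bound $g''_{\gamma_n}\le M$ where $w'_{\gamma_n}\le 0$), but you have not done this. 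More seriously, your tangential case $g'_{\gamma_0}(d_{\gamma_0}^+)=0$ is left entirely open: the monotonicity of $(\cosh d)^{n-1}g_\gamma$ gives a \emph{lower} bound on $g_{\gamma_n}$ for $d<d_1$, which goes in the wrong direction for forcing $g_{\gamma_n}\to -1$ at a positive $d$, and you give no indication how the asymptotic properties of $\psi$ would close the argument.

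The paper takes a quite different route for $d_{\gamma_0}=0$. Rather than trying to show the $g_{\gamma_k}$ terminate near $d_{\gamma_0}$, it shows the \emph{opposite}: using $\gamma_k\in A$ and the uniform bound on $|g'_{\gamma_k}|$, the solutions $(w_{\gamma_k},g_{\gamma_k})$ extend to an interval $(d_{\gamma_0}-\delta,+\infty)$ with $g_{\gamma_k}<0$ there. Arzel\`a--Ascoli then gives $g_{\gamma_k}(d_{\gamma_0})\to -1$, and a Taylor argument (again using $g''_{\gamma_k}\le M$) forces the subsequential limit of $g'_{\gamma_k}(d_{\gamma_0})$ to be $0$. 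This in turn implies $w_{\gamma_k}(d_{\gamma_0})\to +\infty$. The contradiction finally comes from evaluating the $g$-equation at $d_{\gamma_0}$ along the sequence and using property~\ref{psiDefinitionef} of $\psi$ (namely $\psi(d,t)\to 0$ as $t\to+\infty$) to conclude $(n-1)\tanh(d_{\gamma_0})=0$. This decay property of $\psi$ in the $t$-variable is the key analytic input you are missing; without it the tangential scenario cannot be ruled out.
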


\begin{proof} Recalling that $0\in A$, we deduce, from Propositions \ref{gamma-zero-notInA} and \ref{gamma-zero-biggerThan-1}, that $-1<\gamma_0<0$, and, from Lemma \ref{characterizationOFA} and Corollary \ref{LimitOfg-gamma}, that $d_{\gamma_{0}} \ge0$ and $\lim_{d \to d_{\gamma_{0}}}
g_{\gamma_{0}}(d) = -1.$ Thus, it only remains to prove that $d_{\gamma_{0}}=0$. Suppose by contradiction that $d_{\gamma_{0}} >
0$. Let $\gamma_{k} \in A$ such that $\gamma_{k} < 0$ and $\gamma_{k}
\to\gamma_{0}$. Since the proof is quite lenghty, we split it into three claims.

\ 

\noindent\textsl{Claim 1:} There exists $\delta> 0$ such that, for all $k$ large enough, $(w_{\gamma_{k}%
}, g_{\gamma_{k}})$ is defined in $(d_{\gamma_{0}}-\delta,+\infty)$. Furthermore, $g_{\gamma_{k}} < 0$ in $(d_{\gamma_{0}}-\delta,+\infty)$.

\ 

\noindent\textsl{Proof of Claim 1:} 
Let $D:=n-1+ \max \psi$. We recall from Lemma \ref{boundnessOfTheDerivativesOfg} that $|g_{\gamma_k}^\prime|\leq D$. Next, we set
\[
\delta= \min\{ 1/(4D), d_{\gamma_{0}} \}.
\]
Since $\lim_{d \to d_{\gamma_{0}}} g_{\gamma_{0}}(d) = -1$, there exists $d_{\gamma_0}<d^{*} <
d_{\gamma_{0}}+\delta$ such that $g_{\gamma
_{0}}(d^{*}) < -3/4$. Observe now that from the continuous dependence of
solutions on initial conditions, for $\gamma$ close to $\gamma_0$, it follows that $ [d^{*},d_{0}] \subseteq I_\gamma $ and
\[
| g_{\gamma}(d) - g_{\gamma_{0}}(d) | < 1/4 \text{ for } d
\in[d^{*},d_{0}].
\]
 Hence, using that $\gamma_{k} \to
\gamma_{0}$, we have that $I_{\gamma_{k}}=(d_{\gamma_{k}},+\infty)
\supseteq[d^{*},d_{0}]$ and $-1< g_{\gamma_{k}}(d^{*}) < -1/2$ for $k$ large enough.

Let $d \in(d_{\gamma_{0}} - \delta, d^{*}] \cap I_{\gamma_{k}}$ and notice that $0<
d^{*}-d <(d_{\gamma_{0}} +\delta) - (d_{\gamma_{0}} - \delta)= 2 \delta$. So doing a Taylor's expansion and recalling the definition of $D$, we get that
\[
g_{\gamma_{k}}(d) \le g_{\gamma_{k}}(d^{*}) + D |d^{*} - d| < -\frac{1}{2} + 2
\delta\; D \le-\frac{1}{2} + 2 \frac{1}{4D} D \le0.
\]
Then, for $k$ large enough, we have shown that
\begin{equation}
g_{\gamma_{k}}(d) < 0 \quad\mathrm{for } \quad d \in(d_{\gamma_{0}} - \delta,
d^{*}] \cap I_{\gamma_{k}}. \label{negativityOfG-gamma-k}%
\end{equation}
Suppose now that, for $k$ large enough, $  (d_{\gamma_{0}}-\delta,
+\infty) \not \subseteq I_{\gamma_{k}}$, i.e. $d_{\gamma_{0}}-\delta< d_{\gamma_{k}} < d_{0}$. Using that $\delta\le d_{\gamma_{0}}$, we have
$d_{\gamma_{k}} > 0$. From this and $\gamma_{k} \in A$, Lemma
\ref{characterizationOFA} implies that $\lim_{d \to d_{\gamma_{k}}}
g_{\gamma_{k}}(d) = 1.$ However, this contradicts
\eqref{negativityOfG-gamma-k}. Then $(d_{\gamma_{0}%
}-\delta, +\infty)  \subseteq I_{\gamma_{k}}$ and $g_{\gamma_{k}} < 0$ in $(d_{\gamma_{0}} - \delta,
d^{*}]$. Hence, the last statement of Lemma \ref{boundnessOfTheDerivativesOfg}
implies that $g_{\gamma_{k}} < 0$ in $(d_{\gamma_{0}} - \delta, +\infty)$
proving the claim. 

\ 

From now on, we only consider $k$ for which this claim holds. Observe also
that $|g_{\gamma_{k}}| < 1$ and from Lemma
\ref{boundnessOfTheDerivativesOfg},  we have that $|g_{\gamma_{k}}^{\prime}| <
D$ in $I_{\gamma_{k}} \supset(d_{\gamma_{0}}-\delta, +\infty)$. Then, applying Arzel\`a-Ascoli Theorem, there exists a subsequence,
which we denote by $g_{\gamma_{k}}$, that converges uniformly,  in $[d_{\gamma_{0}},d_{0}]$, to some
continuous function $g$. On the other hand, from the continuous
dependence of solutions on initial conditions and $\gamma_{k} \to\gamma_{0}$,
we get that $g_{\gamma_{k}} \to g_{\gamma_{0}}$ uniformly in compact subsets
of $(d_{\gamma_{0}}, d_{0}]$. Therefore, we deduce that $g=g_{\gamma_{0}}$ in $(d_{\gamma_{0}%
},d_{0}]$ and $g(d_{\gamma_{0}})= \lim_{d \to d_{\gamma_{0}}}g_{\gamma_{0}}(d)
= -1$ which implies that
\begin{equation}
\lim_{k \to+\infty} g_{\gamma_{k}}(d_{\gamma_{0}}) = -1.
\label{limitOfGkAtD-gamma-0}%
\end{equation}
Moreover, since $g_{\gamma_{k}}^{\prime}(d_{\gamma_{0}})$ is bounded from
Lemma \ref{boundnessOfTheDerivativesOfg}, there exists $\alpha \in \R$ such that, up to a subsequence, we have
\begin{equation}
\lim_{k \to+\infty} g_{\gamma_{k}}^{\prime}(d_{\gamma_{0}}) = \alpha .
\label{limitOfgkPrimeAtD-gamma-0}%
\end{equation}

\ 

\noindent\textsl{Claim 2:} We have $\alpha= 0$.

\ 

\noindent  \textsl{Proof of Claim 2:}  Suppose that $\alpha> 0$, the case $\alpha<0$ follows in the same way. We follow the same idea as in Proposition
\ref{gamma-zero-biggerThan-1}. First, observe that $g_{\gamma_{k}} < 0$ in
$(d_{\gamma_{0}} -\delta, +\infty)$ from Claim 1. Hence $w_{\gamma_{k}%
}^{\prime}< 0$ in this interval from \eqref{systemForWG} and, therefore, Lemma
\ref{boundnessOfTheDerivativesOfg} implies that there exists $M > 0$ such
that
\[
g_{\gamma_{k}}^{\prime\prime}\le M \text{ in }(d_{\gamma_{0}}
-\delta, +\infty).
\]
Now let $d \in(d_{\gamma_{0}}-\alpha /2M, d_{\gamma_{0}})$ and define $\rho=d_{\gamma_{0}} - d >0$. From
\eqref{limitOfGkAtD-gamma-0} and \eqref{limitOfgkPrimeAtD-gamma-0}, there
exists $k$ such that
\[
g_{\gamma_{k}}(d_{\gamma_{0}}) < -1 + \frac{\alpha\rho}{4} \text{ and }
 g_{\gamma_{k}}^{\prime}(d_{\gamma_{0}}) > \frac{\alpha}{2} .
\]
Hence, using a Taylor's expansion, we have, for some
$\xi\in(d,d_{\gamma_{0}})$,
\begin{align*}
g_{\gamma_{k}}\left(  d \right)   &  = g_{\gamma_{k}}(d_{\gamma_{0}}) +
g_{\gamma_{k}}^{\prime}(d_{\gamma_{0}})\left(  -\rho\right)  + \frac
{g_{\gamma_{k}}^{\prime\prime}(\xi)}{2}\left(  -\rho\right)  ^{2}\\[5pt]
&  \le-1 + \frac{\alpha\rho}{4} - \frac{\alpha}{2} \rho+ \frac{M}{2} \rho
^{2}<-1
\end{align*}
contradicting $g_{\gamma_{k}} > -1$. This proves the claim.

\ 

\noindent\textsl{Claim 3:} There holds $\displaystyle \lim_{k \to\infty} w_{\gamma_{k}%
}(d_{\gamma_{0}}) = +\infty$.

\ 

\noindent  \textsl{Proof of Claim 3:} Let $M$ as in Claim 2 and $d^{*} \in(d_{\gamma_{0}}, d_{0})$ such
that $(d^{*}-d_{\gamma_{0}})^{2} < \min\{1,1/2M\}$. Given $0 < \varepsilon<
1/4$, we deduce from \eqref{limitOfGkAtD-gamma-0}, \eqref{limitOfgkPrimeAtD-gamma-0} and
$\alpha=0$, that, for $k$ large enough,
\[
g_{\gamma_{k}}(d_{\gamma_{0}}) < -1 + \frac{\varepsilon}{2} \text{ and }
 g_{\gamma_{k}}^{\prime}(d_{\gamma_{0}}) < \frac{\varepsilon}{2}.
\]
Using a Taylor's expansion again for $d \in[d_{\gamma_{0}}, d^{*}]$, there exists
$\xi\in(d_{\gamma_{0}},d)$ such that
\begin{align*}
g_{\gamma_{k}}\left(  d \right)   &  = g_{\gamma_{k}}(d_{\gamma_{0}}) +
g_{\gamma_{k}}^{\prime}(d_{\gamma_{0}})\left(  d-d_{\gamma_{0}} \right)  +
\frac{g_{\gamma_{k}}^{\prime\prime}(\xi)}{2}\left(  d-d_{\gamma_{0}} \right)
^{2}\\[5pt]
&  < -1 + \varepsilon + \frac{M}{2}
\left(  d-d_{\gamma_{0}} \right)  ^{2}\leq - 1/2, 
\end{align*}
and, therefore,
$$1-(g_{\gamma_{k}}(d))^{2}
\le 4[\varepsilon+ M(d-d_{\gamma_{0}})^{2}].$$
 Then, using that $g_{\gamma_{k}} < 0$ in
$(d_{\gamma_{0}}-\delta, +\infty)$ and $d^{*} < d_{0}$, we get
\begin{align*}
\int_{d_{0}}^{d_{\gamma_{0}}} \!\!\frac{g_{\gamma_{k}}(t)}{\sqrt
{1-(g_{\gamma_{k}}(t))^{2}}}\, dt  &  = \int_{d_{\gamma_{0}}}^{d_{0}}
\!\!\frac{- g_{\gamma_{k}}(t)}{\sqrt{1-(g_{\gamma_{k}}(t))^{2}}} \, dt\\[5pt]
&  \ge\int_{d_{\gamma_{0}}}^{d^{*}} \! \!\frac{- g_{\gamma_{k}}(t)}%
{\sqrt{1-(g_{\gamma_{k}}(t))^{2}}} \, dt\\[5pt]
&  \ge\frac{1}{4} \int_{d_{\gamma_{0}}}^{d^{*}}\!\! \frac{1}{\sqrt
{\varepsilon+ M(t-d_{\gamma_{0}})^{2}}} \, dt.
\end{align*}
Since the last expression goes to infinity as $\varepsilon\to0$, we conclude
that
\[
\lim_{k \to\infty} \int_{d_{0}}^{d_{\gamma_{0}}} \!\!\frac{g_{\gamma_{k}}%
(t)}{\sqrt{1-(g_{\gamma_{k}}(t))^{2}}}\, dt =+\infty.
\]
Then, from \eqref{wExpressionByG}, it follows that $w_{\gamma_{k}}%
(d_{\gamma_{0}}) \to+\infty$ as $k \to\infty$, proving the claim. 

\ 

We are now in position to complete the proof of the theorem.\\

\noindent  \textsl{End of the proof of Theorem \ref{g-Gamma0IsSol}:} Remind that, from
\eqref{systemForWG}, we have
\[
g_{\gamma_{k}}^{\prime}(d_{\gamma_{0}}) = -(n-1)\tanh(d_{\gamma_{0}}) \;
g_{\gamma_{k}}(d_{\gamma_{0}}) - \psi(d_{\gamma_{0}}, w_{\gamma_{k}}(d_{\gamma_{0}})).
\]
Now, observe that Claim $3$ and \ref{psiDefinitionef} implies that $\lim_{k \to\infty} \psi(d_{\gamma_{0}}, w_{\gamma_{k}}(d_{\gamma_{0}})) =0$. Hence, letting $k \to\infty$ in this equation and using \eqref{limitOfGkAtD-gamma-0},
\eqref{limitOfgkPrimeAtD-gamma-0} and Claim 2, we conclude that
\[
(n-1)\tanh(d_{\gamma_{0}}) =0.
\]
However, this contradicts the assumption $d_{\gamma_{0}} > 0$. Therefore,
$d_{\gamma_{0}}=0$, completing the proof. 
\end{proof}

Next, we prove the uniqueness of such $\gamma_0$ in the following sense:

\begin{prop}
If $\gamma\in(-1,1)$ is such that $d_{\gamma}=0$ and $\lim_{d \to0}g_{\gamma
}(d)=-1$, then $\gamma=\gamma_{0}$. In other words, $\gamma_{0}$ is the unique
$\gamma$ such that $I_{\gamma}=(0,+\infty)$ and $g_{\gamma}(0^{+})=-1$.
\label{uniquenessOfGamma-0}
\end{prop}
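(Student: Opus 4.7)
My plan is an argument by contradiction that exploits the first-order identity \eqref{detivativeOfProductGammaCosh} for $(\cosh d)^{n-1}g_\gamma$. Suppose there exists some $\gamma\ne\gamma_0$ in $(-1,1)$ with $d_\gamma=0$ and $\lim_{d\to 0^+}g_\gamma(d)=-1$. By Theorem \ref{g-Gamma0IsSol}, $\gamma_0$ has the same property, so after relabeling we have $\gamma_1<\gamma_2$, both satisfying $I_{\gamma_i}=(0,+\infty)$ and $g_{\gamma_i}(0^+)=-1$. I set up the \emph{weighted gap}
$$\phi(d):=(\cosh d)^{n-1}\bigl(g_{\gamma_2}(d)-g_{\gamma_1}(d)\bigr),\qquad d\in(0,d_0],$$
for which \eqref{detivativeOfProductGammaCosh} produces the clean identity
$$\phi'(d)=-(\cosh d)^{n-1}\bigl[\psi(d,w_{\gamma_2}(d))-\psi(d,w_{\gamma_1}(d))\bigr].$$
The two ``endpoint'' values are immediate: $\phi(d_0)=(\cosh d_0)^{n-1}(\gamma_2-\gamma_1)>0$, while $\phi(d)\to 0$ as $d\to 0^+$ since both $g_{\gamma_i}\to -1$.

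The crux is to show that $\phi$ is \emph{nonincreasing} on $(0,d_0]$: once this is established, $\phi(d)\ge\phi(d_0)>0$ for all $d\in(0,d_0]$, incompatible with $\phi(0^+)=0$. To obtain the monotonicity I introduce
$$T:=\inf\{d\in(0,d_0]\,:\,g_{\gamma_2}>g_{\gamma_1}\text{ on }[d,d_0]\}.$$
On $(T,d_0]$, since $t\mapsto t/\sqrt{1-t^2}$ is strictly increasing on $(-1,1)$, the first equation of \eqref{systemForWG} yields $w'_{\gamma_2}>w'_{\gamma_1}$; integrating backwards from $d_0$, where $w_{\gamma_1}(d_0)=w_{\gamma_2}(d_0)=h$, gives $w_{\gamma_1}(d)>w_{\gamma_2}(d)$ for $d\in[T,d_0)$. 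Using that $\psi(d,\cdot)$ is nonincreasing on all of $\mathbb{R}$ by property \ref{psiDefinitionc} (constant for $t\le 0$, decreasing for $t\ge 0$), we deduce $\psi(d,w_{\gamma_2})\ge\psi(d,w_{\gamma_1})$, hence $\phi'\le 0$ on $[T,d_0)$, independently of the signs of the $w_{\gamma_i}$.

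To conclude I split into two cases. If $T=0$, the monotonicity applies on all of $(0,d_0]$ and the contradiction with $\phi(0^+)=0$ is immediate. If $T>0$, continuity of the $g_{\gamma_i}$ and minimality of $T$ force $g_{\gamma_2}(T)=g_{\gamma_1}(T)$, i.e.\ $\phi(T)=0$; but $\phi$ being nonincreasing on $[T,d_0]$ gives $\phi(T)\ge\phi(d_0)>0$, the same contradiction. The step I expect to be the main obstacle is precisely the careful bookkeeping around $T$ that is needed to keep $g_{\gamma_2}>g_{\gamma_1}$, and therefore $w_{\gamma_1}>w_{\gamma_2}$, available on the relevant interval; once that is set up, the fact that $\psi(d,\cdot)$ is monotone on \emph{all} of $\mathbb{R}$ is exactly what makes the argument go through without having to track the sign of the $w_{\gamma_i}$.
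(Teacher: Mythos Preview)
Your proof is correct and follows essentially the same route as the paper: both arguments show that the strict ordering $g_{\gamma_2}>g_{\gamma_1}$ at $d_0$ propagates all the way down (via the induced ordering $w_{\gamma_2}<w_{\gamma_1}$ and the monotonicity of $\psi(d,\cdot)$), which is incompatible with the common limit $-1$ at $d=0^+$. The only cosmetic difference is that you package the comparison through the weighted gap $\phi=(\cosh d)^{n-1}(g_{\gamma_2}-g_{\gamma_1})$ and its sign via \eqref{detivativeOfProductGammaCosh}, whereas the paper compares $g_{\gamma_1}'$ and $g_{\gamma_0}'$ directly from \eqref{systemForWG}; the $\tanh$ term that gives the paper a \emph{strict} inequality is absorbed into your weight, but your non-strict $\phi'\le 0$ is already enough for the contradiction.
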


\begin{proof}
The proof follows by contradiction. Assume that there
exists $\gamma_{1} \ne\gamma_{0}$ such that $d_{\gamma_{1}} =0$. Suppose without loss of generality
that $\gamma_{1} > \gamma_{0}$. By definition, we have that $g_{\gamma_{1}}(d_{0}) =
\gamma_{1} > \gamma_{0} = g_{\gamma_{0}}(d_{0})$. From the continuity of
$g_{\gamma_{i}}$, it follows that $g_{\gamma_{1}} > g_{\gamma_{0}}$ in some
neighborhood of $d_{0}$. Let $J$ be the largest open interval contained in
$(0,+\infty)$ such that $d_{0} \in J$ and $g_{\gamma_{1}} > g_{\gamma_{0}}$ in
$J$. Using that the application $z \mapsto z/\sqrt{1-z^{2}}$ is
increasing on $(-1,1)$, we obtain
\[
\frac{g_{\gamma_{1}}(t)}{\sqrt{1-g_{\gamma_{1}}^{2}(t)}} > \frac{g_{\gamma
_{0}}(t)}{\sqrt{1-g_{\gamma_{0}}^{2}(t)}} \text{ for }  t \in J.
\]
Hence, from \eqref{wExpressionByG}, we conclude that $w_{\gamma_{1}}(d) <
w_{\gamma_{0}}(d)$ if $d < d_{0}$ and $d \in J$. Then, using
\eqref{systemForWG} and \ref{psiDefinitionc}, we have
\begin{align}
g_{\gamma_{1}}^{\prime}(d)  &  = -(n-1)\tanh(d) \; g_{\gamma_{1}}(d) - \psi(d,
w_{\gamma_{1}}(d))\nonumber\\[5pt]
&  < -(n-1)\tanh(d) \; g_{\gamma_{0}}(d) - \psi(d, w_{\gamma_{0}}(d)) =
g_{\gamma_{0}}^{\prime}(d) , \label{g-gamma-1SmallerThanG-gamma-0}%
\end{align}
for $d < d_{0}$ ($d \in J$).

Let $d^{*}$ be the left endpoint of the interval of $J$. Thus $0 \le d^{*} <
d_{0}$. If $d^{*} > 0$, then $g_{\gamma_{0}}$ and $g_{\gamma_{1}}$ are defined
at $d^{*}$ and, therefore, $g_{\gamma_{1}}(d^{*})=g_{\gamma_{0}}(d^{*})$ due
to the definition of $J$. Since $g_{\gamma_{1}}(d^{*})=g_{\gamma_{0}}(d^{*})$
and $g_{\gamma_{1}} > g_{\gamma_{0}}$ in $J$, there exists some $\xi\in
(d^{*},d_{0})$ such that $g_{\gamma_{1}}^{\prime}(\xi) > g_{\gamma_{0}%
}^{\prime}(\xi)$. But, this contradicts \eqref{g-gamma-1SmallerThanG-gamma-0}.
Then $d^{*}=0$. Hence, we have $g_{\gamma_{1}}(0^{+})=-1=g_{\gamma_{0}}%
(0^{+})$ and $g_{\gamma_{1}} > g_{\gamma_{0}}$ in $(0,d_{0}]$. As before, we
get a contradiction with \eqref{g-gamma-1SmallerThanG-gamma-0}. 
\end{proof}

We are also able to prove that $w_{\gamma_0}(d)$ blows up when $d\rightarrow 0^+$ and therefore it satisfies the boundary condition on $S$ of \eqref{superScherkEDP}.

\begin{lem}
Let $\gamma_0$ be defined as in \eqref{gammaoDefinition}. Then, we have
\[
\lim_{d\rightarrow0}w_{\gamma_{0}}(d)=+\infty.
\]
\label{w-omega-goes-to-infinity-at-0}
\end{lem}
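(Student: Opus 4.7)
The goal is to show that $w_{\gamma_0}(d) \to +\infty$ as $d \to 0^+$. Since by Theorem \ref{g-Gamma0IsSol} we have $g_{\gamma_0}(0^+) = -1$ and by Corollary \ref{LimitOfg-gamma}\ref{cor-d} $g_{\gamma_0} < 0$ on all of $(0,+\infty)$, the formula \eqref{wExpressionByG} can be rewritten as
$$w_{\gamma_0}(d) = h + \int_d^{d_0} \frac{-g_{\gamma_0}(t)}{\sqrt{1-g_{\gamma_0}^2(t)}}\, dt, \quad 0 < d < d_0,$$
with a \emph{positive} integrand. The plan is thus to show that this integral diverges to $+\infty$ as $d \to 0^+$, i.e.\ that the apparent singularity at $t=0$ is non-integrable.

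The key quantitative step is to quantify how fast $g_{\gamma_0}(d)$ approaches $-1$. I would integrate the identity \eqref{detivativeOfProductGammaCosh} from $0^+$ to $d$, using $(\cosh s)^{n-1} g_{\gamma_0}(s) \to -1$ as $s \to 0^+$, to obtain
$$(\cosh d)^{n-1} g_{\gamma_0}(d) + 1 = -\int_0^d (\cosh s)^{n-1}\psi(s, w_{\gamma_0}(s))\, ds.$$
Since $\psi \ge 0$ and $(\cosh d)^{n-1} = 1 + O(d^2)$ near the origin, rearranging immediately gives the upper bound
$$0 < 1 + g_{\gamma_0}(d) \le \frac{(\cosh d)^{n-1}-1}{(\cosh d)^{n-1}} \le C\, d^2$$
for some constant $C=C(n)$ and all sufficiently small $d > 0$. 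This is the main input.

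From here the conclusion is routine. Since $g_{\gamma_0}(d) \to -1$, we have $-g_{\gamma_0}(d) \ge 1/2$ and $1 - g_{\gamma_0}(d) \le 3$ for $d$ in some small interval $(0,\delta]$, so
$$1 - g_{\gamma_0}(d)^2 = \bigl(1-g_{\gamma_0}(d)\bigr)\bigl(1+g_{\gamma_0}(d)\bigr) \le 3C\, d^2$$
on $(0,\delta]$. Hence $-g_{\gamma_0}(t)/\sqrt{1-g_{\gamma_0}^2(t)} \ge 1/(2\sqrt{3C}\,t)$ on that interval, and letting $d \to 0^+$ in
$$w_{\gamma_0}(d) \ge h + \int_d^{\delta} \frac{1}{2\sqrt{3C}\, t}\, dt = h + \frac{1}{2\sqrt{3C}}\,\log(\delta/d)$$
yields $w_{\gamma_0}(d) \to +\infty$.

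There is no real obstacle in this argument: the upper bound $1 + g_{\gamma_0}(d) = O(d^2)$, which comes for free from the sign $\psi \ge 0$ combined with the shape $\cosh d = 1 + O(d^2)$, sits exactly at the threshold that forces the integral $\int \bigl(1-g_{\gamma_0}^2\bigr)^{-1/2} dt$ to diverge logarithmically rather than converge. The proof uses nothing beyond the conservation form \eqref{detivativeOfProductGammaCosh} of the ODE, the sign of $\psi$, and the previously established boundary behavior $g_{\gamma_0}(0^+) = -1$.
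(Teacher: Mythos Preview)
Your proof is correct and follows essentially the same approach as the paper: both derive the key inequality $g_{\gamma_0}(d)\cosh^{n-1}(d)\le -1$ from the monotonicity identity \eqref{detivativeOfProductGammaCosh} and the boundary value $g_{\gamma_0}(0^+)=-1$, and then use it to force the integral for $w_{\gamma_0}$ to diverge at $0$. The only cosmetic difference is that the paper packages the final step as a comparison with the explicit ordinary Scherk solution $G(d)=-1/\cosh^{n-1}(d)$, $W(d)=h-\int_{d_0}^d (\cosh^{2n-2}t-1)^{-1/2}\,dt$, whereas you translate the same bound directly into $1-g_{\gamma_0}^2=O(d^2)$ and estimate the integral by hand.
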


\begin{proof} This lemma will follow by comparing our solution to an ordinary Scherk graph namely  a solution of
\begin{equation}
\begin{cases}
W^{\prime}(d) = \displaystyle \frac{G(d)}{\sqrt{1-G^{2}(d)}}\\[10pt]
G^{\prime}(d) = -(n-1)\tanh(d) \; G(d),
\end{cases}  \label{systemForSomeScherkInB}%
\end{equation}
with initial conditions
\[
W(d_{0})=h \text{ and } G(d_{0})= \frac{-1}{\cosh^{n-1}(d_{0})}.
\]
Observe that the previous system can be solved explicitly by
\begin{equation}
G(d) = \frac{-1}{\cosh^{n-1}(d)}\text{ and } W(d) = h -
\int_{d_{0}}^{d} \frac{1}{\sqrt{\cosh^{2n-2}(t) - 1}}\, dt.
\label{ExpressionForScherkInB}%
\end{equation}
 Also notice that
\[
(G (d)\cosh^{n-1}(d))^{\prime}= 0 \quad\mathrm{in} \quad(0,+\infty).
\]
On the other hand, from
\eqref{detivativeOfProductGammaCosh}, we have
\[
(g_{\gamma_{0}} (d)\cosh^{n-1}(d))^{\prime}\le0 \quad\mathrm{in} \quad
I_{\gamma_{0}} =(0,+\infty).
\]
Therefore, defining $H(d)= g_{\gamma_{0}} (d)\cosh^{n-1}(d) - G
(d)\cosh^{n-1}(d)$, it follows that $H$ is nonincreasing in $(0,+\infty),$ and
\[
\lim_{d \to0} H(d)= \lim_{d \to0} g_{\gamma_{0}} (d)\cosh^{n-1}(d) - G
(d)\cosh^{n-1}(d) =0.
\]
Hence, we deduce that $H(d) \le0$ for any $d \in(0,+\infty)$ which implies that
\begin{equation}
g_{\gamma_{0}} \le G \quad\mathrm{in} \quad(0,+\infty).
\label{g0BiggerThanG-gamma-0}%
\end{equation}
Then, since the map $z \mapsto z/\sqrt{1-z^{2}}$ is increasing in $(-1,1)$, we have
\[
w_{\gamma_{0}}(d) = h + \int_{d_{0}}^{d} \frac{g_{\gamma_{0}}(t)}%
{\sqrt{1-g_{\gamma_{0}}^{2}(t)}}\; dt \ge h + \int_{d_{0}}^{d} \frac{G%
(t)}{\sqrt{1-G^{2}(t)}}\; dt = W(d)
\]
for $d \le d_{0}$. Since $\lim_{d\to0} W (d) = +\infty$, we
conclude that $\lim_{d\to0} w_{\gamma_{0}}(d) = +\infty$. 
 
\end{proof}

Until now, we have proved that if $d_{0}>0$ satisfies \eqref{d0Condition1} and
$h \in\mathbb{R}$, then there exist $\gamma_{0}=\gamma_{0}(d_{0}, h)$ and a
solution $w_{\gamma_{0}}=w_{d_{0},h,\gamma_{0}}$ to the equation
\eqref{superScherkEDO} in $(0,+\infty)$ with initial condition $w_{\gamma_{0}}(d_0)=h.$
Besides $w_{\gamma_{0}}(0)=+\infty$.
Hence, it still remains to show that $w_{\gamma_{0}}(+\infty)=c$ for some such $d_{0}$ and $h \in\mathbb{R}$. Choose and fix such $d_{0}$. We analyze the relation between $h$ and
$w_{d_{0},h,\gamma_{0}}$ for this $d_{0}$. Since $d_{0}$ is fixed and
$\gamma_{0}$ depends on $h$, we use the notation $\gamma_{0}(h)=\gamma
_{0}(d_{0},h).$ Our goal now is to show that the application $\ell(h)=\lim_{d\to \infty} w_{d_{0},h,\gamma_{0}(h)}(d)$ is well-defined, continuous in $\mathbb{R}$ and surjective on $\mathbb{R}$.

\ 

\begin{lem}
The map $h\mapsto\gamma_{0}(h)$ is continuous. \label{continuityOfH}
\end{lem}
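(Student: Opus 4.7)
The plan is to combine continuous dependence of the ODE system \eqref{systemForWG} on initial data with the uniqueness characterization of $\gamma_0(h)$ from Proposition \ref{uniquenessOfGamma-0}. Fix $h_*\in\mathbb{R}$ and let $h_k\to h_*$. Propositions \ref{gamma-zero-biggerThan-1} and Theorem \ref{g-Gamma0IsSol} confine $\gamma_0(h_k)$ to the compact set $[-1+\delta,0]$ with $\delta>0$ depending only on $d_0$ and $\psi$, so after passing to a subsequence I may assume $\gamma_0(h_k)\to\gamma^*$. Since continuity at $h_*$ follows once every such subsequential limit equals $\gamma_0(h_*)$, it suffices to show $\gamma^*=\gamma_0(h_*)$. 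By classical continuous dependence, the solutions $(w_k,g_k):=(w_{d_0,h_k,\gamma_0(h_k)},g_{d_0,h_k,\gamma_0(h_k)})$ converge to $(w_*,g_*):=(w_{d_0,h_*,\gamma^*},g_{d_0,h_*,\gamma^*})$ uniformly on each compact subset of the maximal interval $I_*=(d_*,+\infty)$, and Proposition \ref{uniquenessOfGamma-0} reduces the goal to proving $d_*=0$ and $g_*(0^+)=-1$.

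For the upper estimate $\gamma^*\le\gamma_0(h_*)$, I would argue by contradiction: if $\gamma^*>\gamma_0(h_*)$, pick any $\gamma'\in(\gamma_0(h_*),\gamma^*)$. The monotonicity of $d_\gamma$ in $\gamma$ implicit in the comparison argument of Proposition \ref{uniquenessOfGamma-0} (combined with the sign analysis from Lemma \ref{boundnessOfTheDerivativesOfg}) shows $d_{h_*,\gamma'}<0$, so that $g_{h_*,\gamma'}$ is defined on a neighborhood of $0$ and bounded away from $-1$ there. Continuous dependence in $h$ together with the uniform bound in Lemma \ref{gIsBoundedByBelowForDLarge} then gives $\gamma'\in A(h_k)$ for $k$ large, whence $\gamma_0(h_k)\le\gamma'<\gamma^*$, contradicting $\gamma_0(h_k)\to\gamma^*$.

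The lower estimate $\gamma^*\ge\gamma_0(h_*)$ is the delicate part. Assuming $\gamma^*<\gamma_0(h_*)$, Lemma \ref{characterizationOFA} and Corollary \ref{LimitOfg-gamma} force $d_*\ge 0$ and $g_*(d_*^+)=-1$, and uniqueness gives $d_*>0$. I would then use the integral identity
\begin{equation*}
(\cosh d)^{n-1}g_k(d)=-1-\int_0^d(\cosh s)^{n-1}\psi(s,w_k(s))\,ds,
\end{equation*}
obtained by integrating \eqref{detivativeOfProductGammaCosh} from $0^+$ (where $g_k\to -1$) to $d$, together with the analogous identity for $g_*$ integrated from $d_*^+$ to $d_0$, to deduce
\begin{equation*}
\lim_{k\to\infty}\int_0^{d_*}(\cosh s)^{n-1}\psi(s,w_k(s))\,ds=(\cosh d_*)^{n-1}-1>0.
\end{equation*}
The Scherk comparison $w_k\ge W_{h_k}$ from Lemma \ref{w-omega-goes-to-infinity-at-0} forces $w_k(s)\to+\infty$ as $s\to 0^+$ uniformly in $k$ along a controlled rate, so property \ref{psiDefinitionef} makes the contribution from a neighborhood of $s=0$ negligible; combining this with the monotonicity of $w_k$ in $d$ and the limiting behavior of $w_k(d_*)$ (which can be pinned down from $g_k(d_*)\to-1$ and the convergence on $(d_*,d_0]$) should contradict the positive limit above. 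The main obstacle is precisely this last step: continuous dependence yields no information about $(w_k,g_k)$ on the strip $(0,d_*]$ where the approximations live but the limit does not, so the behavior of $w_k$ there must be extracted indirectly from the Scherk comparison, the decay \ref{psiDefinitionef}, and the equations themselves, making the passage to the limit in the integral the technical heart of the proof.
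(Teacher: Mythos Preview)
Your overall strategy---compactness of $\gamma_0(h_k)$ in $[-1+\delta,0]$, passage to a subsequential limit $\gamma^*$, and identification of $\gamma^*$ with $\gamma_0(h_*)$ via Proposition \ref{uniquenessOfGamma-0}---is the paper's strategy as well. But the paper does \emph{not} split into separate upper and lower inequalities, and both halves of your split contain gaps.

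For the upper estimate, the assertion that $\gamma'>\gamma_0(h_*)$ forces $d_{h_*,\gamma'}<0$ is unjustified: neither monotonicity of $d_\gamma$ in $\gamma$ nor the interval structure of $A$ is established anywhere in the paper, and Lemma \ref{characterizationOFA} only says that membership in $A$ is equivalent to $d_{\gamma'}<0$ \emph{or} $g_{\gamma'}(d_{\gamma'}^+)=1$. You would first have to argue that $\gamma'\in A(h_*)$ at all, and then treat the second alternative separately, much as in the second case of the proof of Proposition \ref{gamma-zero-notInA}.

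The more serious gap is in the lower estimate. Your integral identity is correct and reduces the contradiction to showing $w_k(d_*)\to+\infty$, but this does \emph{not} follow merely from $g_k(d_*)\to-1$ together with convergence on $(d_*,d_0]$: if $g_k'(d_*)$ has a positive subsequential limit, then $1-g_k^2(d)$ behaves linearly in $d-d_*$ near $d_*$ and the integral $\int_{d_*}^{d_0}|g_k|/\sqrt{1-g_k^2}\,dt$ defining $w_k(d_*)-h_k$ stays bounded. One must first prove $g_k'(d_*)\to 0$, and then use a second-order Taylor expansion (with the one-sided bound on $g_k''$ from Lemma \ref{boundnessOfTheDerivativesOfg}) to control $1-g_k^2$ quadratically and force the integral to diverge. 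These are exactly Claims 2 and 3 of Theorem \ref{g-Gamma0IsSol}. The paper's proof of the lemma simply reruns them: it extends each $g_k$ continuously to $[0,d_0]$ by $g_k(0)=-1$, applies Arzel\`a--Ascoli to obtain \emph{uniform} convergence of $g_k$ on the closed interval $[0,d_0]$ (this compactness on the full interval, rather than only on compacts of $(d_*,d_0]$, is the device your sketch never invokes), deduces $d^*\ge 0$ and $g^*(d^{*+})=-1$, and then quotes Claims 2--3 verbatim to conclude $d^*=0$. Uniqueness then yields $\gamma^*=\gamma_0(h^*)$ directly, with no upper/lower dichotomy needed.
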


\begin{proof}
Suppose by contradiction that $\gamma_{0}(h)$ is not continuous at
some $h^{*} \in\mathbb{R}$. Then, there exists a sequence $(h_{k})$ and  $\varepsilon >0$ such that
\begin{equation}
h_{k} \to h^{*} \quad\mathrm{and} \quad|\gamma_{0}(h_{k}) - \gamma_{0}(h^{*})|
> \varepsilon. \label{differenceBetweenGammaHkAndGammaHstar}%
\end{equation}
Observe that, from Proposition \ref{gamma-zero-biggerThan-1} and Theorem
\ref{g-Gamma0IsSol}, $-1+\delta\le \gamma_{0}(h_{k}) < 0$. Hence, up to a subsequence, there exists $\gamma^{*}\in[-1+\delta,0]$ such that $\lim_{k\rightarrow \infty}\gamma_{0}(h_{k})=\gamma^{*}$. 
Consider the solution ($w_{k},g_{k}$) of \eqref{systemForWG} associated to
$(\gamma_{0}(h_{k}))$, that is, $w_{k}(d_{0})=h_{k}$ and $g_{k}(d_{0}%
)=\gamma_{0}(h_{k})$. Observe that the domain of ($w_{k},g_{k}$) is $(0,+\infty)$
and $g_{k}(0^{+})=-1$, thanks to Theorem \ref{g-Gamma0IsSol}. Moreover, $|g_{k}| < 1$ and $|g_{k}^{\prime}|$ is
bounded by Lemma \ref{boundnessOfTheDerivativesOfg}. Hence, extending
$g_{k}$ continuously at $0$ by $g_{k}(0)=-1$, we can apply Arzel\`a-Ascoli
Theorem to conclude that some subsequence of $(g_{k})$ converges uniformly to
some continuous function $\tilde{g}$ in $[0,d_{0}]$. Observe that
\[
\tilde{g}(0)=\lim_{k \to\infty} g_{k}(0)=-1 \text{ and }\tilde
{g}(d_{0})=\lim_{k \to\infty} g_{k}(d_{0})=\gamma^{*}.
\]
On the other hand, from the continuous dependence of solutions on initial
conditions, $h_{k} \to h^{*}$ and $\gamma_{0}(h_{k}) \to\gamma^{*}$, it
follows that $w_{k} \to w^{*}$ and $g_{k} \to g^{*}$ uniformly in compacts of
$I_{h^{*},\gamma^{*}}$, where ($w^{*},g^{*}$) is the solution of
\eqref{systemForWG} in $I_{h^{*},\gamma^{*}}$ such that $w^{*}(d_{0})=h^{*}$
and $g^{*}(d_{0})=\gamma^{*}$. Therefore, $\tilde{g}=g^{*}$ in $[0,d_{0}] \cap
I_{h^{*},\gamma^{*}}$. If $0 \in I_{h^{*},\gamma^{*}}$, then $g^{*}%
(0)=\tilde{g}(0)=-1$, contradicting that $|g^{*}| < 1$ in $I_{h^{*},\gamma
^{*}}$. Hence, the left endpoint of $I_{h^{*},\gamma^{*}}$, denoted by $d^{*}%
$, satisfies $0\le d^{*} < d_{0}$. 

 Next, notice that
\[
\lim_{d\to d^{*}}g^{*}(d)=-1,
\]
otherwise, according to Corollary \ref{LimitOfg-gamma}, $\lim_{d\to d^{*}%
}g^{*}(d)=1$, which contradicts $g_{k} \to g^{*}$ and $g_{k} < 0$ from $\ref{cor-d}$
of Corollary \ref{LimitOfg-gamma}. Hence, the solution $g_{k}:(0,+\infty)
\to\mathbb{R}$ converges uniformly to the solution $g^{*}:(d^{*},+\infty)
\to\mathbb{R}$ in $(d^{*},d_{0}]$, and $\lim_{k\rightarrow \infty} g_{k}(d^{*})=\lim_{d\to
d^{*}}g^{*}(d)=-1$. Following the same argument as in Claims 2 and 3 of Theorem
\ref{g-Gamma0IsSol}, one can show that $d^{*}=0$
and $g^{*}$ is a solution of \eqref{systemForWG} in $(0,+\infty)$ satisfying
$g^{*}(d_{0})=\gamma^{*}$ and $w^{*}(d_{0})=h^{*}$. Moreover, $\lim_{d\to0}
g^{*}(d)=-1$, since $g_{k} \to g^{*}$ and $g_{k} < 0$. Hence, from Proposition
\ref{uniquenessOfGamma-0}, $\gamma_{0}(h^{*})=\gamma^{*}$. But, this
contradicts $\gamma_{0}(h_{k}) \to\gamma^{*}$ and
\eqref{differenceBetweenGammaHkAndGammaHstar}, proving that $\gamma_{0}$ is
continuous. 
\end{proof}

\begin{lem}
For any $h\in\mathbb{R}$, there exists $\displaystyle{\lim_{d\rightarrow+\infty}w_{d_{0},h,\gamma_{0}}(d)}.$ We denote this limit by $\ell(h).$
Moreover, the convergence is uniform in $h$.
\label{estimateBetweenW-h-dAndTheLimit}
\end{lem}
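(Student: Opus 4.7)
The plan is to use the explicit formula \eqref{wExpressionByG} which gives
$$w_{d_{0},h,\gamma_{0}}(d) = h + \int_{d_{0}}^{d} \frac{g_{\gamma_{0}(h)}(t)}{\sqrt{1-g_{\gamma_{0}(h)}^{2}(t)}}\, dt,$$
so that existence of $\ell(h)$ reduces to convergence of the improper integral, and uniform convergence in $h$ reduces to finding an integrable dominating function on $[d_{0},+\infty)$ that does not depend on $h$.

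My first step is to recall the estimate obtained in the proof of part \ref{cor-a} of Corollary \ref{LimitOfg-gamma}, namely
$$|g_{\gamma_{0}(h)}(d)| \leq \frac{\cosh^{n-1}(d_{0})}{\cosh^{n-1}(d)} + \rho(d), \qquad d \geq d_{0},$$
where $\rho$ is defined by \eqref{eq-defrho}. The key observation is that the right-hand side is independent of $\gamma$, hence independent of $h$. By Remark \ref{rmk-rhoint} the function $\rho$ is integrable on $[d_{0},+\infty)$, and the first term is exponentially integrable, so this uniform bound is itself integrable and tends to $0$ at infinity.

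My second step is to upgrade this $L^{1}$-type bound on $|g_{\gamma_{0}(h)}|$ to a bound on the integrand $g/\sqrt{1-g^{2}}$. Since the uniform majorant above tends to $0$, there exists $d_{1} \geq d_{0}$, independent of $h$, such that $|g_{\gamma_{0}(h)}(t)| \leq 1/2$ for every $t \geq d_{1}$. On this range,
$$\left|\frac{g_{\gamma_{0}(h)}(t)}{\sqrt{1-g_{\gamma_{0}(h)}^{2}(t)}}\right| \leq \frac{2}{\sqrt{3}}\,|g_{\gamma_{0}(h)}(t)| \leq \frac{2}{\sqrt{3}}\left(\frac{\cosh^{n-1}(d_{0})}{\cosh^{n-1}(t)} + \rho(t)\right),$$
and the last expression is an integrable function of $t$ independent of $h$. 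On the compact interval $[d_{0},d_{1}]$, the same integrand is bounded by $1/\sqrt{1-g^{2}}$ times the uniform bound on $g_{\gamma_{0}(h)}$, which again gives an $h$-independent bound since $g_{\gamma_{0}(h)}$ stays strictly inside $(-1,1)$ on this fixed compact set (this uses that the uniform estimate above gives $|g_{\gamma_{0}(h)}(t)|$ bounded by a constant strictly less than $1$ on $[d_{0},d_{1}]$ after possibly enlarging $d_{1}$).

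The conclusion then follows by Cauchy's criterion: given $\varepsilon>0$, choose $D \geq d_{1}$ so that $\int_{D}^{+\infty}\bigl(\cosh^{1-n}(t) + \rho(t)\bigr)\,dt$ is smaller than $\varepsilon\sqrt{3}/2$; then for any $d',d\geq D$ and any $h\in\mathbb{R}$,
$$\left|\int_{d}^{d'} \frac{g_{\gamma_{0}(h)}(t)}{\sqrt{1-g_{\gamma_{0}(h)}^{2}(t)}}\, dt\right| \leq \varepsilon.$$
This gives both the existence of $\ell(h)$ and the uniform convergence $w_{d_{0},h,\gamma_{0}}(d) \to \ell(h)$ as $d\to+\infty$. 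The only real subtlety is ensuring that the estimate on $|g_\gamma|$ from Corollary \ref{LimitOfg-gamma} is uniform in the initial datum $\gamma \in (-1,1)$; this is true because the bound depends on $\gamma$ only through the factor $|\gamma| \leq 1$ and on the fixed parameter $d_{0}$.
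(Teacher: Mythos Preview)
Your approach is essentially the same as the paper's: both use the $h$-independent majorant
\[
|g_{\gamma_{0}(h)}(d)| \leq \frac{\cosh^{n-1}(d_{0})}{\cosh^{n-1}(d)} + \rho(d), \qquad d \geq d_{0},
\]
together with the integrability of $\rho$ from Remark \ref{rmk-rhoint}. The only real difference lies in how the factor $1/\sqrt{1-g^{2}}$ is controlled uniformly in $h$. The paper invokes Lemma \ref{gIsBoundedByBelowForDLarge} and Proposition \ref{gamma-zero-biggerThan-1} to obtain a constant $\beta \in (0,1)$, independent of $h$, with $-\beta \leq g_{\gamma_{0}(h)}(d) < 0$ on the \emph{whole} of $[d_{0},\infty)$; this immediately gives $1/\sqrt{1-g^{2}} \leq C_{1}$ on $[d_{0},\infty)$. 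You instead use the majorant itself to find $d_{1} \geq d_{0}$ with $|g| \leq 1/2$ on $[d_{1},\infty)$, which is enough for the tail.

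One remark: your parenthetical claim that ``the uniform estimate above gives $|g_{\gamma_{0}(h)}(t)|$ bounded by a constant strictly less than $1$ on $[d_{0},d_{1}]$'' is false. At $t = d_{0}$ the majorant equals $1 + \rho(d_{0}) = 1$, so it yields no information there, and enlarging $d_{1}$ does not help. Fortunately this aside is unnecessary: the Cauchy-criterion step you wrote, valid for $d,d' \geq D \geq d_{1}$, already gives both the existence of $\ell(h)$ and the uniform convergence, with no need to say anything about $[d_{0},d_{1}]$. If you did want a uniform bound on $1/\sqrt{1-g^{2}}$ down to $d_{0}$, you would have to use Proposition \ref{gamma-zero-biggerThan-1} as the paper does.
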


\begin{proof} To simplify notation, we denote $g_{\gamma_{0}}(d)=g_{\gamma_{0}(h)}(d)$ by
$g_{h}(d)$. First recall that $g_{h} <0$ from $\ref{cor-d}$ of Corollary
\ref{LimitOfg-gamma}. 
Moreover, from Lemma \ref{gIsBoundedByBelowForDLarge} and Proposition
\ref{gamma-zero-biggerThan-1}, we deduce that
there exists $\beta\in(0,1)$ that does not depend on $h$ such that $g_{h}(d) \ge-\beta$ for $d
\ge d_{0}$. 
Therefore, we have
\begin{equation}
\frac{1}{\sqrt{1-g_{h}^{2}(d)}} \le C_{1}:=\frac{1}{\sqrt{1 -\beta^{2}}}
\text{ for } d \ge d_{0}. \label{g-hSupBoundForInverse}%
\end{equation}
We also recall that, according to \eqref{eq-defrho}, there holds
\begin{equation}
|g_{h}(d)| \le\frac{\cosh^{n-1}(d_{0})}{\cosh^{n-1}(d)} + \rho(d)
\text{ for }  d \ge d_{0}. \label{g-hSupBound}%
\end{equation}
Since $d_{0}$ is fixed and
$\gamma_{0}$ depends on $h$, we use the notation  $w_{h}(d) = w_{d_{0},h,\gamma_{0}}(d)$.
Using \eqref{wExpressionByG}, the
inequalities \eqref{g-hSupBound} and \eqref{g-hSupBoundForInverse} imply
\[
|w_{h}(d_{2})-w_{h}(d_{1})| = \left|  \int_{d_{1}}^{d_{2}} \frac{g_{h}%
(t)}{\sqrt{1-g_{h}^{2}(t)}} \; dt \right|  \le C_{1} \int_{d_{1}}^{d_{2}}
\frac{\cosh^{n-1}(d_{0})}{\cosh^{n-1}(t)}  +  \rho(t) \; dt,
\]
for $d_{0} \le d_{1} < d_{2}$.
So using \eqref{boundForIntegralOfRho}, we obtain that
\begin{equation}
|w_{h}(d_{2})-w_{h}(d_{1})| \le C_{0} \left(  \frac{1}{\cosh^{n-1}(d_{1}) } +
\rho(d_{1}) + \int_{d_{1}}^{d_{2}} \psi(s,0) \, ds \right)  ,
\label{differenceEstimateBetweenWh1-Wh2}%
\end{equation}
for some constant $C_0$  depending only on $n$, $d_{0}$ and $\psi$.
From this, we get that $w_{h}$ is bounded in $[d_{1},+\infty)$.
Furthermore, since $g_{h} < 0$ and $w_{h}$ satisfies \eqref{wExpressionByG}, we deduce that $w_{h}$ is decreasing. 
Thus, $w_h(d)$ converges as $d \to \infty$. The uniform convergence is due to the fact that the right-hand side of \eqref{differenceEstimateBetweenWh1-Wh2} does not depend on $h$.
This establishes the proof.
\end{proof}

We are now in position to prove the following theorem:

\begin{thm}
\label{lemsurjective}
The application $h\mapsto\ell(h)$ is continuous in $\mathbb{R}$ and
surjective on $\mathbb{R}$. \label{continuityOfOmega-h}
\end{thm}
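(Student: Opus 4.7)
The plan is to establish continuity and surjectivity separately, both leveraging the preceding lemmas to avoid re-doing any analytical work.

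For continuity, I would fix $h^{*}\in\mathbb{R}$ and a sequence $h_{n}\to h^{*}$, and run a standard three-$\varepsilon$ argument at some large fixed $D$. First, by Lemma \ref{estimateBetweenW-h-dAndTheLimit}, for any $\varepsilon>0$ one can choose $D>d_{0}$ so that $|w_{d_{0},h,\gamma_{0}(h)}(D) - \ell(h)| < \varepsilon/3$ \emph{uniformly in $h$}. Next, Lemma \ref{continuityOfH} gives $\gamma_{0}(h_{n}) \to \gamma_{0}(h^{*})$. Since Theorem \ref{g-Gamma0IsSol} ensures that all the corresponding trajectories of \eqref{systemForWG} are defined on the common interval $(0,+\infty)$, classical continuous dependence of ODE solutions on initial data (applied on the compact interval $[d_{0},D]$) yields $w_{h_{n}}(D) \to w_{h^{*}}(D)$. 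Combining these estimates gives $\ell(h_{n}) \to \ell(h^{*})$.

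For surjectivity, the observation is that applying \eqref{differenceEstimateBetweenWh1-Wh2} with $d_{1}=d_{0}$ and letting $d_{2}\to+\infty$ produces a uniform bound
$$|\ell(h) - h| \le C_{0}\left(\frac{1}{\cosh^{n-1}(d_{0})} + \int_{d_{0}}^{+\infty} \psi(s,0)\,ds\right) =: K,$$
since $\rho(d_{0})=0$ by definition \eqref{eq-defrho} and the tail integral converges by property \ref{psiDefinitiong} of $\psi$. This bound is independent of $h$, so $\ell(h)\to +\infty$ as $h\to +\infty$ and $\ell(h)\to -\infty$ as $h\to -\infty$. Together with the continuity already established, the Intermediate Value Theorem delivers surjectivity onto all of $\mathbb{R}$.

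I do not expect a serious obstacle here: the genuine analytic effort (existence and uniqueness of $\gamma_{0}(h)$, the uniform asymptotic estimate for $w_{h}$, and the continuity of $\gamma_{0}$) has already been carried out. The only point that deserves a moment of care is verifying that continuous dependence on initial conditions can be invoked for the whole family $\{(w_{h_{n}},g_{h_{n}})\}$ at once, but this is automatic because Theorem \ref{g-Gamma0IsSol} certifies that the maximal interval $I_{\gamma_{0}(h)}$ is $(0,+\infty)$ for every $h$, so $[d_{0},D]$ lies in the common domain.
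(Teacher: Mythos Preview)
Your proposal is correct and follows essentially the same route as the paper: the three-$\varepsilon$ argument combining the uniform tail estimate from Lemma \ref{estimateBetweenW-h-dAndTheLimit}, the continuity of $\gamma_{0}$ from Lemma \ref{continuityOfH}, and continuous dependence on initial data, followed by the bound $|\ell(h)-h|\le K$ obtained from \eqref{differenceEstimateBetweenWh1-Wh2} at $d_{1}=d_{0}$ together with the Intermediate Value Theorem. Your observation that $\rho(d_{0})=0$ is a small sharpening over the paper's constant $\sigma$, but the argument is otherwise identical.
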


\begin{proof} Let $h_{0} \in\mathbb{R}$ and $\varepsilon> 0$. By Lemma
\ref{estimateBetweenW-h-dAndTheLimit}, we have, for $d_{1} > d_{0}$ large enough,
\[
|w_{h}(d_{1}) -\ell(h)| < \varepsilon/3 \text{ for any } h
\in\mathbb{R}.
\]
On the other hand, from Lemma \ref{continuityOfH}, if $h$ is close to $h_{0}$, then
$\gamma_{0}(h)$ is close to $\gamma_{0}(h_{0})$. Hence, using the continuous
dependence of solutions on initial conditions, there exists $\delta_{1} >0$
such that if $|h-h_{0}| < \delta_{1}$, then
\[
|w_{h}(d) - w_{h_{0}}(d)| = |w_{d_{0},h,\gamma_{0}(h)}(d) - w_{d_{0}%
,h_{0},\gamma_{0}(h_{0})}(d)| < \varepsilon/3 \text{ for }d
\in[d_{0},d_{1}].
\]
In particular,
\[
|w_{h}(d_{1}) - w_{h_{0}}(d_{1})| < \varepsilon/3.
\]
Therefore, if $|h-h_{0}| < \delta_{1}$, we have
\[
|\ell(h) - \ell(h_{0})| \le|\ell(h) - w_{h}(d_{1})| + |w_{h}(d_{1}) -
w_{h_{0}}(d_{1})| + |w_{h_{0}}(d_{1}) -\ell(h_{0})| < \varepsilon.
\]
Thus, the application $h\mapsto\ell (h)$ is continuous. To prove
that $\ell$ is surjective, let
\[
\sigma:= C_{0} \left(  \frac{1}{\cosh^{n-1}(d_{0}) } + \rho(d_{0}) +
\int_{d_{0}}^{+\infty} \psi(s,0) \, ds \right)  .
\]
Then, from Lemma \ref{estimateBetweenW-h-dAndTheLimit} (see \eqref{differenceEstimateBetweenWh1-Wh2}), we have
\[
|w_{h}(d_{0}) - \ell(h) | \le\sigma \text{ for any }  h
\in\mathbb{R}.
\]
Recalling that $w_{h}(d_{0})=h$, we deduce that $h - \sigma\le\ell(h) \le h + \sigma$
and, therefore,
\[
\lim_{h \to+\infty} \ell(h) = +\infty\text{ and }\lim_{h
\to-\infty} \ell(h) = -\infty.
\]
The continuity of $\ell$ implies that $\ell$ is an onto
application. 
\end{proof}

\begin{cor}
For any $c\in\mathbb{R}$, problem \eqref{superScherkEDO} and, therefore,
problem \eqref{superScherkEDP} have a solution. The solution $w$ of problem \eqref{superScherkEDO} is a decreasing function in $(0,\infty),$ satisfying $w>c.$
Even if $f$ changes sign, this solution is a supersolution of \eqref{ScherkProblem}. \label{MainTheorem}
\end{cor}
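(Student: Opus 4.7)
The plan is to assemble the Corollary directly from the preceding results in the section, mostly by bookkeeping. First, I would apply Theorem \ref{lemsurjective}: since $h \mapsto \ell(h)$ is surjective on $\mathbb{R}$, for the given constant $c$ I can pick $h \in \mathbb{R}$ with $\ell(h) = c$. Setting $w := w_{d_0,h,\gamma_0(h)}$, I get a $C^2$ function on $(0,+\infty)$ solving the ODE in \eqref{superScherkEDO} with $w(+\infty) = c$ by the choice of $h$. The boundary condition $w(0) = +\infty$ is exactly Lemma \ref{w-omega-goes-to-infinity-at-0}. This produces a solution of \eqref{superScherkEDO}.

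To pass from \eqref{superScherkEDO} to \eqref{superScherkEDP}, I would set $u(x) := w(d(x))$, where $d = d_S$ is the signed distance to $S$. Using $\Delta d = (n-1)\tanh d$ (recorded earlier in the section), the derivation leading to \eqref{superScherkEDO} is reversible, so $u$ satisfies $Q(u) = \psi(d(x), u(x))$ in $B$. The asymptotic boundary condition $u = c$ on $\partial_\infty B$ follows since $d(x) \to +\infty$ as $x$ approaches $\partial_\infty B$ and $w(+\infty)=c$, and $u = +\infty$ on $S$ follows from $w(0)=+\infty$.

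For the monotonicity, Theorem \ref{g-Gamma0IsSol} together with part \ref{cor-d} of Corollary \ref{LimitOfg-gamma} gives $g_{\gamma_0} < 0$ on all of $I_{\gamma_0} = (0,+\infty)$. From the first equation of \eqref{systemForWG}, $w'(d) = g_{\gamma_0}(d)/\sqrt{1 - g_{\gamma_0}^2(d)} < 0$, so $w$ is strictly decreasing on $(0,+\infty)$. Combined with $w(+\infty) = c$, this immediately yields $w(d) > c$ for every $d \in (0,+\infty)$.

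Finally, for the supersolution claim when $f$ may change sign, I note that by property \ref{psiDefinitiona} of Proposition \ref{prop-Psi} we have $\psi(d(x),t) \geq |f(x,t)| \geq f(x,t)$ for every $(x,t) \in \mathbb{H}^n \times \mathbb{R}$, so
\begin{equation*}
Q(u)(x) \;=\; \psi(d(x), u(x)) \;\geq\; f(x, u(x)) \quad \text{in } B,
\end{equation*}
which is precisely the supersolution inequality for \eqref{ScherkProblem}. The boundary behavior $u = c$ on $\partial_\infty B$ and $u = +\infty$ on $S$ is unchanged. The only mild point is that Proposition \ref{prop-Psi} and hence the whole construction only needs conditions \ref{propphi1} and \ref{propphi2} on $f$, not nonnegativity, so the same $w$ works verbatim as a supersolution in the sign-changing case. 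The main (and only slightly delicate) step is the chain assembling Theorem \ref{lemsurjective} with Lemma \ref{w-omega-goes-to-infinity-at-0}; everything else is immediate from the properties of $g_{\gamma_0}$ already established.
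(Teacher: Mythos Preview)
Your proof is correct and follows essentially the same approach as the paper's. The only cosmetic difference is that for $g_{\gamma_0}<0$ the paper cites the last item of Lemma \ref{boundnessOfTheDerivativesOfg} directly, whereas you cite part \ref{cor-d} of Corollary \ref{LimitOfg-gamma} (which is itself proved from that lemma); you are also more explicit than the paper in invoking Lemma \ref{w-omega-goes-to-infinity-at-0} and in spelling out the supersolution inequality via Proposition \ref{prop-Psi}\ref{psiDefinitiona}.
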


\begin{proof}
According to Theorem \ref{continuityOfOmega-h},
$\ell(h)$ is surjective on $\mathbb{R}$. Then, there exists $h_{c}$ such
that $\ell(h_{c})=c$. Therefore, the solution $w_{h_{c}}(d)=w_{d_{0},h_{c},\gamma_{0}(h_{c})}(d)$ associated to $h_{c}$ is the one that we are
looking for. 

Remind that $g_{\gamma_0}=g_{d_0,h_c,\gamma_0(h_c)}$ that corresponds to $w_{h_{c}}=w_{d_{0}
,h_{c},\gamma_{0}(h_{c})}$ satisfies
$$ \lim_{d \to0} g_{\gamma_{0}}(d) = -1 ,$$
according to Theorem \ref{g-Gamma0IsSol}. Hence, from the last item of Lemma \ref{boundnessOfTheDerivativesOfg}, $g_{\gamma_{0}} < 0$ in $[0,+\infty)$.
Therefore, $w_{h_{c}}'(d)<0$ for any $d >0$, that is, the solution of \eqref{superScherkEDO}, $w_{h_{c}}$ is a decreasing function. Then, using that  $w_{h_c}(+\infty)=c$, we conclude that $ w_{h_c} > c$. 

\end{proof}

By a standard comparison argument, we also have:

\begin{proposition}
There exists a subsolution $W:[0,+\infty) \to \R$ for \eqref{superScherkEDO}, given by \eqref{ExpressionForScherkInB}, satisfying $W(+\infty)=c$ and $W(d)\leq w_{h_{c}}(d)$, where $w_{h_{c}}$ is the solution from previous corollary.
\label{kindOfScherkBelowSol}
\end{proposition}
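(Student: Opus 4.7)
The plan is to exhibit $W$ as the explicit Scherk-type function already constructed in the proof of Lemma \ref{w-omega-goes-to-infinity-at-0}, with its free constant $h$ tuned so that the asymptotic value at infinity matches $c$, and then compare it with $w_{h_c}$ using the inequality $g_{\gamma_0}\le G$ which is essentially already proved there.

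More precisely, I would first observe that the function
\[
W(d)=h-\int_{d_0}^{d}\frac{dt}{\sqrt{\cosh^{2n-2}(t)-1}}
\]
from \eqref{ExpressionForScherkInB} is the solution of the minimal graph ODE
\[
\frac{W''}{(1+W'^2)^{3/2}}+(n-1)\tanh(d)\,\frac{W'}{\sqrt{1+W'^2}}=0,
\]
i.e.\ of \eqref{superScherkEDO} with the right-hand side replaced by $0$. Since $\psi\ge 0$, the function $W$ satisfies
\[
\frac{W''}{(1+W'^2)^{3/2}}+(n-1)\tanh(d)\,\frac{W'}{\sqrt{1+W'^2}}=0\ge-\psi(d,W),
\]
so it is automatically a subsolution of the ODE in \eqref{superScherkEDO}, hence of \eqref{ScherkProblem}.

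Next I would pin down the constant $h$ so that $W(+\infty)=c$. The tail integral $I:=\int_{d_0}^{+\infty}\bigl(\cosh^{2n-2}(t)-1\bigr)^{-1/2}\,dt$ converges (the integrand decays like $2^{n-1}e^{-(n-1)t}$), so choosing $h=c+I$ gives $W(+\infty)=c$ as required; note also that $W$ itself is smooth on $(0,+\infty)$ and tends to $+\infty$ as $d\to 0^+$, which for completeness matches the blow-up condition on $S$.

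Finally, the key comparison $W\le w_{h_c}$: in the proof of Lemma \ref{w-omega-goes-to-infinity-at-0} we already established
\[
g_{\gamma_0}(d)\le G(d)=-\frac{1}{\cosh^{n-1}(d)}\qquad\text{for all } d\in(0,+\infty).
\]
Using that the map $z\mapsto z/\sqrt{1-z^2}$ is strictly increasing on $(-1,1)$, this yields
\[
w_{h_c}'(d)=\frac{g_{\gamma_0}(d)}{\sqrt{1-g_{\gamma_0}^2(d)}}\le\frac{G(d)}{\sqrt{1-G^2(d)}}=W'(d).
\]
Integrating this inequality from $d$ up to some $D>d$ and letting $D\to+\infty$, using $w_{h_c}(+\infty)=c=W(+\infty)$, we obtain $0\le w_{h_c}(d)-W(d)$, i.e.\ $W(d)\le w_{h_c}(d)$, which completes the proof.

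The argument has essentially no obstacle: the only delicate point is bookkeeping the signs and directions in the comparison (both $g_{\gamma_0}$ and $G$ are negative, so the ``smaller'' $g_{\gamma_0}$ gives the more negative slope for $w_{h_c}$, and the inequality flips correctly after integration from $d$ to $+\infty$). Everything else — convergence of $I$, the subsolution property, and uniqueness of the tuning $h=c+I$ — is immediate.
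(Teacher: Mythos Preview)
Your proof is correct and is precisely the ``standard comparison argument'' the paper alludes to but does not write out: the inequality $g_{\gamma_0}\le G$ on $(0,+\infty)$ from the proof of Lemma \ref{w-omega-goes-to-infinity-at-0} (valid for any $h$, in particular $h_c$), combined with the matching asymptotic values at $+\infty$, gives $W\le w_{h_c}$ after integration, and $\psi\ge 0$ makes $W$ a subsolution.
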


\begin{proof}
[Proof of Theorem \ref{s}]Let $W$ and $w_{h_c}$ be as stated in Proposition \ref{kindOfScherkBelowSol}.
Then they are sub and supersolutions of $Q(v)=f(x,v)$ in $B$ such that $W,w_{h_c}=+\infty$ on $S$ and
$W,w_{h_c}=c$ on $\partial_{\infty}B.$ Set
\[
\mathcal{S}_{B}=\left\{  \sigma\in C^{0}\left(  B\right) \, |\,
\sigma\text{ is a subsolution of }Q\text{ and }W\leq\sigma\leq w_{h_c}\right\}  .
\]
From the results from Section \ref{sec-genExist}, it is clear that Perron's method can be
applied to the equation $Q(v)=f(x,v)$ and we conclude that the function $u$ defined in
$B$ by
\[
u(x)=\sup_{\sigma\in\mathcal{S}_{B}}\sigma(x),\text{ }x\in B,
\]
is $C^{2}$ and satisfies $Q(u)=f(x,u).$ Clearly $u|_{S}=+\infty,$ $u$ extends
continuously to $\partial_{\infty}B$ and $u|_{\partial_{\infty}B}=c$, since $W \le u \le w_{h_c}$.

Observe that $w_{h_c}$ is a supersolution of \eqref{ScherkProblem} even if $f$ changes sign and satisfies only conditions \ref{propphi1} and \ref{propphi2}. If $\tilde{w}$ is a supersolution of $Q(v)=-f(x,v)$ that satisfies $v=-c$ on $\partial_{\infty} B$ and $v=+\infty$ on $S$, then $-\tilde{w}$ is a subsolution of \eqref{ScherkProblem} that satisfies $v=-\infty$ on $S$. 
\end{proof}

\section{The asymptotic Dirichlet problem}
\label{The_asymptotic_Dirichlet_problem_subsection}

In this section, we solve the asymptotic Dirichlet problem for our prescribed mean curvature type equation by making use of the Scherk type solutions obtained in the previous section. We refer to \cite{CHHfmin} for related results using another method.

\begin{proof}[Proof of Theorem \ref{thm-Scherk}]
First consider the case where $f$ satisfies condition \ref{propphi2}. We use Perron's method by setting
\[
\mathcal{S}_{\varphi}=\left\{  \sigma\in C^{0}\left(  \mathbb{H}^{n}\right)\,
\vert\, \sigma\text{ is a subsol. and }\limsup_{x\rightarrow x_{0}}\sigma
(x)\leq\varphi(x_{0}),\,x_{0}\in\partial_{\infty}\mathbb{H}^{n}\right\}
\]
and defining
\[
u(x)=\sup_{\sigma\in\mathcal{S}_{\varphi}}\sigma(x),\text{ }x\in\mathbb{H}%
^{n}.
\]
We first prove that $u$ is well defined. Set
\[
C_{1}=\min_{\partial_{\infty}\mathbb{H}^{n}} \varphi 
\text{ and }C_{2}=\max_{\partial_{\infty}\mathbb{H}^{n}} \varphi
 .
\]
Let $S_{1}\subset B_2$ and $S_{2}\subset B_1$ be two totally geodesic hypersurfaces of
$\mathbb{H}^{n}$ where each $B_i$, $i=1,2,$ is a connected component of
$\mathbb{H}^{n}\backslash S_{i}$. By Theorem \ref{s}, let $w_{i}$ be two Scherk type supersolutions of \eqref{ScherkProblem} in $B_{i}$ such that $w_{i}|_{\partial_{\infty}B_{i}}=C_{2}$ and $w_{i}|_{S_{i}}=+\infty.$ Let $v_{i}$ be two Scherk type subsolutions of \eqref{ScherkProblem} in $B_{i}$ such that $v_{i}
|_{\partial_{\infty}B_{i}}=C_{1}$ and $v_{i}|_{S_{i}}=-\infty.$ Define super and
subsolutions $w,v$ in $\mathbb{H}^{n}$ by
\[w(x)=\begin{cases}
w_{1}(x)\text{ if }x\in B_{1}\backslash B_{2}\\
\inf\left\{  w_{1}(x),w_{2}(x)\right\}  \text{ if }x\in B_{1}\cap B_{2}\\
w_{2}(x)\text{ if }x\in B_{2}\backslash B_{1}
\end{cases}\]
and
\[v(x)=\begin{cases}
v(x)\text{ if }x\in B_{1}\backslash B_{2}\\
\sup\left\{  v_{1}(x),v_{2}(x)\right\}  \text{ if }x\in B_{1}\cap B_{2}\\
v_{2}(x)\text{ if }x\in B_{2}\backslash B_{1}.\end{cases}\]
Since $v\in\mathcal{S}_{\varphi}$ by the comparison principle we have that
$\sigma\leq w$ for all $\sigma\in\mathcal{S}_{\varphi}.$ It follows that $u$
is well defined. Perron's method then guarantees that $u\in C^{2}\left(
\mathbb{H}^{n}\right)  $ and that $Q(u)(x)=f(x,u)$ for all $x\in\mathbb{H}
^{n}.$ We now prove that $u$ extends continuously to $\partial_{\infty}
\mathbb{H}^{n}$ and that $u|_{\partial_{\infty}\mathbb{H}^{n}}=\varphi.$

 Choose $x_{0}\in\partial_{\infty}\mathbb{H}^{n}$ and let $\varepsilon>0$ be given. Since $\varphi$ is continuous, there exists an open neighborhood
$W\subset\partial_{\infty}\mathbb{H}^{n}$ of $x_{0}$ such that $\varphi
(x)<\varphi(x_{0})+\varepsilon$ for all $x\in W$. We may take a totally
geodesic hypersuface $S$ of $\mathbb{H}^{n}$ such that one of the connected
components of $\mathbb{H}^{n}\backslash S,$ say $B,$ is such that $x_{0}%
\in\partial_{\infty}B\subset W.$ Define a Scherk type solution $\tilde{w}$
on $B$ such that $\tilde{w}|_S=+\infty$ and $\tilde{w}|_{\partial
_{\infty}B}=C,$ where $C=\varphi(x_{0})+\varepsilon.$ Note that given
$\sigma\in\mathcal{S}_{\varphi},$  and denoting by $\sigma_{B}=\sigma|_B,$ the
comparison principle implies that $\tilde{w}|_B\leq\sigma_{B}.$ It follows
that $u\leq\tilde{w}$ in $B$ and then
\[
\limsup_{x\rightarrow x_{0}}u(x)\leq\varphi(x_{0})+\varepsilon.
\]
We may also construct a subsolution $\tilde{v}\in\mathcal{S}_{\varphi}$
such that $\tilde{v}|_{\partial_{\infty}B}=C,$ where now $C=\varphi
(x_{0})-\varepsilon$ so that
\[
\liminf_{x\rightarrow x_{0}}u(x)\geq\varphi(x_{0})-\varepsilon.
\]
Since $\varepsilon>0$ is arbitrary we obtain that $\lim_{x\rightarrow x_{0}%
}u(x)=\varphi(x_{0}),$ concluding the proof when condition \ref{propphi2} is satisfied.

If, instead of condition \ref{propphi2}, we assume $\phi(r)\leq (n-1)\coth(r),$ we obtain the following a priori bound for a solution $u$ to the \eqref{eq-asymDP}. Let $o\in \Hn$ be the point in condition \ref{propphi1}. Let $v:[0,\infty) \to \R$ be the solution of the following ODE
\begin{equation}
\begin{cases}
Q(v\circ r)= \phi(r)\\
v'(0)=0\\
v(\infty) = M,
\end{cases}  
\end{equation}

where $M=\displaystyle{\sup_{\partial_\infty \Hn}\varphi}.$ Then
$$v(r)=\int_r^{+\infty} \frac{\tilde{\rho}(t)}{\sqrt{1-(\tilde{\rho}(t))^2}}dt+M$$ for $$\tilde{\rho}(t)=\frac{1}{\sinh^{n-1}(r)}\int_0^r\phi(s)\sinh^{n-1}(s)ds.$$ 
Observe that condition \ref{propphi1} and $\phi(r) \le (n-1)\coth r$ imply that $\sup |\tilde{\rho}(t)| < 1$. Hence  
the fact that $v$ is well defined follows from the integrability of $\tilde{\rho}$, that  is proved as Remark \ref{rmk-rhoint}. Therefore we have an upper barrier for the Dirichlet problem \eqref{eq-asymDP}.

We conclude the proof by modifying our function $f$ in \eqref{eq-asymDP} to a new function $\hat{f}\in C^2(\Hn\times \R)$ satisfying:
$$\hat{f}(x,t)=\begin{cases} f(x,t) \text{ if }t\leq v(0)\\
                           g(x,t)\text{ if }t\geq v(0)+1\end{cases}$$
where $g$ satisfies conditions \ref{propphi1} and \ref{propphi2}. Hence, from the previous case, there is $\hat{u}$ solution to the \eqref{eq-asymDP} with $\hat{f}.$ Nevertheless, $\hat{u}$ also satisfies \eqref{eq-asymDP} for $f$ since $\hat{u}$ is bounded by $v\circ r$ and therefore by $v(0).$

\end{proof}

\section{Removable asymptotic singularities}

In this section, we show that there is no isolated singularity on the asymptotic boundary for the solution to \eqref{eq-asymDP}. For that, we study a Dirichlet problem similar to the one studied in section \ref{The_asymptotic_Dirichlet_problem_subsection}, in which we relax the boundary condition: 
\begin{equation}%
\begin{cases}
Q(v)  = f(x,v)\text{ in } \mathbb{H}^n \\
v =\varphi\text{ on }\partial_{\infty} \mathbb{H}^n \backslash \{p_1, \dots, p_k\},
\end{cases}
\label{mingrapheqWithSing}%
\end{equation}
where $\varphi\in
C^{0}(\partial_{\infty}\mathbb{H}^n)$ is a given function, $p_i \in \partial_{\infty} \mathbb{H}^n$ and $f \in C^1(\mathbb{H}^n \times \mathbb{R})$ satisfies conditions \ref{propphi1}, \ref{propphi2} and $f_t(x,t)\le 0$  in $\Hn\times\mathbb{R}$. Using the Scherk type solutions and following the same idea as in \cite{BR}, we prove that a solution to this problem can be extended continuously to
the points $p_i$, that is, such a solution satisfies $v =\varphi$ on $\partial_{\infty} \mathbb{H}^n$.

However, since our Scherk type solutions are not isometric, in contrast with the Scherk solutions used in \cite{BR}, we need some auxiliary results to prove that the solutions are bounded.  For that, remind that $\psi=\psi_S$, defined in Proposition \ref{prop-Psi},  satisfies
$$ \psi_S(d,t) = \Psi ( d - d_S(o),t).$$
where 
$$\Psi(z,t) = \left\{ \begin{array}{rr}  \sqrt{ \phi(|z|) h(t)} & {\rm if} \quad t \ge 0 \\[5pt]
                                         \sqrt{ \phi(|z|) h(0)} & {\rm if} \quad t < 0
\end{array} \right. $$
 and $d(x)=d_S(x)$ is the signed distance function to $S$. Observe that for any totally geodesic hypersphere $S_0$ that contains the point $o$, we have $d_{S_0}(o)=0$ and, therefore, $\Psi(d,t)=\psi_{S_0}(d,t)$. Then, from Proposition \ref{prop-Psi}, $\Psi$ satisfies the conditions (i)-(vi). Indeed, we have:

\begin{lem}
Let $o \in \mathbb{H}^n$ as stated in  \ref{propphi1}. Then there exists a nonnegative $C^1$ function $\Psi: \mathbb{R} \times \mathbb{R} \to \mathbb{R}$ that satisfies \ref{psiDefinitiona} 
-\ref{psiDefinitiong} from Proposition \ref{prop-Psi} and such that
for any totally geodesic hypersphere $S$,
\begin{equation} 
 |f(x,t)| \le \Psi(d_S(x)-d_S(o), t) \text{ for }x\in \mathbb{H}^n \text{ and }  t \ge 0.
\label{PsiBoundsfxt}
\end{equation}
\end{lem}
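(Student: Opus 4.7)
The plan is to take the explicit function hinted at in the paragraph preceding the statement and to verify that a single $\Psi$ works uniformly over all totally geodesic hyperspheres $S$. As in Proposition \ref{prop-Psi}, I would first assume without loss of generality that $\phi$ and $h$ are $C^{1}$ with $\phi'(0)=0$, and that $h$ is even and decreasing on $[0,+\infty)$. Then I would set
\[
\Psi(z,t) := \begin{cases} \sqrt{\phi(|z|)\,h(t)} & \text{if } t \geq 0, \\[3pt] \sqrt{\phi(|z|)\,h(0)} & \text{if } t < 0. \end{cases}
\]
Note that this $\Psi$ is the function $\psi_{S_{0}}$ of Proposition \ref{prop-Psi} for any hypersphere $S_{0}$ passing through $o$ (so that $d_{S_{0}}(o)=0$), which is why the general-$S$ version ought to come for free by a translation argument.

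Next, I would check that conditions \ref{psiDefinitionb}--\ref{psiDefinitiong} of Proposition \ref{prop-Psi} hold for $\Psi$. These are immediate consequences of the corresponding properties of $\phi$ and $h$: boundedness of $\Psi$ and its partial derivatives from the $C^{1}$ hypotheses; monotonicity in $t$ (decreasing for $t\geq 0$, constant for $t\leq 0$) from the analogous behavior of $h$; the monotonicity profile in $z$ from $\phi$ being decreasing on $[0,\infty)$ with $\phi'(0)=0$, so the threshold $\tilde{d}$ of \ref{psiDefinitiond} can be taken to be $0$; uniform decay at infinity in $z$ and in $t$ from $\phi(r)\to 0$ and $h(t)\to 0$; and integrability in $z$ on $[0,+\infty)$ from the hypothesis $\int_{0}^{\infty}\sqrt{\phi(r)}\,dr<\infty$ in \ref{propphi1}.

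The only item that requires a new argument is the uniform bound \eqref{PsiBoundsfxt}, and the key observation is that for every totally geodesic hypersphere $S\subset \mathbb{H}^{n}$ the signed distance function $d_{S}$ is $1$-Lipschitz, hence
\[
|d_{S}(x)-d_{S}(o)| \leq d(x,o)=r(x) \qquad \text{for all } x\in\mathbb{H}^{n}.
\]
Since $\phi$ is decreasing on $[0,\infty)$, this gives $\phi(|d_{S}(x)-d_{S}(o)|)\geq \phi(r(x))$. Combining $|f(x,t)|\leq \phi(r(x))$ from \ref{propphi1} with $|f(x,t)|\leq h(t)$ from \ref{propphi2} then yields, for $t\geq 0$,
\[
|f(x,t)|^{2}\leq \phi(r(x))\,h(t)\leq \phi(|d_{S}(x)-d_{S}(o)|)\,h(t)=\Psi(d_{S}(x)-d_{S}(o),t)^{2},
\]
which is the desired inequality. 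I do not expect any genuine obstacle: the content of the lemma is essentially the observation that in Proposition \ref{prop-Psi} the dependence of $\psi_{S}$ on $S$ entered only through the additive shift $d_{S}(o)$, so it can be absorbed into a single $\Psi$ of the translated variable $z=d_{S}(x)-d_{S}(o)$, and the $1$-Lipschitz property of $d_{S}$ is what makes the resulting bound uniform in $S$.
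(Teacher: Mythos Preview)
Your proposal is correct and follows essentially the same route as the paper: the paper defines $\Psi$ by the same formula, observes that $\Psi=\psi_{S_0}$ for any totally geodesic hypersphere $S_0$ through $o$ so that properties \ref{psiDefinitiona}--\ref{psiDefinitiong} are inherited from Proposition~\ref{prop-Psi}, and the uniform bound \eqref{PsiBoundsfxt} is obtained exactly via the triangle-inequality estimate $r(x)\ge |d_S(x)-d_S(o)|$ combined with \ref{propphi1} and \ref{propphi2}. Your phrasing of this last step as the $1$-Lipschitz property of $d_S$ is just a restatement of the same triangle-inequality argument used in the proof of Proposition~\ref{prop-Psi}.
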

\noindent 

\begin{rem}
 Remind that for any totally geodesic hypersphere $S$ and any $c\in \R$, according to Corollary \ref{MainTheorem}, there exists a solution $w_{S,c}(d)$ of \eqref{superScherkEDO} with $\psi$ replaced by $\psi_S$.
Defining 
$$g_{S,c}=\frac{w_{S,c}'}{\sqrt{1+(w_{S,c}')^2}},$$
we have that $(w_{S,c},g_{S,c})$ satisfies \eqref{systemForWG} with $\psi$ replaced by $\psi_S$. 
Now we present a result of some uniform bound of $w_{S.c}$.
\label{SolutionOfScherkWithPsiS}
\end{rem}

\begin{lem}
There exist $c_0 >0$ and $d_1>0$ with the following property: if $c \ge c_0$, there exists a constant $M >c$ that depends only on $c$, $n$, and $\Psi$, such that, for any totally geodesic hypersphere $S$, 
$$ w_{S,c}(d) \le M \quad {\rm for} \quad d \ge d_1. $$
\label{AuxiliarLemmaForUniformBoundedness2}
\end{lem}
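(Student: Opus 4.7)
The plan is to derive a uniform-in-$S$ bound on $|g_{S,c}(d)|$ by using the universal initial condition $g_{S,c}(0^+)=-1$ from Theorem~\ref{g-Gamma0IsSol}, and then to control $w_{S,c}(d)-c$ by direct integration. The key idea is to avoid any reference to $S$-dependent quantities such as the auxiliary point $d_0$ of Lemma~\ref{gIsBoundedByBelowForDLarge} or the shift $d_S(o)$.

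Since $w_{S,c}$ is decreasing with $w_{S,c}(+\infty)=c$, we have $w_{S,c}(s)\ge c$; assuming $c\ge 0$, Proposition~\ref{prop-Psi}\ref{psiDefinitionc} combined with $\psi_S(d,t)=\Psi(d-d_S(o),t)$ gives
\[
\psi_S(s,w_{S,c}(s))\le \Psi(s-d_S(o),c)\le \sqrt{\phi(|s-d_S(o)|)\,h(c)}.
\]
From $(g_{S,c}\cosh^{n-1})'=-\cosh^{n-1}\psi_S(\cdot,w_{S,c})$, $g_{S,c}<0$ on $(0,+\infty)$ (Corollary~\ref{LimitOfg-gamma}\ref{cor-d}), and $\lim_{d\to 0^+}g_{S,c}(d)\cosh^{n-1}(d)=-1$ (Theorem~\ref{g-Gamma0IsSol}), integrating on $[0^+,d]$ yields
\[
|g_{S,c}(d)|\cosh^{n-1}(d)=1+\int_0^d\cosh^{n-1}(s)\,\psi_S(s,w_{S,c}(s))\,ds.
\]
Using $\cosh^{n-1}(s)/\cosh^{n-1}(d)\le 2^{n-1}e^{-(n-1)(d-s)}$ for $0\le s\le d$ together with $\sqrt{\phi}\le\sqrt{\phi(0)}$, this yields
\[
|g_{S,c}(d)|\le \frac{1}{\cosh^{n-1}(d)}+\frac{2^{n-1}\sqrt{\phi(0)}}{n-1}\sqrt{h(c)}.
\]
Because $h(c)\to 0$ as $c\to+\infty$, choose $c_0$ so that $\tfrac{2^{n-1}\sqrt{\phi(0)}}{n-1}\sqrt{h(c_0)}\le 1/4$ and $d_1$ so that $\cosh^{-(n-1)}(d_1)\le 1/4$. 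Then $|g_{S,c}(d)|\le 1/2$ whenever $c\ge c_0$ and $d\ge d_1$, uniformly in $S$.

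Next, from $w_{S,c}'=g_{S,c}/\sqrt{1-g_{S,c}^2}$, $g_{S,c}<0$ and $w_{S,c}(+\infty)=c$, I obtain for $d\ge d_1$
\[
w_{S,c}(d)-c=\int_d^{+\infty}\frac{|g_{S,c}(t)|}{\sqrt{1-g_{S,c}^2(t)}}\,dt\le \frac{2}{\sqrt{3}}\int_{d_1}^{+\infty}|g_{S,c}(t)|\,dt.
\]
Setting $F(s)=\int_s^{+\infty}\cosh^{-(n-1)}(t)\,dt$, inserting the formula for $|g_{S,c}|\cosh^{n-1}$ and using Fubini,
\[
\int_{d_1}^{+\infty}|g_{S,c}(t)|\,dt=F(d_1)+\int_0^{+\infty}\cosh^{n-1}(s)\,\psi_S(s,w_{S,c}(s))\,F(\max(s,d_1))\,ds.
\]
The uniform bound $\cosh^{n-1}(s)F(s)\le \tfrac{2^{n-1}}{n-1}$, combined with the translation-invariant $L^1$-estimate
\[
\int_0^{+\infty}\sqrt{\phi(|s-d_S(o)|)}\,ds\le 2\int_0^{+\infty}\sqrt{\phi(r)}\,dr<+\infty,
\]
which holds for every $d_S(o)\in\mathbb{R}$ by \ref{propphi1}, bounds the right-hand side by a constant depending only on $c$, $n$ and $\Psi$. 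Taking $M$ equal to $c$ plus this constant yields the statement.

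The main obstacle is producing estimates that are uniform in $S$. The standard estimates of Section~\ref{sec-STSolutions} are anchored at a reference point $d_0$ that depends on $S$ through $\tfrac12(n-1)\tanh(d_0)>\psi_S(d_0,0)$ and through the monotonicity parameter $\tilde d=d_S(o)$; since $d_S(o)$ is unbounded as $S$ varies, these cannot be invoked directly. The argument above circumvents this by starting the ODE integration at the universal point $d=0^+$, where $g_{S,c}\cosh^{n-1}$ has the $S$-independent value $-1$, and by exploiting that $\sqrt{\phi(|\cdot|)}\in L^1(\mathbb{R})$ so that integrals over arbitrary translates of $\sqrt\phi$ are uniformly controlled.
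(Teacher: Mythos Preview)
Your argument is correct and follows essentially the same route as the paper's proof: both anchor the integral representation of $g_{S,c}$ at $d=0^+$, where $g_{S,c}\cosh^{n-1}$ takes the $S$-independent value $-1$ (Theorem~\ref{g-Gamma0IsSol}), use the smallness of $\psi_S(\cdot,c)$ for large $c$ to force $|g_{S,c}|\le 1/2$ on $[d_1,\infty)$, and then control $\int_{d_1}^\infty|g_{S,c}|$ via the translation-invariant $L^1$ bound $\int_{-\infty}^{+\infty}\Psi(z,0)\,dz<\infty$ (equivalently $\int_0^\infty\sqrt{\phi}<\infty$). The only cosmetic differences are that the paper phrases the integral bound through the auxiliary function $\rho$ and Remark~\ref{rmk-rhoint}, while you carry out the Fubini step directly with $F(s)=\int_s^\infty\cosh^{-(n-1)}$, and that you keep the factor $\sqrt{h(c)}$ throughout whereas the paper reverts to $\psi_S(\cdot,0)$ for the final integral estimate.
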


\begin{proof}
Let $S$ be a totally geodesic hypersphere and $c\ge 0$. Remind, from Theorem \ref{g-Gamma0IsSol}, that
$$ \lim_{d \to 0}g_{S,c}(d) = -1.$$
Using this, \eqref{systemForWG} and $w_S(+\infty)=c$, we conclude that
\begin{equation}
w_{S,c}(d) = c - \int_{d}^{+\infty} \frac{g_{S,c}(t)}{\sqrt{1-g_{S,c
}^{2}(t)}}\; dt \label{wExpressionByGFinal}%
\end{equation}
and
\begin{equation}
g_{S,c}(d) = \frac{1}{\cosh^{n-1} (d)} \left( -1 -
\int_{0}^{d} \psi_S(s,w_{S,c}(s)) \cosh^{n-1} (s) ds \right)  .
\label{gExpressionByWFinal}%
\end{equation}
Observe now that, from Corollary \ref{MainTheorem}, $w_{S,c}$ is decreasing and, therefore, $w_{S,c}(d) \ge c$. Hence, using \eqref{gExpressionByWFinal} and that $\psi_{S}(d,t)$ is nonincreasing in the  variable $t$, we get
\begin{equation}
g_{S,c}(d) \ge \frac{1}{\cosh^{n-1} (d)} \left( -1 -
\int_{0}^{d} \psi_S(s,c) \cosh^{n-1} (s) ds \right).
\label{LowerBoundForGSc}
\end{equation}
From \ref{psiDefinitionef}, there exists $c_0 > 0$ such that
$$ \psi_{S}(d,t) = \Psi(d-d_S(o),t) \le \Psi(0,t) \le \frac{1}{2^{n+1}} $$
for any $d \in \mathbb{R}$ and $t \ge c_0$.
This $c_0$ does not depend on $S$, but only on $\phi(0)$ and $h$. Hence, if $c \ge c_0$, inequality \eqref{LowerBoundForGSc} implies that
$$ g_{S,c}(d) \ge \frac{1}{\cosh^{n-1}(d)} \left( -1 -
\int_{0}^{d} \frac{1}{2^{n+1}} \cosh^{n-1}(s) ds \right). $$
Therefore, using that $\cosh^{n-1} (d) > 4$ for $d \ge 4$ and 
\[
\frac{\displaystyle \int_{0}^{d} \cosh ^{n-1}(s) ds}{\cosh ^{n-1}(d)} <
2^{n-1} \text{ for any } d \ge 0,
\]
we conclude that 
$$ g_{S,c}(d) \ge - \frac{1}{2} \text{ for } d \ge 4 \text{ and } c \ge c_0.$$ 
Moreover, $ g_{S,c}(d) < 0$ according to the proof of Corollary \ref{MainTheorem}. Then $|g_{S,c}(d) | \le 1/2$ for $d \ge 4$ and $c \ge c_0$.
From this and \eqref{wExpressionByGFinal},
\begin{equation}
 w_{S,c}(d) \le c + \frac{2}{\sqrt{3}} \int_{d}^{+\infty} |g_{S,c}(t)|\; dt \text{ for }  d \ge 4 \text{ and } c \ge c_0.
\label{UpperBoundForWscForDBiggerThan4}
\end{equation}
Now using again \eqref{gExpressionByWFinal} and that $\psi_S(d,0) \ge \psi_S(d,t)$ for any $d$, we get
$$ |g_{S,c}(d)| < \frac{1}{\cosh^{n-1}( d)} + \rho(d),$$
where $\rho$ is defined in \eqref{eq-defrho} with $d_0=0$ and $\psi$ replaced by $\psi_S$. Then, from \eqref{UpperBoundForWscForDBiggerThan4} and \eqref{boundForIntegralOfRho}, we have
\begin{align*}
w_{S,c}(d) &\le c + \frac{2}{\sqrt{3}} \int_{d}^{+\infty} \left(\frac{1}{\cosh^{n-1}(s)} + \rho(s)\right) \; ds  \\[5pt]
           &\le c + \frac{2}{\sqrt{3}} \int_{0}^{+\infty} \frac{1}{\cosh^{n-1}(s)} \; ds +  \frac{2^n}{\sqrt{3}(n-1)} \int_0^{+\infty} \psi_S(s,0) \; ds, 
\end{align*} 
for $d \ge 4$ and $c \ge c_0$. Since, by the definition of $\Psi$ and \ref{psiDefinitiong},
$$\int_0^{+\infty} \psi_S(s,0) \; ds =\int_{-d_S (o)}^\infty \Psi (s,0) ds \le \int_{-\infty}^{\infty} \Psi (s,0) ds < \infty,$$ 
the proof follows.

\end{proof}

Now we state a comparison principle for some unbounded domains.

\begin{lem}
Let $U$ be a domain in $\mathbb{H}^n$, possibly unbounded. Suppose that $w_1$ and $w_2$ are respectively a sub and a supersolution of $Q(w(x))  =  \psi(d(x),w(x))$ in $U$ such that
$$ \limsup_{x \to p} w_1(x) \le \liminf_{x \to p} w_2(x) \text{ for any }  p \in \partial U \cup \partial_{\infty} U$$
and,  for some $A < \infty$,
 we have $\limsup_{x \to q} w_1(x) \le A$ for any $q \in \partial U \cup \partial_{\infty} U$.
Then, $w_1 \le w_2$ in $U$.
\label{comparison-principle-for-unbounded}
\end{lem}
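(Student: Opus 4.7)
\medskip

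\noindent The strategy is to reduce to the classical comparison principle on a bounded subdomain, using the compactness of the extended boundary $\partial U \cup \partial_\infty U$ inside $\overline{\mathbb{H}^n} = \mathbb{H}^n \cup \partial_\infty \mathbb{H}^n$. Specifically, given $\varepsilon > 0$ I plan to produce an open neighborhood $N \supset \partial U \cup \partial_\infty U$ in $\overline{\mathbb{H}^n}$ on which $w_1 - w_2 \le \varepsilon$, and then compare $w_1$ with the shifted supersolution $w_2 + \varepsilon$ on the relatively compact complement $V := U \setminus \overline{N}$.

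\medskip

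\noindent The construction of $N$ proceeds pointwise: at each $p \in \partial U \cup \partial_\infty U$ set $M_p := \limsup_{x\to p} w_1(x)$ and $m_p := \liminf_{x\to p} w_2(x)$, so by hypothesis $M_p \le m_p$ and $M_p \le A$. When $M_p$ is finite (or when $m_p = +\infty$, in which case $w_1$ stays bounded by $A$ while $w_2 \to +\infty$), a standard $\varepsilon/2$-neighborhood argument produces an open $N_p$ with $w_1(x) \le M_p + \varepsilon/2$ and $w_2(y) \ge m_p - \varepsilon/2$ for all $x,y \in N_p \cap U$, so in particular $w_1 - w_2 \le \varepsilon$ there. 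In the degenerate case $M_p = m_p = -\infty$, i.e., both functions diverge to $-\infty$ at $p$, one shrinks $N_p$ so that $w_1, w_2 < 0$ throughout; since $\psi(d,t) = \psi(d,0)$ for $t \le 0$, the equation becomes locally $Q(v) = \psi(d,0)$ with no $u$-dependence, and the bound $w_1 - w_2 \le \varepsilon$ follows from the standard comparison for such an equation. A finite subcover yields $N = \bigcup_{i=1}^k N_{p_i}$ on which $w_1 - w_2 \le \varepsilon$, and by continuity also on $\overline{N} \cap U$.

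\medskip

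\noindent The complement $V := U \setminus \overline{N}$ is open and disjoint from $\partial U \cup \partial_\infty U$, so its closure in $\overline{\mathbb{H}^n}$ is a compact subset of $U$; in particular $V$ is a bounded open subset of $\mathbb{H}^n$ with $\partial V \subset \overline{N} \cap U$, where $w_1 \le w_2 + \varepsilon$ already holds. Since $\psi(d, \cdot)$ is nonincreasing, $w_2 + \varepsilon$ remains a supersolution: indeed $Q(w_2+\varepsilon) = Q(w_2) \ge \psi(d,w_2) \ge \psi(d, w_2+\varepsilon)$. The classical comparison principle on the bounded set $V$ (whose proof for $Q(v) = f(x,v)$ with $f$ nonincreasing in $v$ is standard and underlies the Perron construction used throughout the paper) then gives $w_1 \le w_2 + \varepsilon$ in $V$, hence $w_1 \le w_2 + \varepsilon$ in all of $U$. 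Letting $\varepsilon \to 0$ concludes the proof. The main technical point is the local bound $w_1 - w_2 \le \varepsilon$ near $p$ in the degenerate case $M_p = m_p = -\infty$: the monotonicity of $\psi$ alone does not separate $w_1$ from $w_2$ here, and one must exploit the extra structural fact that $\psi$ is independent of $t$ for $t \le 0$ to reduce to an equation free of zeroth-order $u$-dependence, where classical comparison suffices.
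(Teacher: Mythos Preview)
The paper states this lemma without proof, so there is no reference argument to compare against. Your overall strategy---exploit the compactness of $\partial U\cup\partial_\infty U$ in the compactification $\overline{\mathbb{H}^n}$ to trap the set $\{w_1>w_2+\varepsilon\}$ inside a bounded subdomain, and then invoke the classical comparison principle there---is the standard one, and the non-degenerate cases together with the endgame on $V=U\setminus\overline{N}$ are carried out correctly (in particular, $\psi(d,\cdot)$ nonincreasing does make $w_2+\varepsilon$ a supersolution).

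There is, however, a genuine gap in your treatment of the case $M_p=m_p=-\infty$. First, $m_p=\liminf_{x\to p}w_2(x)=-\infty$ does \emph{not} mean that $w_2\to-\infty$ near $p$; the $\limsup$ of $w_2$ at $p$ could be anything, so one cannot in general shrink $N_p$ to force $w_2<0$ throughout $N_p\cap U$. Second, and more seriously, even on the subset of $N_p\cap U$ where both $w_1,w_2<0$ and the equation degenerates to $Q(v)=\psi(d,0)$, no boundary information is available at $p$, so the appeal to ``standard comparison'' is empty: you have not specified a domain with controlled boundary values on which a comparison could be run, and the conclusion $w_1-w_2\le\varepsilon$ on $N_p\cap U$ simply does not follow from what you have written.

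That said, in every application of this lemma in the paper the supersolution $w_2$ is a Scherk-type barrier satisfying $w_2\ge c$ for some finite constant $c$ (see Corollary~\ref{MainTheorem}), so $m_p\ge c>-\infty$ at every boundary point and the degenerate case never occurs. Your argument is therefore adequate for the paper's purposes. If you want the lemma in the generality stated, the cleanest fix is to add the hypothesis that $\liminf_{x\to p}w_2(x)>-\infty$ for every $p\in\partial U\cup\partial_\infty U$ (which is what the applications actually provide), or else to supply a real argument for the degenerate case.
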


Now making some adjustments in the proof of Theorem 1.1 of \cite{BR}, we obtain:
\begin{prop}
If $u \in C^2(\mathbb{H}^n) \cap C^0(\overline{\mathbb{H}^n} \backslash \{p_1, \dots, p_k \})$ is a solution of \eqref{mingrapheqWithSing}, then $u$ is bounded.
\label{boundednessOfSolutionWithSing}
\end{prop}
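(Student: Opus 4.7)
The plan is to obtain a uniform bound on $u$ in a hyperbolic neighborhood of each asymptotic singular point $p_i$; away from the $p_i$'s, $u$ is continuous on a compact subset of $\overline{\Hn}$ and is therefore automatically bounded. So the problem reduces to a local estimate at each $p_i$.

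Fix $i$ and choose a totally geodesic hyperball $B\subset\Hn$ such that $p_i$ lies in the interior of $\partial_\infty\overline{B}$, no other $p_j$ lies in $\partial_\infty\overline{B}$, and $\overline{S_B}$ is disjoint from $\{p_1,\dots,p_k\}$ altogether (here $S_B=\partial B$). Since $\overline{S_B}$ is compact and avoids the singular set, $u$ is bounded on it by some constant $A$. With $c\ge\max\{c_0,A,\sup_{\partial_\infty\Hn}|\varphi|\}+1$ (where $c_0$ is from Lemma \ref{AuxiliarLemmaForUniformBoundedness2}), I would invoke Corollary \ref{MainTheorem} to produce the Scherk-type supersolution $w:=w_{S_B,c}$ on $B$. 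Lemma \ref{AuxiliarLemmaForUniformBoundedness2} then gives $w\le M$ on the region $\{x\in B:\dist(x,S_B)\ge d_1\}$, which is a neighborhood of $p_i$ inside $B$.

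It would suffice to establish $u\le w$ on $B$, for then $u\le M$ near $p_i$. The boundary inequality is immediate on $S_B$ (where $w=+\infty$) and on $\partial_\infty B\setminus\{p_i\}$ (where $u=\varphi\le c=w$); the value of $u$ at $p_i$ is, however, not controlled a priori and this is where the main obstacle lies. Following the approach of \cite{BR}, I would introduce a shrinking sequence of hyperballs $B_n\subset B$ with $\partial_\infty\overline{B_n}\searrow\{p_i\}$ and apply the comparison principle for unbounded domains, Lemma \ref{comparison-principle-for-unbounded}, between $u$ and $w+K_n$ on the annular region $U_n:=B\setminus\overline{B_n}$, where $K_n:=\sup_{\overline{S_{B_n}}}u<\infty$ is finite since $\overline{S_{B_n}}$ is compact and, for $n$ large, disjoint from all $p_j$. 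This yields $u\le w+K_n$ on $U_n$, and in particular $u\le M+K_n$ on $U_n\cap\{\dist(\cdot,S_B)\ge d_1\}$.

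The delicate step is to show that $\sup_n K_n<\infty$. This is precisely where Lemma \ref{AuxiliarLemmaForUniformBoundedness2} plays its essential role, substituting for the hyperbolic isometry invariance that makes Scherk barriers in \cite{BR} isometric: because the constants $M$ and $d_1$ produced by the lemma are \emph{uniform} over all totally geodesic hyperspheres, one can run the same Scherk comparison on each $B_n$ and feed the resulting estimate back into the definition of $K_n$ to close the iteration. Once $\sup_n K_n<\infty$ is secured, letting $n\to\infty$ gives $u\le w+K$ on $B$ for some finite $K$, and in particular $u\le M+K$ in a neighborhood of $p_i$. A symmetric argument using the Scherk-type subsolutions provided by Theorem \ref{s} produces the matching lower bound. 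Repeating for each of the finitely many $p_i$'s and combining with the continuity estimate in the complement gives $u\in L^\infty(\Hn)$, completing the proof.
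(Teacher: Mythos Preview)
Your plan has a genuine gap at precisely the step you flag as delicate. Since $K_n=\sup_{\overline{S_{B_n}}}u$ and $S_{B_n}\to p_i$, a uniform bound on the $K_n$ \emph{is} a bound on $u$ near $p_i$, which is what you are trying to prove. The suggestion to ``run the same Scherk comparison on each $B_n$ and feed the resulting estimate back'' does not break this circle: replacing $B$ by $B_n$ in your construction still leaves the singular point $p_i$ in $\partial_\infty B_n$, so Lemma \ref{comparison-principle-for-unbounded} cannot be applied on $B_n$ without already controlling $\limsup_{x\to p_i}u(x)$; and if instead you again excise a smaller $B_{n'}\subset B_n$, you acquire a new uncontrolled constant $K_{n'}$. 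The uniformity in Lemma \ref{AuxiliarLemmaForUniformBoundedness2} bounds the barrier $w$, not the additive shift $K_n$ in $u\le w+K_n$.

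The missing idea, and the content of the paper's proof, is to place the $+\infty$ side of the Scherk barrier on a hypersphere \emph{near} $p_i$ rather than on the fixed $S_B$. Take totally geodesic hyperspheres $S_m\to p_i$, let $Y_m$ be the component of $\Hn\setminus S_m$ whose asymptotic boundary does \emph{not} contain $p_i$, and compare $u$ with $w_{S_m,c}$ on $Y_m\cap B$. On $S_m$ the barrier equals $+\infty$, so no information on $u$ near $p_i$ is required; on $S_B$ and on $\partial_\infty(Y_m\cap B)$ (which now avoids $p_i$) one has $u\le c\le w_{S_m,c}$ by the choice of $c$. Lemma \ref{comparison-principle-for-unbounded} then gives $u\le w_{S_m,c}$ in $Y_m\cap B$ directly, with no additive constant. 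Now Lemma \ref{AuxiliarLemmaForUniformBoundedness2} finishes exactly as you intended: for each fixed $z\in B$ and $m$ large one has $d_{S_m}(z)>d_1$, whence $u(z)\le w_{S_m,c}(d_{S_m}(z))\le M$. Your overall architecture (localize at each $p_i$, bound on the complement by continuity, invoke the uniformity of Lemma \ref{AuxiliarLemmaForUniformBoundedness2}) is correct; only the orientation of the Scherk barrier needs to be reversed.
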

\begin{proof}
For each $p_i$ let $B_i$ be a totally geodesic hyperball such that $p_j\in \partial_\infty B_i$ if and only if $i=j.$ We can suppose $p_i \in \text{int}\; \partial_{\infty} B_i$ and we denote by $H_i$ the hypersphere that bounds $B_i.$

Since $p_j \not\in \partial_{\infty}H_i$ for any $j \in \{1, \dots, k\}$, $u$ is continuous in $H_i \cup \partial_{\infty} H_i$ and, therefore, is bounded on $H_i$.
Then set
$$ c_1 =\max\{c_0, \sup_{\partial_{\infty} \mathbb{H}^n} \varphi, \sup_{H_1} u, \dots, \sup_{H_k} u\}, $$
where $c_0$ is given in Lemma \ref{AuxiliarLemmaForUniformBoundedness2}. Since $c_1 \ge c_0$, we deduce from Lemma \ref{AuxiliarLemmaForUniformBoundedness2} that there exist $M > c_1$ and $d_1>0$ such that 
\begin{equation}
w_{H,c_1}(d) \le M \text{ for } d \ge d_1,
\label{UniformBoundedNessForSolutions}
\end{equation}
for any totally geodesic hypersphere $H$. Let $H^*$ be some totally geodesic hypersphere contained in $B_1$ such that $dist(H^*,\partial B_1) > d_1$. Hence,\\  $dist(H^*,\mathbb{H}^n \backslash B_1 \cup \dots \cup B_k) > d_1$ and, therefore \eqref{UniformBoundedNessForSolutions} implies that 
$$ w_{H^*,c_1}(d_{H^*}(x)) \le M \text{ for } x \in \mathbb{H}^n \backslash B_1 \cup \dots \cup B_k. $$
Using that $u \le c_1$ on the boundary and asymptotic boundary of $\mathbb{H}^n \backslash B_1 \cup \dots \cup B_k$, the comparison principle (Lemma \ref{comparison-principle-for-unbounded}) and $w_{H^*,c_1} \ge c_1$, we conclude that 
\begin{equation}
u \le w_{H^*,c_1} \le M \text{ in } \mathbb{H}^n \backslash B_1 \cup \dots \cup B_k.
\label{uIsSmallerThanC1}
\end{equation}
 
Now we prove that $u \le M$ in $B_i$. For that, take a sequence of totally geodesic hyperspheres $S_m$ that converges to $p_i$. 
Let $Y_m$ be the connected component of $\mathbb{H}^n \backslash S_m$ whose asymptotic boundary does not contain $p_i$. Observe that $\mathbb{H}^n \backslash B_i \subset Y_m$ for $m$ large and $\cup_m Y_m = \mathbb{H}^n$. Consider the problem
$$\begin{cases}
Q(v\circ d_{S_m})= \psi_{S_m}(d_{S_m},v\circ d_{S_m}) \text{ in } Y_m\\[5pt]%
v = c_1 \text{ on }\partial_{\infty} Y_m\\[5pt]%
v = + \infty\text{ on } S_m,
\end{cases}
$$
where $\psi_{S_m}$ is defined in Proposition \ref{prop-Psi} and in the beginning of this section. According to Remark \ref{SolutionOfScherkWithPsiS}, this problem has a solution $w_{S_m,c_1}(d_{S_m}(x))$. Moreover, from \eqref{UniformBoundedNessForSolutions}, we get
\begin{equation}
 w_{S_m,c_1}(d) \le M \quad {\rm for } \quad d \ge d_1.
 \label{UniformBoundedNessForSolutionsWSm2}
\end{equation} 
From Corollary \ref{MainTheorem}, $w_{S_m,c_1}(d) > c_1$ for any $d$. Hence, $w_{S_m,c_1} > c_1 \ge u$ on $H_i = \partial B_i$, $w_{S_m,c_1} \ge u$ on $\partial_{\infty} ( Y_m \cap B_i)$ and  $w_{S_m,c_1} = +\infty > u$ on $\partial Y_m$. Therefore, Lemma \ref{comparison-principle-for-unbounded} implies that 
$$u(x)\le w_{S_m,c_1}(d_{S_m}(x)) \text{ in }  Y_m \cap B_i \text{ for }m \text{ large.}$$ 
Let $z \in B_i$. Then $z \in Y_m \cap B_i$ for $m$ sufficiently large, since $\cup Y_m = \mathbb{H}^n$. 
Moreover, using that $S_m$ converges to $p_i$, we have $d_{S_m}(z) > d_1$ for $m$ large. Therefore, from \eqref{UniformBoundedNessForSolutionsWSm2}, we conclude that
$$ w_{S_m,c_1}(d_{S_m}(z)) \le M \text{ for } m \text{ large.}$$
Hence
$u \le M$ in $B_i$. From this and \eqref{uIsSmallerThanC1}, it follows that $u \le M$ in $\mathbb{H}^n$. Analogously, one can show that $u$ is bounded from below. This establishes the proof. 
\end{proof}

The proof of the next result follows the same idea as in Theorem 1.1 of \cite{BR}.

\begin{proof}[Proof of Theorem \ref{ContinuityOfSolutionWithSing}]
For $p \in \{p_1, \dots, p_k\}$, we have to prove that 
$$\lim_{x\to p} u(x) =\varphi(p).$$
Observe that $v:=u-\varphi(p)$ is a solution of $Q(v)=\tilde{f}(x,v)$ in $\mathbb{H}^n$ and $v=\varphi-\varphi(p)$ on $\mathbb{H}^n \backslash \{p_1, \dots, p_k \}$, where $\tilde{f}(x,v):=f(x,v+\varphi(p))$ satisfies \ref{propphi1} and \ref{propphi2}. Then, we can suppose w.l.g. that $\varphi(p)=0$.
That is, we need to show that 
$$\lim_{x\to p} u(x) = 0.$$
Let
$$ K = \limsup_{x \to p} u(x) .$$
According to Proposition \ref{boundednessOfSolutionWithSing}, $u$ is bounded from above by some $M,$ so $K\leq M.$ We show now that, for any $\delta > 0$, we have $K \le \delta$. Suppose by contradiction that $K > \delta$, for some $\delta >0$.
Now consider a decreasing sequence $(V_j)$ of totally geodesic hyperballs ``concentric" at $p$
(that is, $p$ is one of the ending point of a geodesic that cross each $\partial V_j$ orthogonally), 
such that $$\bigcap_j \overline{V}_j = \{p\},\quad  \sup_{V_j} u < K + 1/j \quad \text{ and } \quad \sup_{\partial_\infty V_j}\varphi \le \frac{\delta}{2}. $$
For each $j$, let $\tilde{V}_j \subset V_j$ be a totally geodesic hyperball  concentric with $V_j$ at $p$ such that
$$ dist( \partial \tilde{V}_j, \partial V_j ) \ge j \text{ and } \sup_{x \in \tilde{V}_j} u(x) > K -1/j.$$
Then there exists a sequence $(x_j)$  that satisfies $x_j \in \tilde{V}_j$ and
$$ K - 1/j < u(x_j) < K + 1/j .$$
Denote $A = V_1$ and let $T_j:\mathbb{H}^n \to \mathbb{H}^n$ be an isometry  that preserves $p$,  $T_j(\tilde{V}_j) \supset A$ and $y_j:=T_j(x_j) \in \partial A.$  Since $T_j(V_j)$ and $T_j(\tilde{V}_j)$ are totally geodesic hyperballs and $T_j(V_j) \supsetneq T_j(\tilde{V}_j) \supset A$, we have that $\partial_{\infty}A \subset {\rm int} \; \partial_{\infty} T_j(V_j)$ for any $j$.
Observe that $$u_j= u \circ T^{-1}_j$$
 satisfies
\begin{equation}
\sup_{T_j(V_j)}u_j < K + 1/j \text{ and }  u_j(y_j) > K - 1/j,
\label{limiteSuperiorWm}
\end{equation}
and is a solution of 
$$Q(v(y)) = f(T^{-1}_j(y),v(y)),$$
since $Q$ is invariant under isometries. We have also that 
$T_j(V_j)$ is a totally geodesic hyperball and
$u_j \le \delta/2$ on $ \partial_{\infty} (T_j(V_j)) \backslash \{p\}$
since $u=\varphi \le \delta/2$ on $\partial_{\infty} V_j \backslash \{p\}$.
Moreover, using that $A \subset T_j(V_j)$ and $p \not\in \partial_{\infty}( \mathbb{H}^n \backslash A )$, we have that $\partial_{\infty}A \cap \partial_{\infty}( \mathbb{H}^n \backslash A )\subset \partial_{\infty}T_j(V_j)\backslash \{p\}$. Therefore, $u_j \le \delta/2$ on $\partial_{\infty}A \cap \partial_{\infty}( \mathbb{H}^n \backslash A)$. 

For $q \in \partial_{\infty}A \cap \partial_{\infty}( \mathbb{H}^n \backslash A)$, let $B_q$ be a totally geodesic hyperball  disjoint with $V_2$ such that $q \in {\rm int} \partial_{\infty}B_q$ and $B_q \subset T_j(V_j)$ for any $j$. (This is possible since $(V_j)$ is a decreasing sequence, $\partial T_j(V_j)$ is a totally geodesic hypersphere, $dist(\partial T_j(V_j),A) \ge j$ as in \cite{BR} and, then some neighborhood of $\partial_{\infty}A \subset {\rm int} \, \partial_{\infty} T_j(V_j)$ for any $j$). As in Theorem \ref{thm-Scherk},
consider the supersolutions $w_q$ of 
$$\begin{cases}
  Q(v)(y) = f(T^{-1}_j(y) , v(y))\text{ in } B_q \\
v  = +\infty \text{ on } \partial B_q \\
v = \delta/2 \text{ on } {\rm int} \; \partial_{\infty} B_q.\end{cases} $$
Such a problem is solvable according to Theorem \ref{s}, since $f(T^{-1}_j(y) , t)$ satisfies \ref{propphi1} and \ref{propphi2}. 
Since $u_j \le w_q=\delta/2$ on ${\rm int} \; \partial_{\infty} B_q$, Lemma \ref{comparison-principle-for-unbounded} implies that $u_j \le w_q$ in $B_q$. Let $\tilde{B}_q \subset B_q$ be the hyperball with boundary equidistant to $\partial B_q$, for which $w_q < \delta$ in $\tilde{B}_q$. Hence $u_j < \delta$ in $\tilde{B}_q$ and, therefore, 
\begin{equation}
u_j < \delta \text{ in } \tilde{B}
\label{u-jIsSmallerThanDelta}
\end{equation} for any $j$, where
$$ \tilde{B} = \bigcup_{q \in \partial_{\infty}A \cap \partial_{\infty}( \mathbb{H}^n \backslash A)} \tilde{B}_q .$$
Observe that $\tilde{B}$ is a neighborhood of $\partial_{\infty}A \cap \partial_{\infty}( \mathbb{H}^n \backslash A)$ and $\partial A \backslash \tilde{B}$ is compact.

\

Now we prove that there exists some $w$ defined in $\mathbb{H}^n$ that is the limit of some subsequence $(u_j)$ and is a solution of the minimal surface equation. This function is also not constant and satisfies $\max w = K$ contradicting the maximum principle.

First, remind that we have already noted that $T_j(V_j) \supset A$ and $$dist(\partial T_j(V_j),A) \ge j,$$ which implies that ``$T_j(V_j) \to \mathbb{H}^n$".
This means that any compact set $F \subset \mathbb{H}^n$ is contained in $T_j(V_j)$ for large $j$. Since $|u|$ is bounded by $M$, we have $\sup_F |u_j| \le M$, for large $j$.
In fact, this estimative holds in any neighborhood of $F$.
Hence, from Proposition \ref{lem-gradint},
\begin{equation}
\sup_F |\nabla u_j| \le L,
\label{uniform-boundedness-of-u_j}
\end{equation}
where $L$ is some positive constant that does not depend on $j$.
From Arzel\`a-Ascoli, there is some subsequence of $(u_j)$ that converges uniformly in $F$.  
Taking an increasing sequence of compacts sets $F_m$ such that $\bigcup_m F_m =\mathbb{H}^n$ and applying a diagonal argument, we conclude that there exists some subsequence, that we rename by $u_j$, such that $u_j$ converges uniformly in any compact subset of $\mathbb{H}^n$. Let
$$w(x) = \lim_{j\to \infty} u_j(x).$$

From \eqref{uniform-boundedness-of-u_j}, for any bounded open set $U$, we have also that $u_j$ is uniformly bounded in $C^{1}(\overline{U})$. From the linear eliptic PDE theory, $(u_j)$ admits a converging subsequence in $C^{2,\alpha}$ for some $\alpha\in (0,1).$ Let $w$ denote the limit of this subsequence. Again, using a diagonal argument, we have that some subsequence converges to $w$ in $C^{2,\alpha}(\mathbb{H}^n)$. We can denote this subsequence by $u_j$.

Since ``$T^{-1}_j(F) \to \partial_{\infty}\mathbb{H}^n$" as $j \to \infty$ for any compact $F$, condition \ref{propphi1} implies that $f(T^{-1}_j(y), u_j(y)) \to 0$ uniformly in $F$ as $j \to \infty$. Hence, using \eqref{uniform-boundedness-of-u_j} and that $u_j$ converges 
 to $w$ in $C^2,$ 
 we have that $Q(w)=0$. From the classical theory, the graph of $w$ is a minimal surface. Moreover, from $T_j(V_j) \supset F$ for large $j$ and \eqref{limiteSuperiorWm}, it follows that 
$$\sup_{\mathbb{H}^n} w \le K.$$
Now, remind that $y_j=T_j(x_j) \in \partial A$. From \eqref{limiteSuperiorWm} and \eqref{u-jIsSmallerThanDelta}, we conclude that $y_j \in \partial A \backslash \tilde{B}$ if $1/j < K - \delta$. Since $\partial A \backslash \tilde{B}$ is compact, upon passing to a subsequence, $y_j$ converges to some $y \in \partial A \backslash \tilde{B}$. Hence, from \eqref{limiteSuperiorWm} and the fact that $u_j$ converges to $w$ uniformly in compact sets, we have that $w(y) = K$.
Then $y$ is a maximum point of $w$ and, therefore, by the maximum principle, $w$ is constant. However this contradicts that $w(y)=K$ and $w < \delta < K$ in $\tilde{B}$.  From this, we conclude that $K \le \delta$ for any $\delta >0$ and, therefore, $K=0$.

By a similar argument, we can prove that $\liminf_{x \to p} u(x) =0$, proving that $\lim_{x \to p}w(x)=0=\varphi(p)$.
\end{proof}

\bibliography{biblioBCKRT}
\end{document}